\begin{document}
%\date{\today}

%\usepackage{psfrag}\usepackage{varioref}%

%%%%%%%%%%%%%%%%%%%%%%%%%%%%%%
%\newtheorem{theorem}{Theorem}[subsection]
\newtheorem*{theorema}{Theorem}
\newtheorem*{theoremb}{Theorem B}
\newtheorem{definition}{Definition}[section]
\newtheorem{prop}[definition]{Proposition}
\newtheorem{lemma}[definition]{Lemma}
\newtheorem{sublemma}[definition]{Sublemma}
\newtheorem{remark}[definition]{Remark}
\newtheorem{cor}[definition]{Corollary}
\newtheorem{claim}[definition]{Claim}

%%%%%%%%%%%%%%%%%%%%%%%%%%%%%%%

\author{Samuel Senti* and Hiroki Takahasi**}
%\address{Instituto de Matematica, Universidade Federal do Rio
%de Janeiro, C.P. 68 530, CEP 21941-909, R.J., BRASIL}
%\email{senti@im.ufrj.br}
%\address{FIRST, Aihara Innovative Mathematical Modelling Project,
%Japan Science and Technology Agency; Institute of Industrial Science, Universit%y of Tokyo, Tokyo
%153-8505, JAPAN}
% \email{h{\_}takahasi@sat.t.u-tokyo.ac.jp}
\thanks{*Instituto de Matematica, Universidade Federal do Rio
de Janeiro, C.P. 68 530, CEP 21941-909, R.J., BRASIL, senti@im.ufrj.br;
**Department of Electronic Science and Engineering, Kyoto University, Kyoto
606-8501, JAPAN, takahasi.hiroki.7r@kyoto-u.ac.jp
{\it Address from April 1st 2013}: Department of Mathematics, Keio University, Yokohama 223-8522,
JAPAN, hiroki@math.keio.ac.jp
% \email{h{\_}takahasi@sat.t.u-tokyo.ac.jp}
    {\it 2010 Mathematics Subject Classification.} 
37D25, 37D35, 37G25}

\title[Equilibrium measures
for the H\'enon map at the first bifurcation]
{Equilibrium measures
for the H\'enon map\\ at the first bifurcation}
\begin{abstract}
We study the dynamics of strongly dissipative H\'enon maps, at the first bifurcation parameter where the uniform hyperbolicity is destroyed by the formation of tangencies inside the limit set. We
prove the existence of an equilibrium measure which
minimizes the free energy associated with the non continuous potential $-t\log
J^u$, where $t\in\mathbb R$ is in a certain interval of the form
$(-\infty,t_0)$, $t_0>0$ and $J^u$ denotes the Jacobian in the
unstable direction. \end{abstract}
\maketitle

\section{Introduction}
An important problem in dynamics is to describe how horseshoes are
destroyed. A process of destruction through homoclinic
bifurcations is modeled by the H\'enon family
\footnote{Our arguments and results also hold for 
H\'enon-like families \cite{CLR08,MorVia93}, perturbations of the H\'enon family.}
\begin{equation}\label{henon}
f_{a}\colon(x,y)\mapsto (1-ax^2+\sqrt{b}y, \pm\sqrt{b}x),\ \
0<b\ll1.\end{equation}
For all large $a$, the non-wandering set is a uniformly
hyperbolic horseshoe \cite{DevNit79}. As one decreases $a$, the stable and unstable directions get increasingly confused, and at last reaches a bifurcation parameter $a^*$ near $2$. The non-wandering
set of $f_a$ is a uniformly hyperbolic horseshoe for $a>a^*$, and
$\{f_a\}$ generically unfolds a quadratic tangency at $a=a^*$
\cite{BedSmi04,BedSmi06,CLR08}. According to a general theory of global
bifurcations (for instance, see \cite{PalTak93} and the references
therein), a surprisingly rich array of complicated behaviors
appear in the unfolding of the tangency. In this paper,
instead of unfolding the tangency we study the dynamics of $f_{a^*}$ from a
viewpoint of ergodic theory and thermodynamic formalism.
The
dynamics of $f_{a^*}$ is close to that of the uniformly hyperbolic
horseshoe \cite{BedSmi04,CLR08,Hoe08,Tak12}, yet already exhibits some
complexities shared by those $f_a$, $a<a^*$, and thus will provide
an important insight into the bifurcation at $a^*$.

Another motivation for the study of $f_{a^*}$ is to develop an
ergodic theory for \emph{non-attracting sets which are
not uniformly hyperbolic}. In the rigorous study of dynamical
systems, a great deal of effort has been devoted to the study of
chaotic attractors. A statistical approach has been often taken,
i.e., to look for nice invariant probability measures which
statistically predict the asymptotic ``fate" of positive Lebesgue
measure sets of initial conditions. The non-wandering set of
$f_{a^*}$ behaves like a saddle, in that many orbits wander
around it for a while due to its invariance, and eventually leave a
neighborhood of it \cite{Tak12}. Such
non-attracting sets may be considered somewhat
irrelevant, as they only concern transient behaviors. Although
this point of view is justified by a wide variety of reasons, the
study of non-attracting sets deserves our attention, because
of their nontrivial influences on global dynamics. Moreover, important thermodynamic parameters relevant in this context, such as 
the Hausdorff dimension and
escape rates, are not well-understood unless the uniform hyperbolicity is assumed.

We state our setting and goal in more precise terms.
Write $f$ for $f_{a^*}$ and let $\Omega$ denote the non-wandering set of $f$.
This set is closed, bounded, and so is a compact set. Let $\mathcal M(f)$ denote the space of all
$f$-invariant Borel probability measures endowed with the topology of weak convergence. For a given potential
$\varphi: \Omega\to \mathbb R$ (the minus of) the free energy function
$F_\varphi\colon\mathcal M(f)\to\mathbb R$ is given by
$$F_\varphi(\mu)= h(\mu)+\mu(\varphi),$$ where $h(\mu)$
denotes the entropy of $\mu$ and $\mu(\varphi)=\int\varphi
d\mu$. An \emph{equilibrium measure} for the potential
$\varphi$ is a measure $\mu_\varphi\in\mathcal M(f)$ which maximizes $F_\varphi$, i.e.
$$F_\varphi(\mu_\varphi)=
\sup\left\{F_\varphi(\mu)\colon\mu\in\mathcal M(f)\right\}.$$

The existence and uniqueness of equilibrium measures depend upon the
characteristics of the system and the potential.
In our setting, the entropy map is upper semi-continuous (Corollary \ref{USC})
and so equilibrium measures exist for any continuous potential,
and they are unique for a dense subset of continuous potentials \cite[Corollary 9.15.1]{Wal82}.
However the most significant potentials often lack continuity and the above results do not apply, as is the case of the 
potential we are now going to introduce.

At a point $z\in \mathbb R^2$, let $E^u(z)$ denote the one-dimensional subspace such that \begin{equation}\label{eu}
\varlimsup_{n\to\infty}\frac{1}{n}\log\|Df^{-n}|E^u(z)\|<0.\end{equation}
Since $f^{-1}$ expands area, $E^u(z)$ is unique when it makes sense. We call $E^u$ an {\it unstable direction}.
%We show that $E^u$ is defined everywhere on $K$, is Borel measurable and non continuous at the fixed saddle near $(-1,0)$ (See Proposition \ref{measurable}).
Denote the Jacobian in the unstable direction by 
$$J^u(z):=\Vert Df|E^u(z)\Vert.$$
The \emph{geometric} potential is then given by $$\varphi_t:=-t\log J^u,\ t\in\mathbb R.$$ Due to the presence of the tangency, $\varphi_t$ is merely bounded measurable and not continuous.
Our goal is to prove the existence of equilibrium measures for $\varphi_t$ with $t$ in a certain interval containing all negative $t$ and some (many) positive $t$.

The (non-uniform) expansion along the unstable direction is responsible for the chaotic behavior. Therefore, information on the dynamics of $f$ as well as the geometry of $\Omega$ is obtained by studying equilibrium measures for
the geometric potentials $\varphi_t$ and the associated {\it pressure function} $t\in\mathbb R\mapsto P(t)$, where
$$P(t):=\sup\{F_{\varphi_t}(\mu)\colon
\mu\in\mathcal M(f)\}.$$
For instance, SRB measures when they exist should be equilibrium measures for $\varphi_1$. Those for $\varphi_0$ are the measures of maximal entropy. 
%The {\it pressure function} $t\mapsto P(t)$ is convex, and thus continuous. 
In addition,
analogously to the case of basic sets of $C^2$ surface diffeomorphisms \cite{ManMcC83},  one can show that the Hausdorff dimension of the non-wandering set along the unstable manifold is given by the first zero of the pressure function
\cite[Theorem B]{SenTak12}. As there is no SRB measure for the H\'enon map $f$ at first bifurcation \cite{Tak12}, the dimension is strictly less than $1$.

Our study of $f$ heavily relies on the fact
that $f$ may be viewed as a singular perturbation of the Chebyshev quadratic
$x\in\mathbb R\to 1-2x^2$, because $0<b\ll1$ and $a^*\to2$ as $b\to0$.
Hence, we introduce a small constant $\varepsilon>0$ to quantify %measure
 a proximity of $f$ to the Chebyshev quadratic.
Define $t_0=t_0(\varepsilon,b)$ by
\begin{equation}\label{t0}
t_0=\inf \{t\in\mathbb R\colon P(t)\leq-(t/2)\log(4-\varepsilon)\}.\end{equation}
Observe that 
$0<t_0\leq+\infty$. 
%we shall work with sufficiently small $b>0$ so that any horizontal vector
%at a point outside of a $\delta$-neighborhood of the tangency near the origin gets expanded in norm by a factor in $[2-\varepsilon,4+\varepsilon]$ (See Lemma \ref{outside}). 

\begin{theorema}
For any small $\varepsilon>0$ there exists
$b_0>0$ such that 
if $0<b<b_0$ and $t<t_0$, then
there exists an equilibrium measure for $\varphi_t$.\end{theorema}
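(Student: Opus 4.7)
The plan is to bypass the discontinuity of $\varphi_t$ by transferring the variational problem to an induced system on which the lifted potential is H\"older continuous, and then to project an equilibrium state back to $f$. Following the strategy developed in \cite{SenTak12} for this setting, I would first construct an inducing scheme: a Borel set $Y\subset\Omega$ that stays bounded away from the tangency locus of $f$, a countable partition $\{Y_i\}_{i\geq 1}$ of $Y$ (mod a null set), and integers $\tau_i\geq 1$ such that the maps $f^{\tau_i}\colon Y_i\to Y$ are hyperbolic and together yield a Markov return map $F(y)=f^{\tau(y)}(y)$, $\tau|_{Y_i}=\tau_i$. Because each orbit piece $y,fy,\dots,f^{\tau(y)-1}y$ is controlled by the combinatorics of returns to the critical region, the induced potential $\bar\varphi_t(y):=\sum_{k=0}^{\tau(y)-1}\varphi_t(f^k y)$ inherits local H\"older regularity along unstable holonomies even though $\varphi_t$ itself is only measurable.

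Given the inducing scheme, I would invoke the variational principle for countable Markov shifts (Sarig-style) to produce an $F$-invariant ergodic probability $\bar\mu_t$ on $Y$ that is an equilibrium state for $\bar\varphi_t-P(t)\tau$ with zero induced pressure. Its existence reduces to positive recurrence of the associated Ruelle transfer operator, which in turn reduces to summability of a partition function of the form $\sum_{i}\exp\bigl(\sup_{Y_i}(\bar\varphi_t-P(t)\tau)\bigr)$. Assuming $\int\tau\,d\bar\mu_t<\infty$, the Abramov--Kac construction
\[
\mu_t := \frac{1}{\int\tau\,d\bar\mu_t}\sum_{i}\sum_{k=0}^{\tau_i-1}(f^k)_{\ast}\bigl(\bar\mu_t\big|_{Y_i}\bigr)
\]
defines an $f$-invariant Borel probability on $\Omega$ whose entropy and integral of $\varphi_t$ satisfy $F_{\varphi_t}(\mu_t)=P(t)$ by the Abramov and Kac identities, so $\mu_t$ is the desired equilibrium measure. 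Upper semi-continuity of the entropy map (Corollary \ref{USC}) is then used only to guarantee that $P(t)$ itself is realized and that this projected measure is indeed optimal.

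The heart of the argument, and the step I expect to be the main obstacle, is controlling the tail of the return time $\tau$ on $Y$, which is responsible both for the summability of the partition function in step 2 and for the integrability $\int\tau\,d\bar\mu_t<\infty$ needed in step 3. A long return $\tau_i\gg 1$ corresponds to an orbit piece shadowing a long Chebyshev-type trajectory along which the cumulative unstable expansion is comparable to $(4-\varepsilon)^{\tau_i/2}$, giving the heuristic
\[
\sup_{Y_i}\bigl(\bar\varphi_t-P(t)\tau\bigr)\;\approx\;-\tau_i\Bigl(\tfrac{t}{2}\log(4-\varepsilon)+P(t)\Bigr).
\]
The strict inequality $P(t)>-(t/2)\log(4-\varepsilon)$ built into the definition \eqref{t0} of $t_0$ is precisely what makes the bracket positive, thereby producing the exponential decay of the partition function and of the distribution of $\tau$ under $\bar\mu_t$; at $t=t_0$ the bracket vanishes and the construction collapses, which is why the range $(-\infty,t_0)$ is sharp for this method. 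The delicate technical work, and where all the fine information on $f$ at the first bifurcation must be brought to bear, consists in converting this heuristic into rigorous estimates: quantifying the distribution of $\tau$ through the combinatorics of returns to the critical region, bounding the distortion of $F$ so that the Birkhoff sums of $\log J^u$ over excursions are well-controlled, and verifying everything uniformly in $t$ on compact subintervals of $(-\infty,t_0)$ and for all $0<b<b_0$ sufficiently small.
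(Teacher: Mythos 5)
Your proposal takes a genuinely different route from the paper's proof. The paper argues by a direct compactness argument: using the ergodic decomposition it extracts a sequence $\{\mu_n\}\subset\mathcal M^e(f)$ with $F_{\varphi_t}(\mu_n)\to P(t)$, passes to a weak limit $\mu=u\delta_Q+(1-u)\nu$, and then shows $\nu$ is an equilibrium measure. The two ingredients are Corollary~\ref{USC} (upper semi-continuity of entropy, obtained from the two-symbol semi-conjugacy) and Proposition~\ref{lyap2}, which quantifies the possible drop in the unstable Lyapunov exponent along a weakly converging sequence in terms of the weight $u$ placed on $\delta_Q$. The hypothesis $t<t_0$, i.e.\ $P(t)>-(t/2)\log(4-\varepsilon)$, enters only at the very end of the proof to rule out $u>0$ by a short contradiction. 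No inducing scheme, countable Markov shift, or transfer-operator argument is used. Your inducing/Sarig approach is in the spirit of the companion work \cite{SenTak12}, where such a scheme is actually built and used to prove uniqueness and statistical properties; it is heavier but buys much more when carried out in full. The paper's approach is shorter precisely because it does not need to construct anything beyond the semi-conjugacy and the Lyapunov estimate.

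Two points in your write-up are genuine gaps rather than just omitted details. First, your tail heuristic $\sup_{Y_i}(\bar\varphi_t-P(t)\tau)\approx-\tau_i\bigl((t/2)\log(4-\varepsilon)+P(t)\bigr)$ ignores the multiplicity of cylinders with a given return time $n$ (roughly of order $2^n$, since $h_{\rm top}(f)=\log 2$), so positivity of the bracket alone does not give summability of the partition function; the correct condition depends on the precise combinatorics of the scheme, and in particular your claim that $(-\infty,t_0)$ is ``sharp for this method'' does not follow from the heuristic. Second, and more importantly, producing an $F$-equilibrium state and projecting it via Abramov--Kac does not automatically produce an $f$-equilibrium state: one must in addition show that $P(t)$ is attained within the class of measures liftable to the inducing scheme, i.e.\ that measures invisible to the scheme---above all $\delta_Q$, since the orbit of $Q$ never returns to the base $Y$---do not realize a strictly higher free energy. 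This is exactly where $t<t_0$ has to do its real work, and it is a separate issue from tail summability or from $\int\tau\,d\bar\mu_t<\infty$; your proposal folds it into the upper semi-continuity remark, where it does not belong. The paper handles this liftability issue directly via Proposition~\ref{lyap2}, which is the core of its Section~4 and has no counterpart in your sketch.
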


The reason for restricting the range of $t$ to values for which the pressure of the system is sufficiently large is to deal with
%from our considerations 
measures which charge the fixed saddle $Q$ (See FIGURE 1) and hence the discontinuity of $\varphi_t$.
The assumption $t<t_0$ 
%in the theorem is used to 
guarantees that such measures are not equilibrium measures for $\varphi_t$.

Let us here mention some previous results closely related to ours which develop thermodynamics of systems 
{\it at the boundary of uniform hyperbolicity}. Makarov $\&$ Smirnov \cite{MakSmi03} studied rational maps on the Riemannian sphere for which every critical point in the Julia set is non-recurrent.
Leplaideur, Oliveira $\&$ Rios \cite{LepOliRio11} 
and 
Arbieto $\&$ Prudente \cite{ArbPru12} studied
 partially hyperbolic horseshoes treated in \cite{DHSR09}.
  Leplaideur $\&$ Rios \cite{LepRio1,LepRio09} proved the existence and 
uniqueness of equilibrium measures for geometric potentials ($t$-conformal measures in their terms), 
for certain type 3 linear horseshoes in the 
plane (horseshoes with three symbols) with a single orbit of tangency studied in \cite{Rio01}. For this model, 
Leplaideur \cite{Lep11} proved the analyticity of the pressure function. 
 Our map $f$ is similar in spirit to the model of \cite{LepRio1,LepRio09} introduced in \cite{Kir96, Rio01}.
However,  different arguments are necessary as $f$ does not satisfy the specific assumptions in \cite{LepRio1,LepRio09},
such as the linearity and the balance between expansion/contraction rates.

%{\bf The global properties of our horseshoe,  such as the symbolic dynamics (the number of symbols), the amount of dissipation and the rate of expansion/contraction of the map, the curvature of the invariant manifolds, are different from 
%those of the horseshoe considered in \cite{LepRio1,LepRio09}.
%With this in mind, it is worthwhile to compare Theorem B with the aforementioned result in 
%\cite{Lep11} which claims that there is only one phase. 
%We conclude that,
%in the destruction of horseshoes through homoclinic bifurcations, 
%the occurrence of phase transitions at the first bifurcation depends upon
%the global properties of the horseshoe.}

\begin{figure}
\begin{center}
\includegraphics[height=6.5cm,width=14cm]{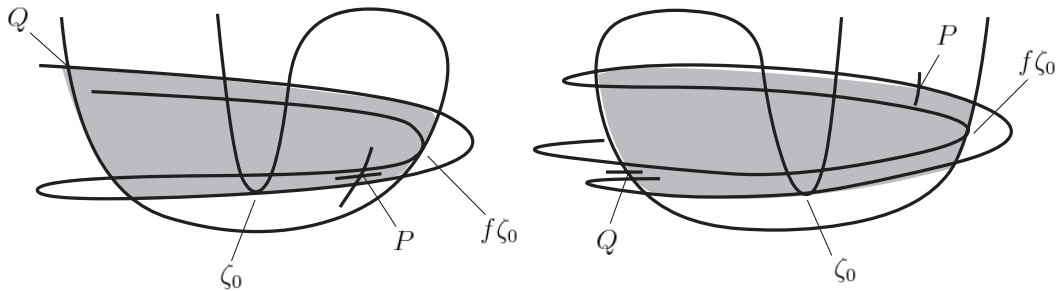}
\caption{Manifold organization for $a=a^*$. There exist two
hyperbolic fixed saddles $P$, $Q$ near $(1/2,0)$, $(-1,0)$
correspondingly. In the orientation preserving case (left),
$W^u(Q)$ meets $W^s(Q)$ tangentially. In the orientation reversing
case (right), $W^u(P)$ meets $W^s(Q)$ tangentially. The shaded
regions represent the region $R$.
 The point of tangency near the origin is denoted by $\zeta_0$ (See Sect.\ref{family}).}
\end{center}
\end{figure}

The main difficulty is to handle 
the limit behaviour of a sequence of Lyapunov exponents. 
For $\mu\in\mathcal M(f)$, let $\lambda^u(\mu)=\mu(\log J^u)$, which we call the {\it unstable Lyapunov exponent} of $\mu$. Since $\log J^u$ is not continuous, the weak convergence $\mu_n\to\mu$ does not imply the convergence $\lambda^u(\mu_n)\to\lambda^u(\mu)$.
We show that entropy and the unstable Lyapunov exponent  are upper semi-continuous as functions of measures 
(Corollary \ref{USC} and Proposition \ref{lyap2}).
Hence, the existence of equilibrium measures for $t\leq0$ follows from the upper semi-continuity of $F_{\varphi_t}$.
For $t>0$ we need a lower bound on the drop $\varliminf{\lambda(\mu_n)}-\lambda(\mu)$, as the unstable Lyapunov exponent may not be lower semi-continuous.

The structure of the paper is as follows. 
In Sect.2 we study the dynamics of $f$. 
%Different arguments from \cite{Lep11,LepRio1,LepRio09} are necessary, because the specific assumptions on their systems 
%(such as the linearity and the balance between expansion and contraction rates)
%do not hold for our $f$.
Our approach follows the well-known line for H\'enon-like systems \cite{BenCar91,MorVia93,WanYou01}, but now for the first bifurcation parameter.
A key ingredient is the notion of critical points (See Sect.\ref{critical}).
In brief terms, these are points where the fold of the map has the most dramatic effect.
To compensate for contractions of derivatives suffered at returns to a critical neighborhood,
we develop a binding argument (Proposition \ref{recovery}). 
In this argument we use a specific feature of the map $f$, namely that all critical points are non recurrent,
which does not hold for the maps treated in \cite{BenCar91,MorVia93,WanYou01}.

In Sect.3 we show that the dynamics on the non-wandering set is semi-conjugate to the full shift on two symbols.
This implies the upper semi-continuity of entropy.
Although this statement is not surprising, standard arguments do not work due to  the presence of the tangency. At the first bifurcation parameter the non-wandering set has a product structure, in the sense that the stable and unstable curves always intersect 
each other at a unique point. This defines the semi-conjugacy. 

In Sect.4 we use the results in Sect.2 to bound the amount of drop of the unstable Lyapunov exponents of sequences of measures (Proposition \ref{lyap2}). Using this bound and the assumption $t<t_0$, i.e., the pressure $P(t)$ is sufficiently large,
we  complete the proof of the theorem. 
In Appendix we show that $t_0$ can be made arbitrarily large by choosing small $\varepsilon$ and $b$.

\section{The dynamics}
In this section we study the dynamics of $f$. In Sect.\ref{family} we state and prove basic geometric properties
surrounding the invariant manifolds of fixed saddles.
Although the dynamics outside of a fixed neighborhood of the point of tangency is uniformly hyperbolic, returns to this neighborhood is unavoidable. To control
these returns, in Sect.\ref{critical} we introduce {\it critical points} following the idea of Benedicks $\&$ Carleson \cite{BenCar91}.
In Sect.\ref{recover} we analyze the dynamics near the orbits of the critical points. In Sect.\ref{uleaf} and Sect.\ref{bf} we discuss how to associate critical points to generic orbits which fall inside the neighborhood of the tangency.

We use several positive constants whose purposes are as follows:

\begin{itemize}
\item $\varepsilon,\delta,b$ are small constants, chosen in this order;
$\varepsilon$ is the constant specified in the theorem;
$\delta$ is used to define a critical region (See Sect.\ref{critical});
$b$ is the constant from \eqref{henon}.
We may shrink $\delta$ and $b$ if necessary, but only a finite number of times;

\item three constants below are used for estimates of derivatives:
\begin{equation}\label{sigma12}
\sigma=2-\frac{\varepsilon}{2},\  
\lambda_1=4-\frac{\varepsilon}{2},\ \lambda_2=4+\frac{\varepsilon}{2};
\end{equation}
The $\sigma$ is used as a lower bound for derivatives far away from a critical region;
$\lambda_1$, $\lambda_2$ are used as a lower and upper bounds for derivatives
near the fixed saddle near $(-1,0)$.

\item any generic constant independent of $\varepsilon$, $\delta$, 
$b$ is simply denoted by $C$.
\end{itemize}

\subsection{Basic geometric properties of the invariant manifolds}\label{family}
Let $P$, $Q$ denote the
fixed saddles near $(1/2,0)$ and $(-1,0)$ respectively.
 If $f$
preserves orientation, let $W^u=W^u(Q)$. If $f$ reverses
orientation, let $W^u=W^u(P)$. %For $z\in W^u$,
%let $t(z)$ denote any unit vector
%tangent to $W^u$ at $z$.
By a {\it rectangle} we mean any
closed region bordered by two compact curves in $W^u$ and two in the
stable manifolds of $P$, $Q$. By an {\it unstable side} of a
rectangle we mean any of the two boundary curves in $W^u$. A {\it
stable side} is defined similarly.

Let $R$ denote the largest possible rectangle determined by $W^u$ and $W^s(P)$, as indicated in Figure 1.
One of its unstable sides of $R$ contains the point of tangency near $(0,0)$, which we denote by $\zeta_0$.
Let $\alpha_0^+$ denote the stable side of $R$ containing $f\zeta_0$ and let $\alpha_0^-$ denote the other stable side of $R$. 
Since any point outside of $R$ diverges to infinity under positive or negative iteration \cite{CLR08},
the non-wandering set $\Omega$ is contained in $R$.

Let $S$ denote the closed lenticular region bounded by the unstable side of $R$ and the parabola in $W^s(Q)$ containing $\zeta_0$. Points in the interior of $S$ is mapped to the outside of $R$, and they never return to $R$ under any positive iteration.

\begin{figure}
\begin{center}
\includegraphics[height=6.5cm,width=14cm]{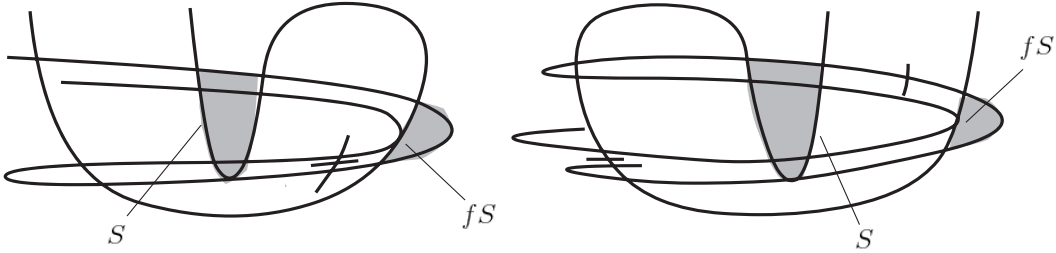}
\caption{The shaded closed lenticular region is denoted by $S$ (left: orientation preserving case; right: orientation reversing case).
 The interior of $S$ is mapped to the outside of $R$, and its forward iterates do not intersect $R$.}
\end{center}
\end{figure}

We need a couple of lemmas on the geometry of $W^u$.
Let $\alpha_1^+$ denote the component of $W^s(P)\cap R$ containing $P$. 
Let $\alpha_1^-$ denote the one of the two components of $R\cap f^{-1}\alpha_1^+$ which lies at the left of $\zeta_0$.
Let $\Theta$ denote the rectangle bordered by $\alpha_1^+$, $\alpha_1^-$ and the unstable sides of $R$.
The next lemma roughly states that `folds" in $W^u$ do not enter $\Theta$. 
By a {\it $C^2(b)$-curve} we mean
a closed curve for which the
slopes of its tangent directions are $\leq b^{\frac{1}{4}}$ and the
curvature is everywhere $\leq b^{\frac{1}{4}}$.
\begin{lemma}\label{c1}\cite[Section 4]{Tak12}
Any component of $\Theta\cap W^u$ is a $C^2(b)$-curve with endpoints in $\alpha_1^-,\alpha_1^+$.
\end{lemma}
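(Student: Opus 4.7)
My plan is to propagate the $C^2(b)$ property forward along $W^u$ by a graph-transform argument exploiting strong dissipativity, and then to locate the endpoints of each component of $\Theta\cap W^u$ using the injectivity of the immersion $W^u$.

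First I would recall the standard derivative computation for strongly dissipative H\'enon maps: if $\gamma$ is a $C^2(b)$-curve of uniformly bounded length that stays outside a small critical neighborhood of $\zeta_0$, then $f(\gamma)$ is again $C^2(b)$. Away from $\zeta_0$ the tangent vectors of $\gamma$ are stretched by a factor $\geq\sigma$ in the nearly horizontal direction while $\det Df=\mathcal O(b)$ contracts transversally, so a short calculation bounds both the new slope and the new curvature by $\mathcal O(\sqrt b)\leq b^{1/4}$. Applied inductively, starting from the local unstable manifold at $Q$ (itself a $C^2(b)$-curve since the unstable eigendirection of $Df(Q)$ has slope $\mathcal O(\sqrt b)$), every arc of $W^u$ whose forward history has avoided the critical neighborhood is $C^2(b)$.

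Next I would classify components $\gamma$ of $\Theta\cap W^u$. Since $\Theta$ shares its unstable sides with $R$, the two unstable sides of $R$ are two such components, and they are $C^2(b)$: at $\zeta_0$ the relevant side is tangent to the horizontal stable curve through $\zeta_0$, so has slope $0$ and, by the quadratic nature of the tangency, curvature $\mathcal O(b^{1/4})$; the other unstable side is far from $\zeta_0$. For any other component $\gamma$, injectivity of the immersion $W^u\to\mathbb R^2$ prevents $\gamma$ from touching the unstable sides of $R$, so $\gamma$ lies strictly between them and, being nearly horizontal, keeps a definite distance from $\zeta_0$. The preceding step then yields that $\gamma$ is $C^2(b)$.

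Finally I would locate the endpoints of $\gamma$. Since $\gamma$ is a connected component of the closed set $\Theta\cap W^u$ and $W^u$ has no endpoints in the plane, the two endpoints of $\gamma$ lie in $\partial\Theta=\alpha_1^+\cup\alpha_1^-\cup(\text{unstable sides of }R)$. If one of them lay in the relative interior of an unstable side of $R$, then, as $\gamma$ is not contained in that side, $W^u$ would meet itself transversely there, contradicting the injectivity of the unstable immersion. Hence both endpoints lie on $\alpha_1^-\cup\alpha_1^+$, and by near-horizontality of $\gamma$ one endpoint sits on each.

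The main obstacle is the quantitative part of the first step: one must genuinely rule out that forward iterates of the critical tangency produce strands of $W^u$ which re-enter $\Theta$ arbitrarily close to $\zeta_0$, or that successive folds inflate the slope or curvature beyond $b^{1/4}$. This is where the first-bifurcation hypothesis is essential---the orbit of $\zeta_0$ is attracted to the fixed saddle $P$ (cf.\ \cite{CLR08,BedSmi04}), so its returns to $\Theta$ remain uniformly far from $\zeta_0$ and the inductive $C^2(b)$ estimate closes.
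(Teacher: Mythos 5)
The paper does not prove Lemma~\ref{c1}; it is quoted verbatim from \cite[Section 4]{Tak12}, so there is no in-paper proof to compare against. Judged on its own merits, your outline identifies the right ingredients (forward graph-transform propagation of the $C^2(b)$ bounds, non-recurrence of critical orbits at the first bifurcation, and injectivity of the immersion $W^u$ to pin the endpoints to $\alpha_1^\pm$), but the middle step does not close.

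The concrete problem is in your second paragraph. You argue that a component $\gamma$ distinct from the unstable sides of $R$ lies strictly between them, hence ``keeps a definite distance from $\zeta_0$,'' and then invoke your first paragraph. Neither half of this works. First, components of $\Theta\cap W^u$ do \emph{not} stay uniformly away from $\zeta_0$: by the nested-rectangle structure (cf.\ Lemma~\ref{geo}, applied to $\Delta_k$) there are curves in $\tilde\Gamma^u$ accumulating on the unstable side of $R$ through $\zeta_0$, so they pass arbitrarily close to $\zeta_0$; yet the lemma asserts they are still $C^2(b)$. Second, and more importantly, even if $\gamma$ itself were far from $\zeta_0$, your first paragraph only covers arcs whose \emph{entire forward history} from the local unstable manifold avoids $I(\delta)$. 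Every component of $\Theta\cap W^u$ other than the two coming straight from the local unstable manifold is the descendant of an arc that passed through $I(\delta)$ and was folded; the arm of that fold re-enters $\Theta$ and one must show it has \emph{recovered} its $C^2(b)$ geometry. That recovery is precisely the bound-period machinery of Sect.~\ref{recover}, driven by the escape of all critical points other than $\zeta_0$; it is not a consequence of the graph transform alone. Two smaller points: the curvature of the unstable side of $R$ at $\zeta_0$ is not controlled ``by the quadratic nature of the tangency'' (the contact order with $W^s$ says nothing about the curvature of $W^u$ itself; what is used is that the backward orbit of $\zeta_0$ along $W^u$ never re-enters $I(\delta)$, so the graph transform applies uninterrupted); and the base of the induction should be the local unstable manifold of $Q$ or $P$ according to the orientation, since $W^u=W^u(Q)$ only in the orientation-preserving case.
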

The next lemma will not be used for some time.
For $k\geq0$, let $\Delta_k=\Theta\cap f^kR$.
Observe that $\Delta_k$ has $2^k$ components each of which is a
rectangle, and by Lemma \ref{c1}, the unstable sides of it are
$C^2(b)$-curves. Also observe that $\Delta_{k}$ is related to
$\Delta_{k-1}$ as follows: let $\mathcal Q_{k-1}$ denote any
component of $\Delta_{k-1}$. Then $\mathcal Q_{k-1}\cap f^kR$ has
two components, each of which is a component of $\Delta_{k}$.

\begin{lemma}\label{geo} For $k=0,1,\ldots$ and for each component
$\mathcal Q_{k}$ of $\Delta_k$, the Hausdorff
distance between its unstable sides is $\mathcal
O(b^{\frac{k}{2}})$.
\end{lemma}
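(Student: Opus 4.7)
The proof is by induction on $k$. The base case $k=0$ is immediate: $\Delta_0=\Theta$ has a single component whose two unstable sides coincide with those of $R$, and are therefore at Hausdorff distance $\mathcal O(1)=\mathcal O(b^{0})$.

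For the inductive step, fix a component $\mathcal Q_k$ of $\Delta_k$. By Lemma~\ref{c1} its two unstable sides are $C^2(b)$-curves in $W^u\cap\Theta$; in particular their slopes are bounded by $b^{1/4}$, so their Hausdorff distance is comparable to their vertical separation. Since $\mathcal Q_k\subset f^kR$, both sides lie in a connected strip of $f^kR\cap\Theta$, and it suffices to show that every such strip has vertical thickness $\mathcal O(b^{k/2})$.

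The key ingredient is a one-step contraction: two nearby near-horizontal sheets of $W^u$ inside $\Theta$, at vertical separation $d$, are mapped by $f$ to sheets at vertical separation at most $\sqrt{b}\,d$ (for $b$ small enough). Indeed, from
\[
Df=\begin{pmatrix}-2ax & \sqrt{b}\\ \pm\sqrt{b} & 0\end{pmatrix},
\]
the $f$-images of $(x,y_1)$ and $(x,y_2)$ share the $y$-coordinate $\pm\sqrt{b}\,x$ and are separated horizontally by $\sqrt{b}\,d$; moving along either image sheet (whose slope is $\leq b^{1/4}$ by Lemma~\ref{c1}) over that horizontal distance changes $y$ by at most $b^{1/4}\cdot\sqrt{b}\,d=b^{3/4}d\leq\sqrt{b}\,d$. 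Iterating this contraction $k$ times, starting from the vertical separation $\mathcal O(1)$ between the unstable sides of $R$, yields the bound $\mathcal O(b^{k/2})$ for the vertical thickness of any connected strip of $f^kR\cap\Theta$, hence for the Hausdorff distance between the unstable sides of $\mathcal Q_k$.

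The main technical obstacle is in validating the iteration of the one-step contraction through the folding of $f$: the tracked pieces of $W^u$ correspond to $f^j$-images of unstable sides of $R$ for various $j<k$, and may transit the critical region near $\zeta_0$ where the near-horizontality used in the slope-based estimate could break down. One therefore needs to verify that the $C^2(b)$-control guaranteed inside $\Theta$ by Lemma~\ref{c1} extends through all $k$ intermediate iterates, ensuring that the one-step estimate applies at every step.
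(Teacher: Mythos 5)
The paper's proof of Lemma~\ref{geo} is a self-contained area-comparison argument, and your proposal takes a genuinely different route — a per-iterate contraction argument — but, as you yourself flag in your last paragraph, there is a gap that is not just technical, and the area argument is precisely what the authors use to avoid it.

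The gap is this. Your one-step estimate matches a point $(x,y_1)$ on one unstable side with the point $(x,y_2)$ on the other, applies $f$, and converts the resulting horizontal offset $\sqrt{b}\,|y_1-y_2|$ into a vertical offset using the slope bound $b^{1/4}$ on the \emph{image} curves. To iterate this $k$ times you need, at every intermediate step $j$, that the relevant pieces of $W^u$ that you are tracking are near-horizontal with controlled curvature. But Lemma~\ref{c1} guarantees the $C^2(b)$-property only for components of $\Theta\cap W^u$. The $f^{-1}$-preimages of the unstable sides of $\mathcal Q_k$ lie near the stable sides $\alpha_0^\pm$ of $R$ — that is where the fold near $\zeta_0$ comes from — which is \emph{outside} $\Theta$, and nothing in the paper up to this point controls the slope or curvature of $W^u$ there. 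Moreover, the two unstable sides of $\mathcal Q_k$ are arcs of the folded curve $f^k(\partial^u R)$, so identifying which level-$(k-1)$ points pair up under the fold is itself nontrivial; the matching ``same $x\mapsto$ same $y$'' that your one-step computation relies on has to be re-established at each level, not just asserted. So the induction does not close, and the last paragraph of your proposal correctly names the missing step without supplying it.

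The paper sidesteps all of this with an argument that never looks at intermediate iterates. Writing the two unstable sides of $\mathcal Q_j$ as graphs $\gamma_1,\gamma_2$ over an interval $I$ and setting $L(x)=|\gamma_1(x)-\gamma_2(x)|$, the $C^2(b)$-property from Lemma~\ref{c1} (valid because the sides lie in $\Theta$) forces $|\gamma_1'-\gamma_2'|\le L^{1/2}$, whence $L(y)\ge L(x)/2$ for $|y-x|\le L(x)^{2/3}$; the inductive hypothesis on $\Delta_{j-1}$ guarantees this window fits inside $I$. This gives ${\rm area}(\mathcal Q_j)\ge L(x)^{5/3}/2$. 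On the other hand $|\det Df|=b$, so ${\rm area}(f^jR)\le(Cb)^j$. If $L(x)\ge b^{j/2}$ these two bounds contradict each other, so $L(x)<b^{j/2}$ and the Hausdorff distance is $\mathcal O(b^{j/2})$. This is both shorter and more robust than the iterative contraction, because it uses only the geometry of $\mathcal Q_j$ inside $\Theta$ together with the global area decay of $f^jR$ — no control of $W^u$ outside $\Theta$ and no tracking of the fold are needed.
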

\begin{proof}
We argue by induction on $k$. Assume the statement for
$0\leq k<j$. We regard the unstable sides of $\mathcal
Q_{j}$ as graphs of functions $\gamma_1$, $\gamma_2$ defined on an
interval $I$. Let $L(x)=|\gamma_1(x)-\gamma_2(x)|.$ Since
$\mathcal Q_j$ is contained in a component of $\Delta_{j-1}$, the
assumption of induction gives $L^{\frac{1}{2}}(x)\leq
(Cb)^{\frac{j-1}{4}}<{\rm length}(I).$ Moreover
$|\gamma_1'(x)-\gamma_2'(x)|\leq L^{\frac{1}{2}}(x)$ holds, since $\gamma$ is $C^2$ and so
otherwise $\gamma_1$ would intersect $\gamma_2$. By this and the
definition of $C^2(b)$-curves, $L(y)\geq L(x)
-(L^{\frac{1}{2}}(x)-Cb^{\frac{1}{4}}|x-y|)|x-y|$ holds for $x,y\in I$,
which is $\geq L(x)/2$ provided  $|x-y|\leq L^{\frac{2}{3}}(x)$.
Hence, ${\rm area}(\mathcal Q_{j})\geq L^{\frac{5}{3}}(x)/2$
holds. If $L(x)\geq b^{\frac{j}{2}},$ then ${\rm area}(\mathcal
Q_{j})\geq b^{\frac{5j}{6}}/2$, which yields a contradiction to
${\rm area}(\mathcal Q_{j})<{\rm area}(f^jR)\leq (Cb)^j$.
\end{proof}

\subsection{Critical points}\label{critical}
%{\bf In this subsection
%we first note that the dynamics is hyperbolic in most of the phase space, outside of a small neighborhood
%of the tangency $\zeta_0$. We then introduce critical points to control returns to this small neighborhood.}

We introduce a small neighborhood of the tangency $\zeta_0$ as follows.
Define
$$I(\delta)=(-\delta,\delta)\times(-b^{\frac{1}{4}}, b^{\frac{1}{4}}).$$
Observe that, for any given $\delta>0$, $\zeta_0\in I(\delta)$ provided $b$ is sufficiently small.

The next lemma, which controls the growth of horizontal 
vectors outside of a fixed neighbourhood of the tangency, readily follows from viewing $f$ 
as a perturbation of the Chebyshev quadratic which is smoothly conjugate to the tent map. We say a nonzero tangent vector $v$ is {\it $b$-horizontal} if ${\rm slope}(v)\leq b^{\frac{1}{4}}$.

\begin{lemma}\label{outside}
For any $\varepsilon>0$, $\delta>0$ there exists $b_0=b_0(\varepsilon,\delta)>0$ such that
the following holds for all $0<b<b_0$:

\begin{itemize}

\item[(a)] if $n\geq1$ and $z\in R$ is such that $z,fz,\ldots,f^{n-1}z \notin I(\delta)$,
then for any $b$-horizontal vector $v$ at $z$, $Df^n(z)v$ is $b$-horizontal and
$\|Df^n(z)v\|\geq \delta\sigma^n\|v\|$. If moreover $f^nz\in I(\delta)$, then $\|Df^n(z)v\|\geq \sigma^n\|v\|$;

 \item[(b)] if $z\in [-2,2]^2\setminus\Theta$, then for any $b$-horizontal vector $v$ at $z$, 
 $Df(z)v$ is $b$-horizontal and $\|Df(z)v\|\geq \sigma\|v\|$. \end{itemize}
\end{lemma}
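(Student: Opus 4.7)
The strategy is to prove (b) by direct computation, then obtain (a) by iterating while comparing the dynamics to the one-dimensional Chebyshev quadratic $q(x)=1-2x^{2}$, which is smoothly conjugate to the tent map $T(x)=1-2|x|$ on $[-1,1]$ via $h(u)=\sin(\pi u/2)$.

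For (b), write $z=(x,y)$ and $v=(v_{1},v_{2})$ with $|v_{2}|\leq b^{1/4}|v_{1}|$. From the explicit formula
\[
Df(z)v=\bigl(-2ax\,v_{1}+\sqrt{b}\,v_{2},\ \pm\sqrt{b}\,v_{1}\bigr),
\]
and the observation that the curves $\alpha_{1}^{\pm}$ bounding $\Theta$ are $C^{2}(b)$-close to the vertical lines $\{x=\pm 1/2\}$ --- since $P\approx(1/2,0)$ has near-vertical stable direction, and $\alpha_{1}^{-}$, as a component of $f^{-1}\alpha_{1}^{+}$, sits near the other preimage at $x\approx-1/2$ --- the condition $z\notin\Theta$ forces $|x|\geq 1/2-O(b^{1/4})$. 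Since $a=a^{*}\to 2$ as $b\to 0$, one has $2a|x|\geq 2-\varepsilon/4>\sigma$ for $b$ small. The $\sqrt{b}\,v_{2}$ contribution is negligible under $b$-horizontality, yielding $\|Df(z)v\|\geq\sigma\|v\|$; the new slope is at most $\sqrt{b}/\sigma\leq b^{1/4}$.

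For (a), $R$ is an $O(\sqrt{b})$-thin horizontal strip, so $z\in R\setminus I(\delta)$ forces $|x|\geq\delta$. Iterating the slope computation of (b) keeps $v^{i}:=Df^{i}(z)v$ $b$-horizontal as long as $f^{i}z\notin I(\delta)$. The first-coordinate recursion reads
\[
v_{1}^{i+1}=-2ax_{i}\,v_{1}^{i}\bigl(1+O(b^{3/4}/\delta)\bigr),
\]
so that $|v_{1}^{n}/v_{1}^{0}|$ equals $\prod_{i<n}|2ax_{i}|$ up to a controlled multiplicative error. Transferring to the Chebyshev dynamics via $q\circ h=h\circ T$ and $|(T^{n})'|\equiv 2^{n}$ gives the explicit formula
\[
|(q^{n})'(x_{0})|=2^{n}\,\frac{\sqrt{1-(q^{n}x_{0})^{2}}}{\sqrt{1-x_{0}^{2}}}.
\]
The hypothesis $q^{i}(x_{0})\notin(-\delta,\delta)$ for $i<n$ forces $q^{n}(x_{0})\leq 1-2\delta^{2}$. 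A case analysis --- either $q^{n}x_{0}$ stays bounded away from $-1$, in which case $\sqrt{1-(q^{n}x_{0})^{2}}\geq C\delta$ directly, or $q^{n}x_{0}$ approaches $-1$, in which case backward iteration under the hypothesis forces $x_{0}$ to lie near $-1$ as well and the singularities in numerator and denominator cancel to yield expansion $\approx 4^{n}$ --- gives $|(q^{n})'(x_{0})|\geq\delta\sigma^{n}$ in all cases. When additionally $f^{n}z\in I(\delta)$, $|q^{n}x_{0}|\leq\delta$ and $\sqrt{1-(q^{n}x_{0})^{2}}\geq 1-O(\delta^{2})$ is bounded below independently of $\delta$, upgrading the estimate to $\sigma^{n}$.

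The main obstacle is controlling the cumulative $O(\sqrt{b})$-size deviation of the H\'enon dynamics from its Chebyshev limit over $n$ iterations. This is handled by summing the per-step error geometrically, exploiting the slack between the chosen $\sigma=2-\varepsilon/2$ and the natural Chebyshev rate $2$, so that for $b$ sufficiently small the induction propagates both the $b$-horizontality of $v^{i}$ and the norm estimate to the end of the orbit segment.
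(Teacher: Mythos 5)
The paper does not prove Lemma~\ref{outside}: it is dispatched in one sentence as ``readily follows from viewing $f$ as a perturbation of the Chebyshev quadratic which is smoothly conjugate to the tent map.'' Your proposal is an attempt to actually carry out that perturbation argument, and part (b) (a single-step estimate) is fine. The main issue is in part (a), where you conflate the Chebyshev orbit with the H\'enon orbit. You correctly reduce to bounding $\prod_{i<n}|2ax_i|$, where $x_i$ denotes the $x$-coordinate of the \emph{H\'enon} iterate $f^iz$, and you have the hypothesis $|x_i|\geq\delta$. You then invoke the closed-form expression
$|(q^n)'(x_0)|=2^n\sqrt{1-(q^nx_0)^2}/\sqrt{1-x_0^2}$
and translate the hypothesis into ``$q^i(x_0)\notin(-\delta,\delta)$ for $i<n$''. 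But this translation is not available: the H\'enon orbit $x_i$ and the Chebyshev orbit $q^i(x_0)$ started from the same $x_0$ diverge at rate up to $4$ per step, so after $n\gtrsim\log(1/b)$ iterates they bear no relation to each other. In particular $|x_i|\geq\delta$ does not imply $|q^ix_0|\geq\delta$, and $\prod|2ax_i|$ is not approximated by $|(q^n)'(x_0)|$ for large $n$. The remark about ``summing per-step error geometrically'' only controls the discrepancy between $\|Df^n v\|$ and $\prod|2ax_i|$; it does not address the orbit divergence that breaks the transfer to $q$.

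What actually needs to be proved is a telescoping estimate \emph{along the H\'enon orbit itself}. Writing $\phi(x)=\sqrt{1-x^2}$, the Chebyshev identity $|q'(x)|\phi(x)=2\phi(qx)$ suggests trying $|2ax_i|\,\phi(x_i)\geq(2-O(\sqrt b))\,\phi(x_{i+1})$ for the recursion $x_{i+1}=1-ax_i^2+O(\sqrt b)$. This works when $x_i$ is bounded away from $\pm1$, but the additive $O(\sqrt{b})$ error overwhelms $\phi$ when the orbit is within $O(\sqrt{b})$ of $\pm1$ --- which it can be, since orbits pass arbitrarily close to the saddle $Q\approx(-1,0)$. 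Near $Q$ you should instead invoke the hyperbolicity of $Q$ directly (the per-step multiplier is close to $4>\sigma$, and in fact $\geq\lambda_1$), and then glue the two regimes together. The case analysis you sketch (``near $-1$'' vs.\ ``away from $-1$'') is the right idea, but it must be run on the H\'enon orbit with this gluing, not imported from the exact Chebyshev formula. As it stands the crucial step is asserted rather than proved.
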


By virtue of Lemma \ref{outside}, the dynamics outside of the fixed neighborhood $I(\delta)$ is uniformly hyperbolic.
To recover the loss of hyperbolicity due to returns to the inside of $I(\delta)$, we mimic the strategy of Benedicks \& Carleson \cite{BenCar91}
and develop a binding argument relative to {\it critical points}.
For the rest of this subsection we introduce critical points, and perform preliminary estimates
needed for the binding argument in the next subsection.

\begin{figure}
\begin{center}
\includegraphics[height=4cm,width=8.3cm]{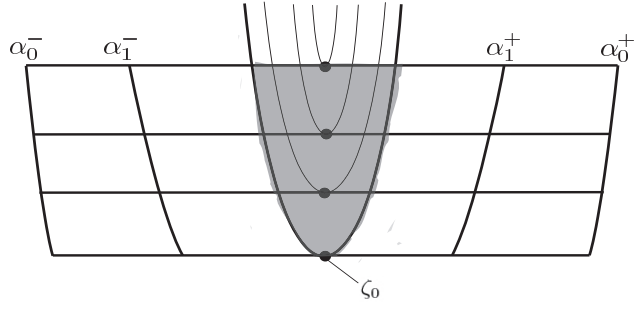}
\caption{Thick segments are part of $W^u$ and $W^s(P)$, $W^s(Q)$. The shaded region is $S$. 
The dots represent the critical points on $\Theta\cap W^u$. The parabolas represent the pull-backs of the leaves of $\mathcal F^s$.}
\end{center}
\end{figure}

From
the hyperbolicity of the saddle $Q$ it follows that (use 
the Center Manifold Theorem \cite{Shu87} for the tangent bundle map)
there exist two mutually disjoint connected open sets
$U^-$, $U^+$ independent of $b$ such that $\alpha_0^-\subset U^-$,
$\alpha_0^+\subset U^+$, $U^+\cap  fU^+=\emptyset=U^+\cap  fU^-$
and
a foliation $\mathcal F^s$ of $U:=U^-\cup U^+$ by one-dimensional leaves such
that:
\begin{itemize}
\item[(F1)] $\mathcal F^s(Q)$, the leaf of $\mathcal F^s$ containing $Q$,
contains $\alpha_0^-$; 
\item[(F2)] if $z,fz\in U$, then $f(\mathcal F^s(z))
\subset\mathcal F^s(fz)$;

\item[(F3)] Let $e^s(z)$ denote the unit vector in $T_z\mathcal F^s(z)$ with the
positive second component. Then: $z\to e^s(z)$ is $C^{1}$ and $\|Dfe^s(z)\|\leq Cb$, $\|\frac{\partial}{\partial z}e^s(z)\|\leq C$;

\item[(F4)] If $z,fz\in U$, then ${\rm slope}(e^s(z))\geq
C/\sqrt{b}.$
\end{itemize}
We call $\mathcal F^s$ a {\it stable foliation} on $U$.
From (F1), (F2) and $f\alpha_0^+\subset\alpha_0^-$ 
it follows that there is a leaf of $\mathcal F^s$ which 
contains $\alpha_0^+$. 
(F4) can be checked by
contradiction: if it were false, then ${\rm slope}(e^s(fz))\ll1.$

%\item[(F2)] each leaf intersects $W^u_{\rm loc}(Q)$ exactly at one
%point, and the length of each leaf is $\geq C$;

%\begin{definition}\label{definitioncritical}
We say $\zeta\in W^u$ is a {\it critical point} if $f\zeta\in U^+$ and 
$T_{f\zeta}W^u=T_{f\zeta}\mathcal F^s(f\zeta)$.
%\end{definition}
From the results in \cite{Tak12} it follows that any component of $\Theta\cap W^u$ admits a unique
critical point, and it is contained in $S$. Hence:

\begin{itemize}

\item $\Omega$ does not contain any critical point other than $\zeta_0$;

\item any critical point other than $\zeta_0$ is mapped by $f$ to the outside of $R$, and then escapes
to infinity under positive iteration. 
\end{itemize}

The second property implies that the critical orbits are contained in a region 
where the uniform hyperbolicity is apparent. Hence, by binding generic orbits which fall inside $I(\delta)$
to suitable critical orbits, and then copying the exponential growth along the critical orbits, one shows that the horizontal slopes and the expansion are restored after suffering from the loss due to  the folding behavior near $I(\delta)$.
The time necessary for this recovery is called {\it bound periods}, introduced in Sect.\ref{recover}. This type of {\it binding argument} traces back to Jakobson \cite{Jak81} and Benedicks $\&$ Carleson \cite{BenCar85,BenCar91}.
Our binding argument is an extension of Benedicks $\&$ Carleson's to the first bifurcation parameter $a^*$
which is not treated in \cite{BenCar91}.
 %In this extension, new considerations are necessary because of the escaping property of the critical points, which is not the case in \cite{BenCar91}.}

The escaping property motivates the following definition.
For a critical point $\zeta$ define
$$n(\zeta)=\sup\{i\geq1\colon f^i\zeta\in U\}.$$ %is called an {\it escaping time of $\zeta$.}
We have $n(\zeta)\in[1,+\infty]$, and $n(\zeta)=+\infty$ if and only if $\zeta=\zeta_0$. For $i\geq1$ let $w_i(\zeta)=Df^{i-1}(f\zeta)\left(\begin{smallmatrix}
1\\0\end{smallmatrix}\right)$. 
Since 
all forward iterates of $\zeta$ up to time $n(\zeta)$ are near the stable sides of $R$,
for every $1\leq i\leq n(\zeta)$ we have
$${\rm slope}(w_i(\zeta))\leq\sqrt{b},$$
and 
\begin{equation}\label{misiurewicz}
\lambda_1\|w_i(\zeta)\|\leq \|w_{i+1}(\zeta)\|\leq
5\|w_i(\zeta)\|.\end{equation}

For $r>0$ let $$B(r)=\{x\in\mathbb R^2\colon \min\{|x-y|\colon y\in\alpha_0^-\cup\alpha_0^+\}\leq r\}.$$
Choose a constant $\tau>0$ independent of $b$ such that 
\begin{equation}\label{TAU} 
B(22\tau)\subset U\ \ \text{and}\ \ \tau\leq\frac{\sigma}{100}\log2.\end{equation}
For $p\in[1,n(\zeta)]$
define
\begin{equation}\label{Theta}D_p(\zeta)=\tau\left[\sum_{i=1}^{p}
d_i^{-1}(\zeta)\right]^{-1},\ \
\text{where}\ \
d_i(\zeta)=\frac{\|w_{i+1}(\zeta)\|}{\|w_{i}(\zeta)\|^2}.\end{equation}
The number $D_p(\zeta)$ serves to define a strip around the leaf $\mathcal F^s(f\zeta)$ on which 
the distortion of $f^{p-1}$ is controlled (see Lemma \ref{dist} and \eqref{dist1}).
The next lemma gives estimates on the size of this strip and its $f^{p-1}$-iterates.

\begin{lemma}\label{size2}
There exists $p_0=p_0(\varepsilon)$ such that if $p\geq p_0$, then
\begin{itemize}

\item[(a)] $(\lambda_2+\varepsilon/2)^{-p}\leq D_{p}(\zeta)\leq \lambda_1^{-p};$

\item[(b)] $\tau/5\leq\|w_{p}(\zeta)\|D_{p}(\zeta)\leq 5\tau.$
\end{itemize}
\end{lemma}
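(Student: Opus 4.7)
The plan is a direct computation from the definition of $D_p$ together with \eqref{misiurewicz}. I would set $a_i := \|w_i(\zeta)\|$ (so $a_1 = 1$) and $\mu_i := a_{i+1}/a_i \in [\lambda_1, 5]$ by \eqref{misiurewicz}; then $d_i^{-1} = a_i^2/a_{i+1} = a_i/\mu_i$, so $a_i/5 \leq d_i^{-1} \leq a_i/\lambda_1$.

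The first step is to sandwich the sum $S_p := \sum_{i=1}^p d_i^{-1}$ by constant multiples of $a_p$. Because $\mu_i \geq \lambda_1$, the sequence $(a_i)$ grows at least geometrically, so $a_i \leq a_p\,\lambda_1^{i-p}$ and $a_p \leq \sum_{i=1}^p a_i \leq a_p\,\lambda_1/(\lambda_1-1)$. Combined with the two-sided bound on $d_i^{-1}$ this gives
\[ \frac{a_p}{5}\; \leq\; S_p\; \leq\; \frac{a_p}{\lambda_1-1}, \qquad \text{hence} \qquad \frac{\tau(\lambda_1-1)}{a_p}\; \leq\; D_p(\zeta)\; \leq\; \frac{5\tau}{a_p}. \]
Since $\lambda_1-1 = 3-\varepsilon/2 > 1/5$, this already yields $\tau/5 \leq a_p D_p(\zeta) \leq 5\tau$, which is exactly part (b); notice that (b) requires no largeness of $p$.

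For (a), the upper bound $D_p \leq \lambda_1^{-p}$ follows from $a_p \geq \lambda_1^{p-1}$ (by iterating \eqref{misiurewicz}) together with the numerical inequality $5\tau\lambda_1 \leq 1$, which is secured by \eqref{TAU}. The lower bound $D_p \geq (\lambda_2+\varepsilon/2)^{-p}$ is where the main obstacle lies: the crude estimate $a_p \leq 5^{p-1}$ from \eqref{misiurewicz} is not enough, since $5 > \lambda_2 + \varepsilon/2$. What one needs is the refined upper bound $a_p \leq C\lambda_2^{p-1}$. This rests on the fact that $f^2\zeta,\ldots,f^{n(\zeta)}\zeta$ stay in $U^-$, a thin neighborhood of $\alpha_0^- \ni Q$: along this segment of orbit $Df$ differs from $Df(Q)$ by $O(\sqrt{b}+\varepsilon)$ and thus expands every $b$-horizontal vector at a rate in $[\lambda_1,\lambda_2]$. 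Once the refinement is available, the first-step sandwich yields $D_p \geq C'\tau(\lambda_1-1)\lambda_2^{1-p}$, and the target inequality reduces to
\[ C'\tau(\lambda_1-1)\lambda_2\; \geq\; \left(\frac{\lambda_2}{\lambda_2+\varepsilon/2}\right)^{p}, \]
whose right-hand side decays geometrically in $p$, so it holds for every $p \geq p_0(\varepsilon)$ with $p_0$ depending only on $\varepsilon$.

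In summary the computation is essentially routine once the ratios $\mu_i$ are pinned in $[\lambda_1,\lambda_2]$; the only delicate point is replacing the crude upper bound $5$ in \eqref{misiurewicz} by $\lambda_2$, and this uses the specific location of critical orbits in the component $U^-$ of $U$ surrounding $Q$.
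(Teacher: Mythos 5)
Your proof is correct and follows essentially the same route as the paper's: two-sided control of the ratios $\|w_{i+1}\|/\|w_i\|$ from \eqref{misiurewicz}, a geometric-series sandwich of $\sum d_i^{-1}$ by constant multiples of $\|w_p\|$ (giving (b) with no largeness of $p$), and then a comparison of $\lambda_1^{p}$ and $\lambda_2^{p}$ against $(\lambda_2+\varepsilon/2)^{p}$ for (a), where the lower bound requires $p$ large depending only on $\varepsilon$. One point you flag is genuinely a subtle spot: the literal upper bound ``$5$'' in \eqref{misiurewicz} is indeed too weak for the lower estimate in (a) because $5 > \lambda_2+\varepsilon/2$; the paper's own displayed chain quietly invokes the sharper ratio bound $\|w_{i+1}\|\leq\lambda_2\|w_i\|$ (needed to get $\min_i d_i\geq\lambda_1\lambda_2^{-p+1}$), which is justified by the orbit of $\zeta$ lingering near $Q$ where $Df$ is $\lambda_2$-close to $4$, exactly as you say, though the paper does not spell this out.
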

\begin{proof}
\eqref{misiurewicz} yields
$$\left(\lambda_2+\frac{\varepsilon}{2}\right)^{-p}\leq \frac{\tau}{p}\cdot\lambda_1\lambda_2^{-p+1}\leq\frac{\tau}{p}\cdot\min_{1\leq i\leq p}d_i(\zeta)\leq D_p(\zeta)\leq \tau d_p(\zeta)\leq 5\tau\lambda_1^{-p+1}\leq\lambda_1^{-p}.$$
The first inequality holds for sufficiently large $p$ depending only on $\varepsilon$.
As for (b) we have
\[\|w_{p}(\zeta)\|D_{p}(\zeta)<\tau\|w_p(\zeta)\|d_p(\zeta)
=\tau
\frac{\|w_{p+1}(\zeta)\|}{\|w_{p}(\zeta)\|}\leq 5\tau.\]
For the lower estimate, \eqref{misiurewicz} yields
\[\frac{1}{\|w_{p}(\zeta)\|D_{p}(\zeta)}=
\frac{1}{\tau}\sum_{i=1}^p\frac{\|w_{i}(\zeta)\|}{\|w_{p}(\zeta)\|}
\frac{\|w_{i}(\zeta)\|}{\|w_{i+1}(\zeta)\|}\leq\frac{1}{\tau}\sum_{i=1}^p\lambda_1^{-(p-i+1)}\leq
\frac{5}{\tau}.\qedhere\]
\end{proof}

%In what follows we assume $\delta$ and $b$ are sufficiently small,
%so that any critical point is non escaping up to time $n_0$.

\subsection{Recovering Hyperbolicity}\label{recover}
 We now develop a binding argument for the map $f$ at the first bifurcation in order to recover hyperbolicity.
Throughout this subsection we assume $\zeta$ is a critical point, and 
$\gamma$ is a $C^2(b)$-curve in $I(\delta)$ which contains $\zeta$ and is tangent to $E^u(\zeta)$.
Consider the leaf $\mathcal F^s(f\zeta)$ of the stable foliation $\mathcal F^s$ through $f\zeta$.
This leaf
may be expressed as a graph of a smooth function:
there exists an open interval $J$ independent of $b$ and a smooth function $y\mapsto x(y)$ on $J$ such that
 $$\mathcal F^s(f\zeta)=\{(x(y),y)\colon y\in J\}.$$
For a point $z\in\gamma\setminus\{\zeta\}$ we associate two integers $p(z)\in[1,n(\zeta)]$, $q(z)\in[1,n(\zeta)]$ called {\it bound} and {\it fold periods} as follows.
First, let $p=p(z)$ be such that
\begin{equation}\label{bddperiod}fz\in\left\{(x,y)\colon
 D_{p}(\zeta)< |x-x(y)|\leq D_{p-1}(\zeta),\ y\in J\right\},\end{equation}
when it makes sense. Next,  define $q=q(z)$ by
\begin{equation}\label{q}q=\min\left\{1\leq i<p\colon|\zeta-z|
^{\beta}  \|w_{j+1}(\zeta)\|\geq1\ \ \text{for every}\ \ i\leq
j<p\right\},\end{equation}
where \begin{equation}\label{beta}\beta=2/\log\left(1/b\right).\end{equation}
Note that \eqref{bddperiod} \eqref{beta} yield $|\zeta-z|
^{\beta}  \|w_{p}(\zeta)\|\geq1$. So, $q$ makes sense
when $p$ does, and $q\leq p-1$.
Also, note that if $\zeta=\zeta_0$, then $p$ makes sense for all $z\in\gamma\setminus\{\zeta\}$
because $n(\zeta_0)=+\infty$. Otherwise, $p$ does not make sense when $z$ is
too close to $\zeta$.

The purposes of these two periods are as follows:
the fold period is used to restore large slopes of iterated tangent vectors to small slopes; the bound period is used to recover an expansion of derivatives.

We are in position to state a result we are leading up to. Let us agree that, for two
positive numbers $A$, $B$, $A\approx B$ indicates $1/C\leq a/b\leq C$
for some $C\geq1$ independent of $\varepsilon$, $\delta$, $b$.
\begin{prop}\label{recovery}
Let $\zeta$ be a critical point, and $\gamma$ a $C^2(b)$-curve in $I(\delta)$
which contains $\zeta$ and is tangent to $E^u(\zeta)$.
%\marginpar{Sam: note that I changed $E^u$ to $W^u$ in order to use the same notation as on line 2 of this section\ref{recover}
%Hiroki: it has to be $E^u(\zeta)$. I have changed the notation on line 2 to $E^u(\zeta)$.}
If $z\in\gamma\setminus\{\zeta\}$ and $p$, $q$ are the corresponding bound and fold periods, then:
\begin{itemize}
\item[(a)]
 $\log|\zeta-z|^{-\frac{2}{\log5}}\leq p\leq\log
|\zeta-z|^{-\frac{3}{\log\lambda_1}};$
\item[(b)]  $\log|\zeta-z|^{-\frac{\beta}{\log\lambda_2}}\leq
 q\leq\log|\zeta-z|^{-\frac{\beta}{\log\lambda_1}}+1$.
\end{itemize}
Let $v(z)$ denote any unit vector tangent to $\gamma$ at $z$. Then:
\begin{itemize}
\item[(c)]
 $\|Df^iv(z)\|\approx
|\zeta-z|\cdot\Vert w_i(\zeta)\Vert$ for every $q< i\leq p$;
\item[(d)] $\|Df^iv(z)\|<1$ for every $1\leq i<q$;
\item[(e)] $\displaystyle{\|Df^pv(z)\|\geq (4-\varepsilon)^{\frac{p}{2}}};$
\item[(f)] ${\rm slope}(Df^pv(z))\leq b^{\frac{1}{4}}$.
\end{itemize}
\end{prop}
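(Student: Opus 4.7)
The plan is to work in the invariant splitting furnished by the stable foliation $\mathcal F^s$. For $1\leq i\leq p$ decompose
$$Df^i v(z)=\alpha_i(z)\,w_i(\zeta)+\beta_i(z)\,e^s(f^i\zeta).$$
Using $Dfw_j(\zeta)=w_{j+1}(\zeta)$ by the definition of the $w_j$, together with $Df\,e^s(f^j\zeta)=\lambda_s(f^j\zeta)\,e^s(f^{j+1}\zeta)$ with $|\lambda_s|\leq Cb$ (which follows from (F2) and (F3)), the coefficients evolve by $\alpha_{i+1}=\alpha_i$ and $\beta_{i+1}=\lambda_s(f^i\zeta)\beta_i$. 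Hence $\alpha_i\equiv\alpha_1$ and $|\beta_i|\leq(Cb)^{i-1}|\beta_1|$. The critical-point condition $T_{f\zeta}W^u=T_{f\zeta}\mathcal F^s(f\zeta)$ forces $\alpha_1(\zeta)=0$; the explicit H\'enon form at $\zeta\in I(\delta)$ yields $\|Dfv(\zeta)\|\asymp\sqrt b$ and $|\alpha_1'(\zeta)|\asymp 1$. A Taylor expansion of $\alpha_1$ and $\beta_1$ along $\gamma$ then gives $|\alpha_1(z)|\asymp|\zeta-z|$ and $|\beta_1(z)|\asymp\sqrt b$, the first estimate expressing the genuine (Morse) quadratic tangency of $f\gamma$ with $\mathcal F^s(f\zeta)$ at $f\zeta$. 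Consequently $|\alpha_i|\,\|w_i(\zeta)\|\asymp|\zeta-z|\,\|w_i(\zeta)\|$ and $|\beta_i|\leq(Cb)^{i-1}\sqrt b$ for every $1\leq i\leq p$.

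For (a), combine \eqref{bddperiod} with the quadratic tangency to get $D_p(\zeta)<C|\zeta-z|^2\leq C'D_{p-1}(\zeta)$; inserting Lemma~\ref{size2}(a) and using $\lambda_2+\varepsilon/2<5$ yields both bounds of (a), the additive $+1$ in the upper bound being absorbed into the exponent $3$ by shrinking $\delta$. For (b), I would refine the upper bound of \eqref{misiurewicz} to $\|w_{j+1}(\zeta)\|\leq\lambda_2^j$: since every iterate $f^i\zeta$, $1\leq i\leq n(\zeta)$, lies in $U$ near the saddle $Q$, the Center Manifold Theorem forces $Df$ to stretch $b$-horizontal vectors by at most $\lambda_2$ at each step. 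The minimality of $q$ then gives $\lambda_1^{q-1}\leq\|w_q(\zeta)\|<|\zeta-z|^{-\beta}\leq\|w_{q+1}(\zeta)\|\leq\lambda_2^q$, from which both sides of (b) follow.

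For (c)--(f), the transversality estimates ${\rm slope}(w_i(\zeta))\leq\sqrt b$ and ${\rm slope}(e^s(f^i\zeta))\geq C/\sqrt b$ coming from (F4) imply
$$\|Df^iv(z)\|\asymp\max\bigl(|\alpha_i|\,\|w_i(\zeta)\|,\,|\beta_i|\bigr).$$
For $q<i\leq p$ the definition of $q$ applied at $j=i-1$ gives $|\zeta-z|^\beta\|w_i(\zeta)\|\geq 1$, whence $|\alpha_i|\,\|w_i\|\asymp|\zeta-z|\,\|w_i\|\geq|\zeta-z|^{1-\beta}$; since $Cb/\lambda_1\ll 1$ for $b$ small, $|\beta_i|\leq(Cb)^{i-1}\sqrt b$ is negligible compared to $|\alpha_i|\,\|w_i\|$, proving (c). For (f), the $w_p(\zeta)$-part is $b$-horizontal and the subdominant $\beta_p e^s$-contribution tilts the slope by at most $|\beta_p|/(|\alpha_p|\,\|w_p\|)=o(b^{1/4})$, so ${\rm slope}(Df^pv(z))\leq b^{1/4}$. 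For $1\leq i<q$, the reverse inequality $|\zeta-z|^\beta\|w_{i+1}\|<1$ combined with $|\zeta-z|<|\zeta-z|^\beta<1$ gives $|\alpha_i|\,\|w_i\|\leq|\zeta-z|^{1-\beta}<\delta^{1-\beta}$, and $|\beta_i|\leq(Cb)^{i-1}\sqrt b$ is smaller still; both are less than $1$ for $\delta$ small, giving (d). For (e), (c) at $i=p$ combined with Lemma~\ref{size2}(b) gives
$$\|Df^pv(z)\|\gtrsim|\zeta-z|\,\|w_p(\zeta)\|\geq D_p(\zeta)^{1/2}\|w_p(\zeta)\|\asymp\sqrt{\tau\,\|w_p(\zeta)\|}\geq\sqrt\tau\,\lambda_1^{(p-1)/2},$$
and since $\lambda_1=4-\varepsilon/2>4-\varepsilon$, the target $\|Df^pv\|\geq(4-\varepsilon)^{p/2}$ follows once $p$ is large, i.e.\ once $\delta$ is small (by (a)).

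The main obstacle I anticipate is the estimate $|\alpha_1(z)|\asymp|\zeta-z|$: it requires a lower bound on the transverse second derivative of $f\gamma$ relative to $\mathcal F^s(f\zeta)$ at $f\zeta$ that is uniform in the critical point $\zeta$ and in the $C^2(b)$-curve $\gamma$. This demands a careful unpacking of the explicit H\'enon form, the $C^2(b)$-regularity of $\gamma$, and the $C^1$-dependence of $e^s$ on its base point (F3). A secondary delicate point is the sharpening $\|w_{j+1}\|\leq\lambda_2^j$ of \eqref{misiurewicz}, which relies on every iterate $f^i\zeta$ remaining inside a narrow tubular neighbourhood of $W^s(Q)$ throughout $1\leq i\leq n(\zeta)$---a manifestation of the non-recurrence of critical points noted in the introduction.
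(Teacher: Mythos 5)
Your proposal has the right overall shape (split the vector into a horizontal-like piece and a stable-foliation piece, iterate, and let the contracted part become negligible), but the evolution law you write for the coefficients is not correct as stated, and the gap it hides is precisely the technical heart of the paper's proof. You decompose $Df^i v(z)=\alpha_i\,w_i(\zeta)+\beta_i\,e^s(f^i\zeta)$ and claim $\alpha_{i+1}=\alpha_i$, $\beta_{i+1}=\lambda_s(f^i\zeta)\beta_i$. That would require $Df(f^iz)\,w_i(\zeta)=w_{i+1}(\zeta)$ and $Df(f^iz)\,e^s(f^i\zeta)=\lambda_s\,e^s(f^{i+1}\zeta)$, but the linear map being applied is $Df$ at $f^iz$, whereas the identities $Df\,w_i(\zeta)=w_{i+1}(\zeta)$ and $Df\,e^s(f^i\zeta)\parallel e^s(f^{i+1}\zeta)$ hold only with $Df$ evaluated at $f^i\zeta$. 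The two base points are different, and bridging them is not cosmetic: it is the bounded-distortion estimate that occupies a large part of the paper's argument (Lemma~\ref{dist}, the comparison with \cite[Sec.~6]{MorVia93}, and the resulting display \eqref{dist1}). The paper avoids your frame mismatch by splitting at the actual point, $Dfv(z)=A\left(\begin{smallmatrix}1\\0\end{smallmatrix}\right)+B\,e^s(fz)$, and then proving $|A|\cdot\|Df^{i-1}(fz)\left(\begin{smallmatrix}1\\0\end{smallmatrix}\right)\|\approx|\zeta-z|\,\|w_i(\zeta)\|$ via bounded distortion in the strip \eqref{strip}, rather than asserting a literal coefficient-preserving recursion.

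A second omission: for the contraction $|\beta_i|\leq(Cb)^{i-1}$ (in the paper, $|B|\cdot\|Df^{i-1}e^s(fz)\|\leq(Cb)^{i-1}$) to be usable, one must know that $f^jz\in U$ for $1\leq j\leq p-1$, so that (F2) and (F3) apply along the orbit of $z$ (not just along the critical orbit). This is nontrivial because $z$ is not on the local stable manifold of $Q$; the paper devotes a separate Claim to it, using Lemma~\ref{dist}(b) and the geometry of $B(22\tau)\subset U$. Your write-up implicitly assumes the foliation estimates hold along the orbit of $z$ without argument. Finally, the "main obstacle'' you flag ($|\alpha_1(z)|\asymp|\zeta-z|$) is in fact the easy part (it is \cite[Lemma 2.2]{Tak11} plus integration along $\gamma$, giving \eqref{square0}); the genuine obstacle is the distortion estimate and the $f^jz\in U$ claim, neither of which appears in your proposal. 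Once those two ingredients are inserted, your outlines of (a)--(f) are essentially the paper's computations, including the $\lambda_2$-refinement of \eqref{misiurewicz} that the paper also uses tacitly in Lemma~\ref{size2} and in the proof of (b).
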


A proof of this proposition follows the line
\cite{BenCar91,MorVia93,WanYou01} that is now well-understood. We split
$Dfv(z)$ into $\left(\begin{smallmatrix}
1\\ 0\end{smallmatrix}\right)$-component and $e^s(fz)$-component, and
iterate them separately.
The latter is contracted exponentially, and the former copies the
growth of $w_1(\zeta),\ldots,w_{p}(\zeta)$, and so is expanded exponentially.
The contracted component is eventually dominated by the expanded one,
and as a result the desired estimates holds.

%\begin{lemma}\label{hyp}
%If $n\geq1$ and $\gamma$ is a $C^2(b)$-curve such that $\gamma,
%f\gamma,\ldots,f^{n-1}\gamma\subset [-2,2]^2\setminus\Theta$, then:
%\begin{itemize}

%\item[(a)] $f^n\gamma$ is a $C^2(b)$-curve;

%\item[(b)]
%for all $\xi,\eta\in\gamma$, $\displaystyle{\log\frac{\|Df^nv(\xi)\|}{\|Df^nv(\%eta)\|}\leq
%8|f^n\xi-f^n\eta|}$, where $v(z)$ denotes any unit vector tangent to $\gamma$ a%t $z$.
%\end{itemize}
%\end{lemma}
%\begin{proof}
%(a) is
%immediate from the form of our map (\ref{henon}) and the definition of 
%$\Theta$. As for (b), using Lemma \ref{outside}(b), $\|Df\|\leq 9$, $\|D^2f\|\l%eq 5$ on $[-2,2]^2$
%and the fact that $f^i\gamma$ is $C^2(b)$ for every $0\leq i<n$, 
% \begin{align*}
%\|Dfv(f^i\xi)-Dfv(f^i\eta)\|&\leq \|Df(f^i\xi)\|\|v(f^i\xi)-v(f^i\eta)\|+\|Df(f%^i\xi)-Df(f^i\eta)\|\\&\leq 14|f^i\xi-f^i\eta|\leq 14\sigma^{i-n}|f^n\xi-f^n%\eta|.\end{align*}
%Hence we obtain
%\begin{align*}
%\log\frac{\|Df^nv(\xi)\|}{\|Df^nv(\eta)\|}&=\sum_{i=0}^{n-1}\log\frac{\|Df^iv(\%xi)\|}{\|Df^iv(\eta)\|}\leq \frac{1}{\sigma}\sum_{i=0}^{n-1}\|Dfv(f^i\xi)-Df%v(f^i\eta)\|\\
%&\leq\frac{14}{\sigma}\sum_{i=0}^{n-1}|f^i\xi-f^i\eta|\leq
% \frac{14}{\sigma}|f^n\xi-f^n\eta|\sum_{i=0}^{n-1}\sigma^{i-n}\\
%&\leq \frac{14}{\sigma^2-\sigma}|f^n\xi-f^n\eta|<8|f^n\xi-f^n\eta|.
%\end{align*}
%\end{proof}

The proof of Proposition \ref{recovery} will be given after the next

\begin{lemma}\label{dist}
Let $(x(y_0),y_0)\in\mathcal F^s(f\zeta)$, and let $\gamma_0$ be the horizontal segment of the form $\gamma_0=\{(x,y_0)\colon |x-x(y_0)|\leq D_{p-1}(\zeta)\}.$
Then:
\begin{itemize}
\item[(a)] for all $\xi,\eta\in \gamma_0$ and every $1\leq i<p$, $\|Df^{i}(\xi)\left(\begin{smallmatrix}1\\0\end{smallmatrix}\right)\|\leq2\cdot\|Df^{i}(\eta)\left(\begin{smallmatrix}1\\0\end{smallmatrix}\right)\|$;
\item[(b)] for every $1\leq i<p$, $f^i\gamma_0$ is a $C^2(b)$-curve and ${\rm length}(f^{i}\gamma_0)\leq 20\tau$.
\end{itemize}
\end{lemma}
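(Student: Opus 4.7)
The plan is to establish (a) and (b) simultaneously by induction on $i\in\{1,\ldots,p-1\}$. The underlying intuition is that $\gamma_0$ is exceedingly short --- $|\gamma_0|=2D_{p-1}(\zeta)\leq 2\lambda_1^{-(p-1)}$ by Lemma \ref{size2}(a) --- and that its iterates track the reference derivatives $w_j(\zeta)$ closely because the critical orbit $\{f^j\zeta\}_{1\leq j\leq n(\zeta)}$ sits near the stable sides $\alpha_0^{\pm}$ of $R$, safely away from $I(\delta)$. The bounded-distortion output is ultimately powered by the telescoping sum built into the definition \eqref{Theta} of $D_{p-1}(\zeta)$, which is designed to make the distortion sum bounded by $\tau$.

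For the base case $i=1$, I would just use the explicit form of $Df$ coming from \eqref{henon}: on the horizontal $\gamma_0$, $Df(\xi)\binom{1}{0}=\binom{-2a\xi_x}{\pm\sqrt b}$, so $f\gamma_0$ has tangent slopes $\lesssim\sqrt b/|\zeta_x|$ and curvature $\mathcal O(\sqrt b)$ near $f\zeta\in U^+$, hence is a $C^2(b)$-curve. The distortion of $\|Df\cdot\binom{1}{0}\|$ over $\gamma_0$ is obviously within $2$ since $|\gamma_0|$ is exponentially small. For the inductive step, assuming (a) and (b) for all $j<i$, the curve $f^{i-1}\gamma_0$ lies inside a $20\tau$-tube around $f^i\zeta$; since $f^i\zeta$ lies near $\alpha_0^{\pm}$, this tube is contained in $U\subset[-2,2]^2\setminus\Theta$ by \eqref{TAU}, so Lemma \ref{outside}(b) propagates the $C^2(b)$-property one more step to $f^i\gamma_0$.

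The distortion (a) I would obtain by the standard telescoping
$$\log\frac{\|Df^i(\xi)\binom{1}{0}\|}{\|Df^i(\eta)\binom{1}{0}\|} = \sum_{j=0}^{i-1}\Bigl(\log\|Df(f^j\xi)v_j(\xi)\|-\log\|Df(f^j\eta)v_j(\eta)\|\Bigr),$$
with $v_j(\xi)=Df^j(\xi)\binom{1}{0}/\|Df^j(\xi)\binom{1}{0}\|$. Because $f^j\xi$ stays uniformly bounded away from $\{x=0\}$ for $1\leq j<p$ (the only locus where $\log\|Df\cdot(\cdot)\|$ has unbounded gradient) and because the $v_j$ are $b$-horizontal, each summand is controlled by a generic constant times $\mathrm{length}(f^j\gamma_0)/\|w_j(\zeta)\|\cdot\|w_{j+1}(\zeta)\|/\|w_j(\zeta)\|$; invoking the inductive length bound $\mathrm{length}(f^j\gamma_0)\leq C|\gamma_0|\,\|w_j(\zeta)\|$, each summand is at most $C|\gamma_0|\,d_j^{-1}(\zeta)$. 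Summing,
$$\sum_{j=1}^{p-1} C|\gamma_0|\,d_j^{-1}(\zeta) = 2C\,D_{p-1}(\zeta)\sum_{j=1}^{p-1}d_j^{-1}(\zeta)= 2C\tau,$$
which by the choice $\tau\leq\frac{\sigma}{100}\log 2$ in \eqref{TAU} is $\leq \log 2$, closing the factor-$2$ distortion step.

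With (a) in hand at level $i$, (b) follows by integrating:
$$\mathrm{length}(f^i\gamma_0)=\int_{\gamma_0}\|Df^i(\xi)\tbinom{1}{0}\|\,d\xi\leq 2\|w_i(\zeta)\|\cdot|\gamma_0|\leq 4\|w_{p-1}(\zeta)\|\,D_{p-1}(\zeta)\leq 20\tau,$$
using $\|w_j(\zeta)\|$ increasing in $j$ and Lemma \ref{size2}(b) applied at index $p-1$. The $C^2(b)$-part of (b) was already handled in the second paragraph. The main obstacle I foresee is the circular flavor of the argument: the distortion estimate relies on length bounds from earlier steps and the length estimate relies on distortion. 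This is precisely what makes the simultaneous induction on $i$ essential, and what forces the a priori smallness of $\tau$ in \eqref{TAU} so that the summed distortion is absorbed below $\log 2$.
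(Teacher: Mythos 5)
Your overall strategy matches the paper's: a single induction in which the length bounds power the distortion estimate, the distortion estimate closes the length bound, and the sum of lengths is absorbed into $\tau$ through the very design of $D_{p-1}(\zeta)$ in \eqref{Theta}. The paper packages this as an induction on the auxiliary statement \eqref{disteq} rather than simultaneously on (a) and (b), but the two schemes are logically the same. However, two concrete issues should be flagged.

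First, the distortion summand bound is wrong as written. You claim each term in the telescoping sum is controlled by $C\cdot\mathrm{length}(f^j\gamma_0)/\|w_j(\zeta)\|\cdot\|w_{j+1}(\zeta)\|/\|w_j(\zeta)\|$, and after substituting the inductive length bound you assert the result is $C|\gamma_0|d_j^{-1}(\zeta)$. Neither step checks out: substituting gives $C|\gamma_0|\,\|w_{j+1}\|/\|w_j\|$, which is not $C|\gamma_0|\,\|w_j\|^2/\|w_{j+1}\|=C|\gamma_0|d_j^{-1}$. The correct and simpler estimate is that, since the orbit $f^j\gamma_0$ lies near $\alpha_0^{\pm}$ (so uniformly away from $\{x=0\}$) and all the tangent vectors involved are $b$-horizontal, each summand is bounded by a constant times $\mathrm{length}(f^j\gamma_0)$, with no extra $w$-factors. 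The telescoping is then done through the length bound itself: the paper records it as $\mathrm{length}(f^j\gamma_0)\leq 20\,d_{j+1}^{-1}(\zeta)\,D_{p-1}(\zeta)$, after which $\sum_j \mathrm{length}(f^j\gamma_0)\leq 20\,D_{p-1}(\zeta)\sum_i d_i^{-1}(\zeta)\leq 20\tau$, and $\frac{5}{\sigma}\cdot 20\tau\leq\log 2$ by \eqref{TAU}. Your version arrives at a sum of the right shape, but through a miscalculation, and with a generic $C$ in place of the explicit $5/\sigma$ that \eqref{TAU} is calibrated against.

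Second, your inductive length bound $\mathrm{length}(f^j\gamma_0)\leq C|\gamma_0|\,\|w_j(\zeta)\|$ (the index should be $j+1$, since $w_1=\left(\begin{smallmatrix}1\\0\end{smallmatrix}\right)$ and $\gamma_0$ starts near $f\zeta$) glosses over a genuine step: $\gamma_0$ is centered at $(x(y_0),y_0)\in\mathcal F^s(f\zeta)$, which is in general \emph{not} $f\zeta$, while the reference derivatives $w_{j+1}(\zeta)$ are taken at $f\zeta$. The distortion statement (a) only compares points inside $\gamma_0$, so it does not by itself let you replace $\|Df^{j}(x(y_0),y_0)\left(\begin{smallmatrix}1\\0\end{smallmatrix}\right)\|$ by $\|w_{j+1}(\zeta)\|$. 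The paper closes this gap by invoking a comparison along the stable leaf from \cite[Section 6]{MorVia93}; some such ingredient is needed and is missing from your sketch.
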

\begin{proof}
 These estimates would hold
if for all $0\leq j <p-1$ we have
\begin{equation}\label{disteq}
f^j\gamma_0\subset[-2,2]^2\setminus\Theta,\ \ \  {\rm length}(f^j\gamma_0)\leq 
20d_{j+1}^{-1}(\zeta)D_{p-1}(\zeta)\leq 20\tau.
\end{equation}
Indeed, let $1\leq i<p$. Summing the inequality in \eqref{disteq} over all $j=0,1,\ldots,i-1$ yields

\begin{align*}
\log\frac{\|Df^i(\xi)\left(\begin{smallmatrix}1\\0\end{smallmatrix}\right)\|}{\|Df^i(\eta)\left(\begin{smallmatrix}1\\0\end{smallmatrix}\right)\|}&=\sum_{j=0}^{i-1}\log\frac{\|Df^j(\xi)\left(\begin{smallmatrix}1\\0\end{smallmatrix}\right)\|}{\|Df^j(\eta)\left(\begin{smallmatrix}1\\0\end{smallmatrix}\right)\|}\leq \frac{1}{\sigma}\sum_{j=0}^{i-1}\|Df(f^j\xi)\left(\begin{smallmatrix}1\\0\end{smallmatrix}\right)-Df(f^j\eta)\left(\begin{smallmatrix}1\\0\end{smallmatrix}\right)\|\\
&\leq\frac{5}{\sigma}\sum_{j=0}^{i-1}{\rm length}(f^j\gamma_0)\leq
 \frac{100\tau}{\sigma}\leq\log2,
\end{align*}
where the last inequality follows from the second condition in \eqref{TAU}.

We prove (\ref{disteq}) by induction on $j$. It is immediate to
check it for $j=0$. Let $k>0$ and assume
(\ref{disteq}) for every $0\leq j < k$.
Then, from the form of our map \eqref{henon}, 
$f^{k}\gamma_0$ is a $C^2(b)$-curve. %and 
%${\rm length}(f^k\gamma_0)\leq 100\tau$.
Summing the inequality in \eqref{disteq} over
all $0\leq j<k$ and then using \eqref{TAU} yields
 $\Vert
Df^k(\xi)\left(\begin{smallmatrix}
1\\
0\end{smallmatrix}\right)\Vert\leq 2
\cdot\Vert Df^k(\eta)\left(\begin{smallmatrix}
1\\
0\end{smallmatrix}\right)\Vert$ 
for all $\xi,\eta\in \gamma_0$. 
By a result of \cite[Section 6]{MorVia93}, $\Vert
Df^k(z_0)\left(\begin{smallmatrix}
1\\
0\end{smallmatrix}\right)\Vert\leq 2
\cdot\Vert Df^k(f\zeta)\left(\begin{smallmatrix}
1\\
0\end{smallmatrix}\right)\Vert$,
where $z_0=(x(y_0),y_0)$.
 Hence
\begin{align*}
{\rm length}(f^{k}\gamma_0)&\leq 4 \|w_{k+1}(\zeta)\| D_{p-1}(\zeta) = 4
d_{k+1}^{-1}(\zeta)D_{p-1}(\zeta)\frac{\|w_{k+2}(\zeta)\|}{\|w_{k+1}(\zeta)\|}\\
&\leq20
d_{k+1}^{-1}(\zeta)D_{p-1}(\zeta)\leq  20\tau.
\end{align*}
Since $k<p\leq n(\zeta)$, $f^k\zeta\in B(\tau)$ and thus $f^{k}\gamma_0\subset[-2,2]^2\setminus\Theta$ holds. 
Hence \eqref{disteq} holds for $j=k$.
\end{proof}

\medskip

\noindent{\it Proof of Proposition \ref{recovery}.}
Split $$Dfv(z)=A_0\cdot
\left(\begin{smallmatrix}1\\0\end{smallmatrix}\right)+
B_0\cdot e^s(f\zeta).$$
 By \cite[Lemma 2.2]{Tak11},
\begin{equation}\label{quadratic}|A_0|\approx|\zeta-z|\ \ \text{and} \ \ |B_0|\leq Cb^{\frac{1}{4}}.\end{equation}
For a point $r$ near $f\zeta$, write
$r=f\zeta+\xi(r)w_1(\zeta)^\top+\eta(r)e^s(f\zeta)^\top,$ where $\top$
denotes the transpose. The integrations of the inequalities in
(\ref{quadratic}) along $\gamma$ from $\zeta$ to $z$ give
\begin{equation}\label{dist4}
|\xi(fz)|\approx|\zeta-z|^2\ \ \text{and} \ \ |\eta(fz)|\leq Cb^{\frac{1}{4}}|\zeta-z|.\end{equation} Write $fz=(x_0,y_0)$ and $f\zeta=(x_1,y_1)$. Since $f\gamma$
is tangent to $\mathcal F^s(f\zeta)$ at $f\zeta$ we have
$\frac{d\xi(x(y),y)}{dy}(y_1)=0$. (F1) gives
$\left|\frac{d^2\xi(x(y),y)}{dy^2}\right|\leq C.$ %\sqrt{b}.$$
Then
$$|\xi(x(y_0),y_0)|\leq C|y_0-y_1|^2.$$
(\ref{dist4}) gives
$$|y_0-y_1|^2\leq
C|\eta(fz)|^2 \leq C\sqrt{b}|\zeta-z|^2.$$  Since
$|x_0-x(y_0)|=|\xi(fz)-\xi(x(y_0),y_0)|$, the above two inequalities and (\ref{dist4}) yield
\begin{equation}\label{square0}
|x_0-x(y_0)|\approx|\zeta-z|^2.
\end{equation}
Using \eqref{bddperiod} \eqref{square0} and Lemma~\ref{size2}(a) we have
\begin{equation}\label{squaredistance}
|\zeta-z|^2\leq
C\cdot D_{p-1}(\zeta)\leq C\cdot\lambda_1^{-p}
\end{equation}
Taking logs, rearranging the results and then shrinking $\delta$ if necessary we get
$$p\log\lambda_1\leq \log C-2\log|\zeta-z|\leq-3\log|\zeta-z|,$$
which yields the upper estimate in (a).
 For the lower one, using
\eqref{bddperiod} \eqref{square0} and Lemma~\ref{size2}(a) again we have
 \begin{equation}\label{squaredistance2}|\zeta-z|^2\geq
C\cdot D_{p}(\zeta)\geq C\cdot (\lambda_2+\varepsilon/2)^{-p}.\end{equation}
Taking logs of both sides, rearranging the results and then shrinking $\delta$ if necessary we get
$$-2\log|\zeta-z|\leq p\log(\lambda_2+\varepsilon/2)+\log C\leq p\log 5.$$
The last inequality is due to the fact that
the lower bound of $p$ becomes larger as $\delta$ gets smaller. This completes the proof of (a).

As for (b),
\eqref{misiurewicz} and the definition of $q$ give
$$\lambda_1^{q-1}\leq \|w_{q}(\zeta)\|<|\zeta-z|^{-\beta}.$$
Taking logs of both sides and then rearranging the result
yields the upper estimate in (b). For the lower one,
using \eqref{misiurewicz} and the definition of $q$ again we have
$$\lambda_2^q\geq\|w_{q+1}(\zeta)\|\geq|\zeta-z|^{-\beta}.$$
Taking logs of both sides yields the lower estimate in (b).
This completes the proof of (b).

Before proceeding further, we establish a bounded distortion in the strip 
\begin{equation}\label{strip}\left\{(x,y)\colon
|x-x(y)|\leq D_{p-1}(\zeta),y\in J\right\}.\end{equation}
Take arbitrary two points
$\xi_1$, $\xi_2$ in the strip \eqref{strip}, and denote by
$\eta_{\sigma}$ the point of $\mathcal
F^s(f\zeta)$ with the same $y$-coordinate as that of $\xi_\sigma$
($\sigma=1,2$).
By the result of \cite[Section 6]{MorVia93}, $\Vert
Df^i(\eta_1)\left(\begin{smallmatrix}
1\\
0\end{smallmatrix}\right)\Vert\leq 2\cdot\Vert Df^i(\eta_2)\left(\begin{smallmatrix}
1\\
0\end{smallmatrix}\right)\Vert$ holds
for every $1\leq i< p$.
This and Lemma \ref{dist}(a) yield
\begin{equation}\label{dist1}\Vert Df^i(\xi_1)\left(\begin{smallmatrix}
1\\
0\end{smallmatrix}\right)\Vert\leq 8\cdot\Vert
Df^i(\xi_2)\left(\begin{smallmatrix}
1\\
0\end{smallmatrix}\right)\Vert\ \ 1\leq\forall i< p.\end{equation}

We now move on to proving the rest of the items of Proposition \ref{recovery}. 
Consider another splitting
$$Dfv(z)=A\cdot
\left(\begin{smallmatrix}1\\0\end{smallmatrix}\right)+B\cdot e^s(fz),$$
and write $$e^s(fz)=\begin{pmatrix}\cos\theta(z)\\\sin\theta(z)
\end{pmatrix}\ \
\text{and} \ \ \rho\cdot Dfv(z)=\begin{pmatrix}\cos\psi\\\sin\psi
\end{pmatrix},$$ where
$\theta,\psi\in[0,\pi)$ and $\rho>0$ is the normalizing constant.
(\ref{quadratic}) implies $|\theta(\zeta)-\psi| \approx
\rho^{-1}|\zeta-z| \gg|\zeta-z|.$ (F1) gives
$|\theta(\zeta)-\theta(z)|\leq
C|\zeta-z|\ll|\theta(\zeta)-\psi|,$ which implies
$|\theta(z)-\psi|\approx |\theta(\zeta)-\psi|.$ Hence
\begin{equation}\label{recoveq}
|A|\approx
\rho|\theta(z)-\psi|\approx\rho|\theta(\zeta)-\psi|\approx
|\zeta-z|.\end{equation}
Using \eqref{dist1} \eqref{recoveq} we have
\begin{equation}\label{Aexpand}
|A|\cdot\Vert Df^{i-1}(fz)\left(\begin{smallmatrix}
1\\
0\end{smallmatrix}\right)\Vert
\approx |\zeta-z| \cdot \|w_{i}(\zeta)\|.\end{equation}

If $i>q$, then we have
 \begin{equation}\label{Aexpan}
|A|\cdot\Vert Df^{i-1}(fz)\left(\begin{smallmatrix}
1\\
0\end{smallmatrix}\right)\Vert
\approx |\zeta-z| \cdot \|w_{i}(\zeta)\|>
|\zeta-z|^{1-\beta},\end{equation} 
and
\begin{equation}\label{Bcontract}
|B|\cdot\Vert Df^{i-1}e^s(fz)\Vert\leq
(Cb)^{i-1}\leq (Cb)^q \leq
|\zeta-z|^{\frac{3}{2}}.
\end{equation}
The  inequality in \eqref{Aexpan} follows from the definition of $q$. 
The last inequality in \eqref{Bcontract} follows from the
lower estimate of $q$, the definition of $\beta$ and Proposition \ref{recovery}(b).
For the first inequality in \eqref{Bcontract} we have used the invariance (F2) of the stable foliation $\mathcal F^s$ and the contraction in (F3) for the iterates of $z$. This argument is 
justified by the next claim.
Recall that $U$ is the domain where $e^s$ makes sense (See Sect.\ref{critical}).
\begin{claim}
For every $1\leq i\leq p-1$, $f^{i}z\in U$.
\end{claim}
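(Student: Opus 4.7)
The strategy is to compare the orbit of $z$ with the critical orbit $f\zeta, f^2\zeta, \ldots$ by inserting an auxiliary reference orbit on the stable leaf $\mathcal{F}^s(f\zeta)$. Writing $fz = (x_0, y_0)$, set $z_0 = (x(y_0), y_0) \in \mathcal{F}^s(f\zeta)$; then $fz$ and $z_0$ both lie on the horizontal segment $\gamma_0$ of Lemma \ref{dist}, so $f^i z$ and $f^{i-1} z_0$ lie on the common curve $f^{i-1}\gamma_0$. By Lemma \ref{dist}(b), $f^{i-1}\gamma_0$ is a $C^2(b)$-curve of length at most $20\tau$ for every $2 \leq i \leq p$. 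Combined with $B(22\tau) \subset U$ from \eqref{TAU}, it suffices to prove (a) $fz \in U$, and (b) $f^{i-1} z_0 \in B(2\tau)$ for every $2 \leq i \leq p-1$.

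Part (a) is handled directly: $|fz - f\zeta| \leq D_{p-1}(\zeta) + |y_0 - y_1|$, with $|y_0 - y_1| \leq C b^{1/4} |\zeta - z|$ exactly as derived above \eqref{square0}; since $f\zeta \in U^+$ and $U^+$ is open, shrinking $\delta$ and $b$ places $fz$ in $U^+ \subset U$. For part (b) I would prove by induction on $j = 0, 1, \ldots, p-2$ the stronger statement that $f^j z_0 \in \mathcal{F}^s(f^{j+1}\zeta)$ and $|f^j z_0 - f^{j+1}\zeta| \leq (Cb)^j |z_0 - f\zeta|$. The base case is immediate from the definition of $z_0$. For the inductive step, the critical orbit supplies the needed hypothesis for (F2): since $p \leq n(\zeta)$ implies $f^{j+1}\zeta, f^{j+2}\zeta \in B(\tau) \subset U$, (F2) yields $f\mathcal{F}^s(f^{j+1}\zeta) \subset \mathcal{F}^s(f^{j+2}\zeta)$, and the inductive hypothesis $f^j z_0 \in \mathcal{F}^s(f^{j+1}\zeta)$ forces $f^{j+1} z_0 \in \mathcal{F}^s(f^{j+2}\zeta)$. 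The distance estimate upgrades by a factor of $Cb$ upon integrating the leafwise contraction $\|Df\, e^s\| \leq Cb$ from (F3) along $\mathcal{F}^s(f^{j+1}\zeta)$. For $b$ small, $(Cb)^j |z_0 - f\zeta| \leq (Cb)^j \cdot C\delta \leq \tau$, so $f^j z_0 \in B(2\tau)$ as required.

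The main obstacle is the apparent circularity in invoking (F2), whose hypothesis is that its base point lie in $U$ --- precisely what the claim is asserting. The key to sidestepping this is to apply (F2) at the critical orbit point $f^{j+1}\zeta$, whose membership in $B(\tau) \subset U$ is given unconditionally by $p \leq n(\zeta)$, rather than at the auxiliary iterate $f^j z_0$; the leaf through $f^j z_0$ is identified with $\mathcal{F}^s(f^{j+1}\zeta)$ via the previous inductive step, not via a fresh invocation of (F2). The only remaining subtlety is the accumulation of constants in the distance estimate, but this stays safely below $\tau$ because the geometric factor $(Cb)^j$ dominates once $b$ is taken small enough.
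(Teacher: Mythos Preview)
Your proof is correct but takes a genuinely different route from the paper's. Both arguments end the same way---combining Lemma~\ref{dist}(b) with $B(22\tau)\subset U$ from \eqref{TAU}---but differ in how they locate a reference point in $B(\tau)$ or $B(2\tau)$.

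The paper's proof is geometric: for $i\geq 2$ it uses that $f^iz\in R$, hence lies to the right of $W^s_{\rm loc}(Q)$, while $f^i\zeta$ lies on or to the left of $W^s_{\rm loc}(Q)$ because $\zeta\in S$. The straight segment joining them therefore meets $W^s_{\rm loc}(Q)$ at some point $y$; since both endpoints are near the $x$-axis (as $b\ll1$), $y\in B(\tau)$, and $|f^iz-f^i\zeta|\leq 21\tau$ then gives $f^iz\in B(22\tau)$. Your approach instead runs an explicit induction along the stable foliation, contracting $z_0$ toward the critical orbit via (F2)--(F3) to place $f^{i-1}z_0\in B(2\tau)$ directly.

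What each buys: the paper's argument is shorter and exploits the specific position of the critical orbit relative to $W^s_{\rm loc}(Q)$, but it tacitly uses $f^iz\in R$---not a stated hypothesis of Proposition~\ref{recovery}, though valid in every application since $z$ will always lie in $\Omega$. Your argument avoids that unstated assumption entirely, and in the process makes precise a step the paper glosses over (Lemma~\ref{dist}(b) literally controls only $|f^iz-f^{i-1}z_0|$, not $|f^iz-f^i\zeta|$, since $f\zeta\notin\gamma_0$). One minor point: your appeal to ``$p\leq n(\zeta)$ implies $f^{j+1}\zeta\in B(\tau)$'' conflates $U$ with $B(\tau)$; the definition of $n(\zeta)$ only gives membership in $U$, but the stronger inclusion $f^{j+1}\zeta\in B(\tau)$ is indeed what the paper uses elsewhere (e.g.\ in the proof of Lemma~\ref{dist}) and follows from the fact that critical orbits shadow the stable sides of $R$.
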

\begin{proof}

The inclusion for $i=1$ holds provided $\delta$ is sufficiently small. 
Let $i\geq2$.
Since $f^iz\in R$,  $f^iz$ is at the right of $W^s_{\rm loc}(Q)$.
On the other hand, since $\zeta\in S$,
$f^i\zeta\in W^s_{\rm loc}(Q)$ or else it is at the left of $W^s_{\rm loc}(Q)$. 
Lemma \ref{dist}(b) implies $|f^i\zeta-f^iz|\leq 21\tau$.  
Let $\ell$ denote the straight segment connecting $f^iz$ and $f^i\zeta$. Let $y$ denote the point of intersection
between $\ell$ and $W^s_{\rm loc}(Q)$. Since $b\ll1$, $f^iz$ and $f^i\zeta$ are near the $x$-axis, and so $y\in B(\tau)$ holds.
Hence $f^iz\in B(22\tau)\subset U.$
\end{proof}
(\ref{Aexpand}) (\ref{Bcontract}) yield
$$\|Df^iv(z)\|\approx|A|\cdot\Vert Df^{i-1}(fz)\left(\begin{smallmatrix}
1\\
0\end{smallmatrix}\right)\Vert\approx |\zeta-z| \cdot \|w_{i}(\zeta)\|,$$
and hence (c).

\medskip

Let $i\leq q$.
The definition of $q$ and $\|w_i(\zeta)\|\leq \|w_q(\zeta)\|$ give
$$|A|\cdot\Vert Df^{i-1}(fz)\Vert\leq |\zeta-z| \cdot \|w_{i}(\zeta)\|\leq
 |\zeta-z| \cdot \|w_{q}(\zeta)\|\leq  |\zeta-z|^{1-\beta}\ll1.$$ This and 
 $ |B|\cdot\Vert Df^{i-1}e^s(fz)\Vert\leq
(Cb)^{i-1}$
yield (d).

As for (e) \eqref{squaredistance} gives $|\zeta-z|^{-1}\geq C\lambda_1^{\frac{p}{2}}$.
\eqref{square0} and the first inequality of 
Lemma \ref{size2}(b) give
$\|w_p(\zeta)\|\cdot |\zeta-z|^2\geq
C\|w_p(\zeta)\|\cdot D_p(\zeta)\geq C\tau.$ Hence
$$\Vert Df^pv(z)\Vert\geq
 C\|w_{p}(\zeta)\|\cdot|\zeta-z|\geq
C\tau|\zeta-z|^{-1}\geq C\tau\lambda_1^{\frac{p}{2}}\geq(4-\varepsilon)^{\frac{p}{2}},$$
where the last inequality holds provided $\delta$ is sufficiently small.
(f) follows from (c).
%$\frac{\|Df^jv(z)\|}{\|Df^jv(z)\|}\approx
%\frac{\|w_j(\zeta)\|}{\|w_i(\zeta)\|}$ for all $q\leq i<j\leq p$, which in turn% follows from (c).
\qed
\medskip

\subsection{Unstable leaves}\label{uleaf}
In order to use Proposition \ref{recovery} for a global analysis of the dynamics on $\Omega$,
 we have to find critical points in a suitable position for each return to $I(\delta)$.
To this end  we show that part of $\Omega$ is contained in the union of one-dimensional leaves, which are accumulated by sufficiently long $C^2(b)$-curves
in $W^u$.

Let
$\tilde\Gamma^u$ denote the collection of $C^2(b)$-curves in $W^u$ with endpoints in the stable sides of $\Theta$. Let
$$\Gamma^u=\{\gamma^u\colon\text{$\gamma^u$ is the pointwise limit of
a sequence in $\tilde\Gamma^u$}\}.$$
Any curve in $\Gamma^u$ is called an {\it unstable leaf}.
By the $C^2(b)$-property, the pointwise
convergence is equivalent to the uniform convergence. Since two
distinct curves in $\tilde\Gamma^u$ do not intersect each other,
the uniform convergence is equivalent to the $C^1$ convergence.
Hence, any unstable leaf is a $C^1$ curve with endpoints in the stable sides of $\Theta$
and the slopes of its
tangent directions are $\leq\sqrt{b}$.
Let $\mathcal W^u$ denote the union of all unstable leaves.
\begin{lemma}\label{incl}
$\Theta\cap \Omega\subset \mathcal W^u$.
\end{lemma}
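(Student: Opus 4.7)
The plan is to exploit the nested structure of the rectangles $\Delta_k=\Theta\cap f^k R$ together with Lemma \ref{geo} to build an unstable leaf through each point of $\Theta\cap\Omega$. First, I would show that every $z\in\Theta\cap\Omega$ lies in every $\Delta_k$: since $\Omega\subset R$ and $\Omega$ is $f$-invariant, $f^{-k}z\in\Omega\subset R$ for all $k\geq0$, so $z=f^k(f^{-k}z)\in f^kR$, and combined with $z\in\Theta$ this gives $z\in\Delta_k$. Since the $2^k$ components of $\Delta_k$ are rectangles and $\Delta_k\supset\Delta_{k+1}$, I can select a nested sequence of components $\mathcal Q_0(z)\supset\mathcal Q_1(z)\supset\cdots$ with $z\in\mathcal Q_k(z)$ for every $k$ (making an arbitrary choice of adjacent component if $z$ happens to lie on a boundary).

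Next, I would identify the two unstable sides $\gamma_k^+,\gamma_k^-$ of $\mathcal Q_k(z)$ as elements of $\tilde\Gamma^u$. Indeed, these sides are arcs of $W^u$ (the unstable sides of $f^kR$ are in $f^kW^u=W^u$), they lie in $\Theta$, they have endpoints on the stable sides $\alpha_1^\pm$ of $\Theta$, and by Lemma \ref{c1} they are $C^2(b)$-curves. Because they are $C^2(b)$, each $\gamma_k^\pm$ can be represented as the graph $y=g_k^\pm(x)$ of a function over a horizontal interval $I\subset\mathbb R$ (the projection of $\Theta$ onto the $x$-axis, trimmed slightly so that all curves are defined on the same $I$), with $|(g_k^\pm)'|\leq b^{1/4}$ and $|(g_k^\pm)''|\leq b^{1/4}$.

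By Arzel\`a--Ascoli the sequence $\{g_k^+\}_{k\geq0}$ admits a subsequence converging uniformly on $I$ to a $C^1$ function $g^u$, whose graph is by definition an element $\gamma^u\in\Gamma^u$. Moreover, Lemma \ref{geo} yields
\begin{equation*}
\sup_{x\in I}|g_k^+(x)-g_k^-(x)|\leq C\cdot\mathrm{dist}_H(\gamma_k^+,\gamma_k^-)=\mathcal O(b^{k/2})\xrightarrow[k\to\infty]{}0,
\end{equation*}
so the limit is independent of the choice of side. Writing $z=(z_x,z_y)$, the inclusion $z\in\mathcal Q_k(z)$ forces $g_k^-(z_x)\leq z_y\leq g_k^+(z_x)$ (up to a sign convention for which side is upper), hence $z_y=g^u(z_x)$ in the limit, and $z\in\gamma^u\subset\mathcal W^u$.

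The only point requiring care is the Arzel\`a--Ascoli step: one must verify that all the $\gamma_k^\pm$ can indeed be placed in a common function space. This is handled by the uniform slope bound $b^{1/4}$ on $C^2(b)$-curves and the fact that every curve in $\tilde\Gamma^u$ stretches across $\Theta$ from $\alpha_1^-$ to $\alpha_1^+$, which are near-vertical stable segments; consequently the domains of the corresponding graphs all contain a fixed compact subinterval of $I$ on which the uniform $C^{1,1}$ bounds yield precompactness in the $C^1$ topology. Once this is set up, the rest of the argument is a routine nested-intersection/Ascoli limit, and the rapid exponential closing rate $b^{k/2}$ from Lemma \ref{geo} ensures that the choice of subsequence is immaterial.
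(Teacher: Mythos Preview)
Your proposal is correct and follows essentially the same route as the paper: show $z\in\Delta_k$, take the nested components, and use Lemma~\ref{geo} to make their unstable sides converge to an element of $\Gamma^u$ through $z$. The paper's version is terser---it simply notes that $z\in\Delta_k$ for arbitrarily large $k$ and hence is accumulated by curves in $\tilde\Gamma^u$---whereas you spell out the graph/Arzel\`a--Ascoli details; your observation that $z\in\Delta_k$ for \emph{all} $k$ (directly from $f$-invariance of $\Omega\subset R$) is in fact slightly cleaner than the paper's detour through the condition $f^{-k}z\notin I(\delta)$.
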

\begin{proof}
Let $z\in \Theta\cap \Omega$.
Then there exists an arbitrarily large integer $k$ such that
$f^{-k}z\notin I(\delta)$. Since $z\in \Omega$, $f^{-k}z\in R$. Hence, $z\in\Delta_k$ holds. Since $k$ can be made arbitrarily large, from Lemma \ref{geo} $z$ is accumulated by curves in
$\tilde\Gamma^u$. Hence $z$ is contained in an unstable leaf.
\end{proof}

\subsection{Bound/free structure}\label{bf}
Let $z\in \Omega\cap I(\delta)$.
To the forward orbit of $z$ we associate inductively a sequence of integers
$0=:n_0<n_0+p_0<
n_1<n_1+p_1<n_2<n_2+p_2<\cdots$,
and then introduce useful terminologies along the way.

\begin{lemma}\label{tangent}
If $z\in \Omega\cap I(\delta)$, then there exists a critical point $\zeta$ and
a $C^2(b)$-curve $\gamma$ which contains $z$, $\zeta$ and is tangent to $E^u(z)$, $E^u(\zeta)$.
\end{lemma}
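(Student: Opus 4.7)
The plan is to apply Lemma~\ref{incl} to place $z$ on an unstable leaf, then obtain the critical point $\zeta$ as a limit of the critical points of the approximating $C^2(b)$-curves in $\tilde\Gamma^u$, and finally take the curve $\gamma$ to be that unstable leaf itself.

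First, since $z\in \Omega\cap I(\delta)\subset \Theta\cap\Omega$, Lemma~\ref{incl} gives $z\in\mathcal W^u$, so $z$ lies on some unstable leaf $\gamma^u\in\Gamma^u$. By the definition of $\Gamma^u$ there is a sequence $(\gamma_n)_{n\ge1}$ of curves in $\tilde\Gamma^u$ (i.e., components of $\Theta\cap W^u$) converging to $\gamma^u$ pointwise, and hence in $C^1$ by the uniform $C^2(b)$-bounds recalled in Sect.\ref{uleaf}. By the result from \cite{Tak12} cited after the definition of critical points, each $\gamma_n$ carries a unique critical point $\zeta_n\in S\cap W^u$.

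Since $\overline S$ is compact, pass to a subsequence so that $\zeta_n\to\zeta$. The $C^1$-convergence $\gamma_n\to\gamma^u$ together with $\zeta_n\in\gamma_n$ then forces $\zeta\in\gamma^u$ and $T_{\zeta_n}\gamma_n\to T_\zeta\gamma^u$. The critical-point tangency $Df(\zeta_n)\,T_{\zeta_n}\gamma_n\parallel e^s(f\zeta_n)$ passes to the limit using the $C^1$-smoothness of $Df$ and $f$ and property (F3) for the stable foliation, yielding $Df(\zeta)\,T_\zeta\gamma^u\parallel e^s(f\zeta)$. Since all $f\zeta_n$ belong to the open set $U^+$ and in fact accumulate near $\alpha_0^+$, the openness of $U^+$ gives $f\zeta\in U^+$, so $\zeta$ satisfies the critical-point condition. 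Now take $\gamma=\gamma^u$: it is a $C^1$-limit of $C^2(b)$-curves, so the slope bound $\le b^{1/4}$ is preserved at every point, and a standard Arzel\`a--Ascoli argument on the tangent maps preserves the curvature bound $\le b^{1/4}$; thus $\gamma$ is a $C^2(b)$-curve containing both $z$ and $\zeta$, and being an unstable leaf it is tangent to $E^u$ at each of its points, in particular at $z$ and $\zeta$.

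The main obstacle is the passage to the limit in the critical-point definition, which strictly requires $\zeta\in W^u$ and not merely $\zeta\in\overline{W^u}$. Because $W^u$ is only an immersed manifold, the limit of points on deeper and deeper components of $\Theta\cap W^u$ need not lie on $W^u$ itself. One must use the discreteness of the tangencies between $W^u$ and $\mathcal F^s$ inside $S$ (guaranteed by \cite{Tak12}) together with the fact that the $\gamma_n$ approach $\gamma^u$ only in the ``perpendicular'' direction to $W^u$, to conclude that the limiting tangency $\zeta$ inherits the critical-point structure rather than degenerating into an accumulation point. A parallel subtlety is checking that $f\zeta$ stays strictly inside $U^+$, which relies on the geometric fact that the images of critical points stay close to the stable side $\alpha_0^+$, uniformly in $n$.
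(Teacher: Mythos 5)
Your approach is essentially the paper's (the paper's proof is a single sentence: by Lemma~\ref{incl}, $z$ is accumulated by curves in $\tilde\Gamma^u$, each carrying a critical point, ``hence the claim follows''), and your limit argument fills in what is clearly the intended reasoning. Two caveats, both of which you already sense. First, a $C^1$-limit of $C^2(b)$-curves is a priori only $C^{1,1}$: the Arzel\`a--Ascoli step you invoke controls the Lipschitz constant of the tangent field but does not by itself produce a second derivative, so concluding that $\gamma^u$ is an honest $C^2(b)$-curve requires an extra input (for instance, that $\gamma^u$ is a leaf of the unstable lamination of a $C^2$ diffeomorphism and hence itself $C^2$), not just ``preservation'' of the curvature bound under $C^1$-convergence.

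Second, the difficulty you flag at the end is real and is not addressed by the paper's terse proof: when $\gamma^u$ is a genuine accumulation of components of $\Theta\cap W^u$ rather than one of them, the limiting tangency $\zeta\in\gamma^u$ need not lie on $W^u$, so it fails the paper's literal definition of critical point. In context this is most plausibly read as a harmless abuse of terminology, because the binding machinery (Proposition~\ref{recovery} and Sect.~\ref{bf}) only uses the forward-orbit data of $\zeta$ --- $f\zeta\in U^+$, the vectors $w_i(\zeta)$ and the escape time $n(\zeta)$ --- none of which depends on $\zeta\in W^u$; these pass to the limit by continuity. Relatedly, your inference ``$f\zeta_n\in U^+$ open, hence $f\zeta\in U^+$'' is not valid as stated (openness does not pass to limits); one should argue instead that the $f\zeta_n$ stay in a compact subset of $U^+$ uniformly close to $\alpha_0^+$, which is the geometric fact you mention in your last sentence.
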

\begin{proof}
Since $z\in \Omega\cap I(\delta)$, by Lemma \ref{incl} it is accumulated by $C^2(b)$-curves
in $W^u$ with endpoints in the stable sides of $\Theta$, each of which
admits a critical points. Hence the claim follows.
\end{proof}

%\begin{lemma}\label{emit}
%Let $\zeta$ be a critical point non escaping up to time $n$ and $f^{n+1}\zeta\notin U$. If $z\in I(\delta)$ and $p(z)>n$, then $f^{n+1}z\notin R$. 
%\end{lemma}
%\begin{proof}
%By Lemma \ref{dist}(b) we have $|f^nz-f^n\zeta|\leq 20\tau+(Cb)^{n-1}$,
%and thus $|f^{n+1}z-f^{n+1}\zeta|\leq 5(\tau+(Cb)^{n-1}).$
%Since $\tau\ll1$, this implies $f^{n+1}z\notin R$. \end{proof}

Given $n_i$ with $f^{n_i}z\in I(\delta)$,
in view of Lemma \ref{tangent} take a critical point $\zeta$ and a $C^2(b)$-curve $\gamma$
in $I(\delta)$
which contains $f^{n_i}z$, $\zeta$ and is tangent to $E^u(f^{n_i}z)$, $E^u(\zeta)$.
Let
$p_i=p(f^{n_i}z)$ denote the bound period of $f^{n_i}z$ given by the definition in
Sect.\ref{recover} applied to $(\zeta,\gamma)$.

We claim that $p_i$ makes sense. This is clear if $\zeta=\zeta_0$.
Consider the case $\zeta\neq\zeta_0$. Then $n(\zeta)<+\infty$.
If $p_i$ does not make sense, then $f^{n_i}z$ comes too close to $\zeta$, so that
$|\zeta-f^{n_i}z|\leq C\cdot D_{n(\zeta)}(\zeta)$ for some $C>0$.
The estimate in Lemma \ref{dist}(b) implies 
$|f^{n(\zeta)+1}z-f^{n(\zeta)+1}\zeta|\leq 21\tau.$
Since $f^{n(\zeta)+1}\zeta\notin U$ and $B(22\tau)\subset U$,
$f^{n(\zeta)+1}z\notin R$ holds. This yields a contradiction to the assumption that $z\in\Omega$.
Hence the claim follows.

Let $n_{i+1}$ denote the next return time of the orbit of $z$ to $I(\delta)$
after $n_i+p_i$. Then Lemma \ref{tangent} applies to $f^{n_{i+1}}z$.
A recursive argument allows us to decompose the forward orbit of $z$ into segments corresponding to time intervals $(n_i,n_i+p_i)$ and $[n_i+p_i,n_{i+1}]$, during which we describe the points in the orbit of $z$ as being ``bound" and ``free" states respectively. Each $n_i$ is called a {\it free return time.}

Let us record the following derivative estimates: \begin{equation}\label{delest}
\|Df^{p_i}|E^u(f^{n_i}z)\|\geq(4-\varepsilon)^{\frac{p_i}{2}}\ \ \text{and}\ \
\|Df^{n_{i+1}-n_i-p_i}|E^u(f^{n_i+p_i}z)\|\geq\sigma^{n_{i+1}-n_i-p_i}.\end{equation}
The first one is a consequence of Proposition \ref{recovery}.
The second one follows from Lemma \ref{outside} and the fact that 
$E^u(f^{n_i+p_i}z)$ is spanned by a $b$-horizontal vector, which in turn follows
from Proposition \ref{recovery}(f).

\section{Symbolic coding}
In this section we show that $f|\Omega$ is semi-conjugate to the full shift on two symbols. As a corollary we obtain an upper semi-continuity of entropy.
In Sect.\ref{US} we give precise statements of main results in this section.
In Sect.\ref{relevant} we introduce some relevant definitions, and in Sect.\ref{pfprop} we construct the semi-conjugacy.
\subsection{Upper semi-continuity of entropy}\label{US}
The region $R\setminus {\rm int}S$ consists of two
rectangles, intersecting each other only at $\zeta_0$. Let $R_0$ denote
the one at the left of $\zeta_0$ and let $R_1$ denote the one at
the right.
Let $\Sigma_2=\{0,1\}^{\mathbb Z}$ denote the shift space endowed with the product topology of the discrete topology in $\{0,1\}$. 
Let 
$$K=\{z\in\mathbb R^2\colon\{f^nz\}_{n\in\mathbb Z}\text{ is bounded}\}.$$
Since any point outside of $R$ goes to infinity under positive or negative iteration,
$K=\bigcap_{n\in\mathbb Z}f^nR$.
Let $\pi\colon \Sigma_2\to K$
denote the coding map, namely, for $\omega=(\omega_n)_{n\in\mathbb
Z}\in\Sigma_2$ let
$$\pi(\omega)=\{x\in K\colon f^nx\in R_{\omega_n}\ \forall
n\in\mathbb Z\}.$$
Let $\sigma\colon\Sigma_2\circlearrowleft$ denote the left shift.

\begin{prop}\label{iota}
For any $\omega\in\Sigma_2$, $\pi(\omega)$ is a singleton. In addition, $\pi$ is surjective, continuous, 1-1 except on
$\bigcup_{i=-\infty}^\infty f^{i}\zeta_0$ where it is 2-1. It
gives a semi-conjugacy $\pi\circ \sigma=f\circ\pi$.
\end{prop}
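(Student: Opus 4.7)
The plan is to realize each $\pi(\omega)$ as the intersection of a nested family of ``horizontal'' and ``vertical'' cylinders whose diameters shrink exponentially, and then analyze when two codings can produce the same point. The semi-conjugacy $f\circ\pi=\pi\circ\sigma$ is immediate from the definition. Surjectivity is also quick: for any $x\in K$, every iterate $f^nx$ lies in $R$, and since points in ${\rm int}(S)$ escape $R$ forever under forward iteration, we must have $f^nx\in R\setminus{\rm int}(S)=R_0\cup R_1$ for every $n\in\mathbb Z$; choosing $\omega_n\in\{0,1\}$ with $f^nx\in R_{\omega_n}$ then exhibits $x$ as $\pi(\omega)$.

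To prove $\pi(\omega)$ is a singleton, fix $\omega$ and set
$$C_N^+(\omega)=\bigcap_{n=0}^{N}f^{-n}R_{\omega_n},\qquad C_N^-(\omega)=\bigcap_{n=1}^{N}f^{n}R_{\omega_{-n}},\qquad C_N(\omega)=C_N^+(\omega)\cap C_N^-(\omega).$$
The set $C_N^-(\omega)\cap\Theta$ is exactly the component of $\Delta_N=\Theta\cap f^NR$ indexed by $(\omega_{-1},\dots,\omega_{-N})$ in the dyadic tree described before Lemma~\ref{geo}, and that lemma confines it to a ``horizontal strip'' of stable-direction thickness $O(b^{N/2})$. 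Outside $\Theta$, Lemma~\ref{outside}(b) yields uniform hyperbolicity, so backward iteration expands stable-direction distances and the strip shrinks at rate $\sigma^{-N}$. A symmetric argument in reverse time — the inverse of $f$ is again H\'enon-like with the coordinate roles interchanged, so the entire framework of Section~2 applies to $f^{-1}$ — produces a ``vertical strip'' of unstable-direction thickness $O(b^{N/2})$ containing $C_N^+(\omega)$. Nonemptiness of $C_N(\omega)$ follows from the elementary topological fact that the limiting unstable leaf $\gamma^u\in\mathcal W^u$ determined by $(\omega_{-n})_{n\ge 1}$ (obtained via Lemma~\ref{geo} as a Hausdorff limit of components of $\Delta_N$, and guaranteed to exist by Lemma~\ref{incl}) and its stable counterpart are $C^1$-arcs stretching across $\Theta$ and must therefore meet. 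By Cantor's intersection theorem, $\pi(\omega)=\bigcap_N C_N(\omega)$ reduces to a single point. Continuity of $\pi$ is then immediate: if $\omega^{(k)}\to\omega$ in $\Sigma_2$, for each $N$ eventually $\pi(\omega^{(k)})\in C_N(\omega)$, and ${\rm diam}(C_N(\omega))\to 0$ forces $\pi(\omega^{(k)})\to\pi(\omega)$.

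Finally, the multiplicity is combinatorial. If $\pi(\omega)=\pi(\omega')=x$ with $\omega\ne\omega'$, then at every position $n$ with $\omega_n\ne\omega'_n$ we have $f^nx\in R_0\cap R_1=\{\zeta_0\}$, since the two rectangles meet only at the tangency cusp. Because $\zeta_0$ lies on a (homo- or hetero-)clinic orbit of the fixed saddles $P,Q$ and is therefore non-periodic, $f^nx=\zeta_0$ can hold for at most one $n$, forcing $\omega$ and $\omega'$ to agree at every other position. Conversely every $x$ in the orbit of $\zeta_0$ admits exactly two codings, corresponding to the free choice $\omega_n\in\{0,1\}$ at the unique time when $f^nx=\zeta_0$. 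Hence $\pi$ is $2$-to-$1$ precisely on $\bigcup_{i\in\mathbb Z}f^i\zeta_0$ and $1$-to-$1$ elsewhere. The main obstacle I foresee is establishing the exponential shrinking of $C_N^+(\omega)$ in the unstable direction: Lemma~\ref{geo} is stated only for the forward-iterate geometry of $\Delta_N$, so this step either requires a careful appeal to the $f\leftrightarrow f^{-1}$ symmetry of the H\'enon family, or a direct control of preimage widths using the forward expansion of Proposition~\ref{recovery} modulated by the bound/free decomposition of Section~\ref{bf}.
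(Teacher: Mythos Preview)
Your treatment of surjectivity, the semi-conjugacy relation, and the multiplicity analysis is correct and matches the paper. The gap is exactly where you flag it: the shrinking of $C_N^+(\omega)$ in the unstable direction. Your proposed fix via $f\leftrightarrow f^{-1}$ symmetry does not work. For $0<b\ll1$ the map $f$ contracts area by a factor $b$ while $f^{-1}$ expands it by $1/b$; comparing determinants of $Df$ and $Df^{-1}$ at a fixed point shows they cannot be smoothly conjugate unless $|b|=1$. More to the point, the entire machinery of Section~2 rests on viewing $f$ as a $\sqrt{b}$-perturbation of the one-dimensional Chebyshev map, and $f^{-1}$ admits no such description. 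Consequently there is no analogue of Lemma~\ref{geo} for the $s$-rectangles: their stable sides are pieces of $W^s(P)$, which because of the tangency can fold back in complicated ways, and there is no a~priori bound forcing $[\omega_0\cdots\omega_n]\cap V^u(\omega^-)$ to shrink.

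The paper confronts this directly and takes a genuinely different route. Rather than proving that the $s$-rectangles are thin, it shows (Sublemma~\ref{lem3}) that the stable side $\partial^s[\omega_0\cdots\omega_n]$, when it lies in $\Theta$, cannot wind around the unstable leaf $\Theta\cap V^u(\omega^-)$: its intersection with the region below that leaf is connected. The argument is not analytic but uses that $a^*$ is the \emph{first} bifurcation parameter --- if the intersection were disconnected one could perturb to a slightly larger parameter $a>a^*$ and produce a tangency there, contradicting the uniform hyperbolicity of $f_a$. This connectedness forces $\{V^u(\omega^-)\cap[\omega_0\cdots\omega_n]\}_n$ to be a nested sequence of arcs, and a growth argument via Proposition~\ref{recovery} then rules out a nondegenerate limit. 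Points whose orbit never meets $\Theta$ are handled separately by a combinatorial itinerary argument (Sublemma~\ref{lem}). Your second proposed fix, controlling preimage widths through forward expansion and the bound/free structure, is closer in spirit to what the paper does for the \emph{length} of the limiting arc, but it does not by itself prevent the stable boundary from folding --- that is precisely what Sublemma~\ref{lem3} supplies.
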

It follows that any point in $K$ is non-wandering, and thus $K\subset\Omega$.
Since $\Omega$ is bounded, $\Omega\subset K$.
Hence we obtain $K=\Omega$, and the next

\begin{cor}\label{USC}
The entropy map
$\mu\in\mathcal M(f)\mapsto h(\mu)$ is upper semi-continuous.
In particular,
there exists an equilibrium measure for any continuous potential. Moreover, 
for a dense set of continuous potentials this equilibrium measure is unique. 
\end{cor}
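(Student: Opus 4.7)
The strategy is to transport the upper semi-continuity of entropy from the full shift $(\Sigma_2,\sigma)$, where it is classical (as $\sigma$ is expansive), to $(\Omega,f)$ via the semi-conjugacy $\pi$ furnished by Proposition \ref{iota}. The pivotal structural fact is that the set on which $\pi$ fails to be injective, $\bigcup_{i\in\mathbb Z} f^i\zeta_0$, consists entirely of non-periodic orbits: indeed, $f^n\zeta_0$ tends to the saddle $Q$, so $\zeta_0$ cannot be periodic. Consequently, this set and its $\pi$-preimage in $\Sigma_2$ are null sets for every invariant measure on either side.

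The first step would be to verify that $\pi_*\colon\mathcal M(\sigma)\to\mathcal M(f)$ is a continuous, surjective, entropy-preserving map. Continuity comes from continuity of $\pi$; surjectivity follows by a standard Krylov--Bogolyubov averaging argument, lifting any $\mu\in\mathcal M(f)$ to a $\sigma$-invariant measure on $\Sigma_2$ mapping down to $\mu$. The identity $h(\tilde\mu)=h(\pi_*\tilde\mu)$ would come from the Abramov--Rokhlin formula: the conditional (fiber) entropy vanishes because $\pi$ is $\tilde\mu$-a.e.\ injective, the exceptional set of $\pi$-preimages in $\Sigma_2$ being countable and consisting of non-periodic orbits.

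With these ingredients, upper semi-continuity of $h$ on $\mathcal M(f)$ reduces to a routine compactness argument. Given $\mu_n\to\mu$ in $\mathcal M(f)$, I would pass to a subsequence realising $\limsup h(\mu_n)$, lift to $\tilde\mu_n\in\pi_*^{-1}(\mu_n)$, extract a further weak-$\ast$ convergent subsequence $\tilde\mu_n\to\tilde\mu$ by compactness of $\mathcal M(\sigma)$, note $\pi_*\tilde\mu=\mu$ by continuity of $\pi_*$, and conclude
\begin{equation*}
\limsup_n h(\mu_n)=\limsup_n h(\tilde\mu_n)\leq h(\tilde\mu)=h(\mu),
\end{equation*}
invoking upper semi-continuity of $h$ on $\mathcal M(\sigma)$ for the inequality. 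The two consequences are then immediate: for any continuous $\varphi$, $F_\varphi$ is the sum of an upper semi-continuous function and a weak-$\ast$ continuous one on the compact metrisable space $\mathcal M(f)$, so it attains its supremum; density of uniqueness follows directly from \cite[Corollary 9.15.1]{Wal82}, whose only hypothesis is the upper semi-continuity just established.

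The only genuinely non-automatic point—and the step I would verify most carefully—is the vanishing of fiber entropy in the Abramov--Rokhlin formula. I expect this to reduce cleanly to the fact that $\zeta_0$ is not periodic (since its forward iterates converge to $Q$), combined with the 2-1 structure of $\pi$ described in Proposition \ref{iota}; any technicality will be entirely measure-theoretic rather than dynamical.
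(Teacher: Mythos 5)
Your proposal is correct and follows essentially the same route as the paper: both transport upper semi-continuity of entropy from the full shift through the semi-conjugacy $\pi$ of Proposition \ref{iota}, using that the failure of injectivity is confined to the non-periodic orbit of $\zeta_0$ and hence is null for every invariant measure. The paper packages this slightly differently (it constructs an explicit measurable inverse of $\pi$ off the exceptional orbit to conclude that $\pi_*$ is a homeomorphism, rather than your Krylov--Bogolyubov lift followed by a subsequence compactness argument), but these are interchangeable technical devices realising the same idea.
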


\begin{proof}
Let $\mathcal M(\sigma)$ denote the space of $\sigma$-invariant Borel probability measures endowed with the topology of weak convergence. The push-forward
$\pi_*\colon \mathcal M(\sigma)\to\mathcal M(f)$ is a continuous map from a compact space to a Hausdorff space.
To show that $\pi_*$ is bijective, we use the following, the proof of which is left as an exercise.
\begin{claim}\label{iso}
Let $X_i$ be a topological space and $\mathcal B_i$ its Borel $\sigma$-algebra, $i=1,2$.
Let $h\colon X_1\to X_2$ be a bijective map which sends open sets
to Borel sets. Then $h^{-1}$ is measurable.
\end{claim}

%\begin{proof}
%Let $Z_2=\{A_2\subset X_2\colon \exists A_1\in\mathcal B_1\ \text{s.t.}\ hA_1=A%_2\}$. It suffices to show $Z_2\subset\mathcal B_2$.
%Let $Z_1=\{A_1\subset X_1\colon \exists A_2\in\mathcal B_2\ \text{s.t.}\ h^{-1}%A_2=A_1\}$.
%The assumptions on $h$ imply that  $Z_1$ is a $\sigma$-algebra containing any o%pen set. Hence $Z_1\supset \mathcal B_1$ holds.
%Suppose $Z_2\nsubseteq\mathcal B_2,$ namely,
%there exist $A_1\in\mathcal B_1$, $A_2\subset X_2$, $A_2\notin\mathcal B_2$ suc%h that $A_2=hA_1$. If $A_1\in Z_1$, then $A_2\in\mathcal B_2$ holds.
%Hence $A_1\notin Z_1$, which yields a contradiction to the inclusion  $Z_1\sups%et \mathcal B_1$. Hence $Z_2\subset\mathcal B_2$ holds.
%\end{proof}

Let $K_0=\pi^{-1}\left(K\setminus\bigcup_{n=-\infty}^{\infty}f^{n}\zeta_0\right)$ and $\pi_0=\pi|K_0$.
Since $\pi_0$ is bijective and sends open sets to measurable sets, by Claim~\ref{iso} it is a measurable bijection, and thus the pull-back $\pi_0^*$ is well-defined.
Since $\zeta_0$ is not a periodic point, any $\nu\in\mathcal M(f)$ gives full weight to $K_0$,
and so $\pi_0^*(\nu)\in\mathcal M(\sigma)$.
Hence $\pi_*$ is bijective. In particular $\pi_*$ is a homeomorphism, and the inverse is $\pi_0^*$.
Then the existence of equilibrium measures for any continuous potential follows directly from
the upper semi-continuity of the entropy map of $\sigma$.
The uniqueness follows from \cite[Corollary 9.15.1]{Wal82}.
\end{proof}

\begin{figure}
\begin{center}
\includegraphics[height=4cm,width=8.3cm]{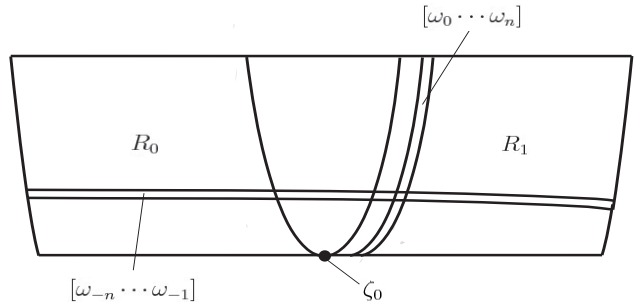}
\caption{The regions $R_0$, $R_1$ and the $s/u$-rectangles}
\end{center}
\end{figure}

\subsection{$s/u$-rectangles}\label{relevant}
By an {\it $s$-rectangle} we mean a rectangle in $R$ whose
unstable sides belong to the unstable sides of $R$. A {\it
$u$-rectangle} is a rectangle in $R$ whose stable sides belong to
the stable sides of $R$.
Let $\omega=\{\omega_n\}_{n\in\mathbb Z}\in\Sigma_2$ and
write
$\omega=\{\omega^-,\omega^+\}\in\Sigma_2$, where
$\omega^-=\{\omega_n\}_{n<0}$ and $\omega^+=\{\omega_n\}_{n\geq0}$.
For $k\leq l$, let
$$[\omega_k,\omega_{k+1},\ldots,\omega_{l}]=\bigcap_{k\leq n\leq l} f^{-n}(R_{\omega_n}).$$
 If $k\geq0$, then this set is an $s$-rectangle in $R_{\omega_0}$.
 if $l\leq-1$, then it is a  $u$-rectangle.
Set
$V^u(\omega^-)=\bigcap_{n<0}[\omega_{-n}\cdots \omega_{-1}]$
and $V^s(\omega^+)=\bigcap_{n\geq0}[\omega_0\cdots \omega_n]$.
We have $\pi(\omega)=V^u(\omega^-)\cap V^s(\omega^+)$.

%By a {\it $u$-curve} we mean the unstable sides of $\Theta$, or else a
%compact $C^1$ curve in $\Theta$ which connects the stable sides
%of $\Theta$, and intersects the boundary of $\Theta$ at no point
%other than its endpoints. %An {\it $s$-curve} is defined similarly.
%For an $u$-curve $\gamma$, 

\subsection{Proof of Proposition \ref{iota}}\label{pfprop}
Let $\omega\in\Sigma_2$. We show that
$\pi(\omega)$ is a singleton.
In the coding of the uniformly hyperbolic horseshoe, one considers
families of stable and unstable strips ($s/u$-rectangles in
our terms) and show that their boundary curves converge to curves,
intersecting each other exactly at one point. In our situation,
due to the presence of tangency, the convergence of the stable sides of
$s$-rectangles is not clear. To circumvent this
point, we take advantage of the fact that $f=f_{a^*}$ and $a^*$ is the first bifurcation parameter.

\begin{lemma}\label{single}
If $\Theta\cap\pi(\omega)\neq\emptyset$, then $\pi(\omega)$ is a singleton.
\end{lemma}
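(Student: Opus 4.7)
My plan is to prove Lemma~\ref{single} in three stages, combining Lemma~\ref{geo} with the bound/free machinery of Sect.~\ref{bf}. First, I fix $z_1\in\Theta\cap\pi(\omega)$ and let $\mathcal{Q}_n$ denote the component of $\Delta_n=\Theta\cap f^nR$ containing $z_1$. Since $[\omega_{-n},\ldots,\omega_{-1}]$ is a connected subset of $f^nR$, its intersection with $\Theta$ lies in a single component of $\Delta_n$, necessarily $\mathcal{Q}_n$. The $\mathcal{Q}_n$ are nested, and Lemma~\ref{geo} bounds the Hausdorff distance between the two unstable sides of $\mathcal{Q}_n$ by $\mathcal O(b^{n/2})$. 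Hence $\bigcap_n\mathcal{Q}_n$ is a single $C^2(b)$-curve $\gamma^u\in\Gamma^u$, and $V^u(\omega^-)\cap\Theta\subset\gamma^u$.

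Next, I would argue that any other $z_2\in\pi(\omega)$ belongs to a single connected $W^u$-arc $\tilde\gamma^u$ extending $\gamma^u$: $z_2$ lies in the same $u$-rectangles $[\omega_{-n},\ldots,\omega_{-1}]$ as $z_1$, and these rectangles contract in the near-vertical direction as $n\to\infty$---inside $\Theta$ by Lemma~\ref{geo}, outside $\Theta$ by the uniform hyperbolicity of $f$ in $R\setminus\Theta$ supplied by Lemma~\ref{outside}(b) together with the forward escape of $\mathrm{int}(S)$. This reduces the problem to showing $|\tilde\gamma^u\cap V^s(\omega^+)|=1$.

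Finally, suppose for contradiction that $z_1\neq z_2$ both lie in $\tilde\gamma^u\cap V^s(\omega^+)\subset\Omega=K$. The bound/free decomposition of Sect.~\ref{bf} applied to $z_1$, together with \eqref{delest}, yields $\|Df^n|E^u(z_1)\|\geq C\lambda^n$ for some $\lambda>1$; the bounded distortion provided by Proposition~\ref{recovery}(c) during bound periods and Lemma~\ref{outside}(a) during free periods propagates this expansion across the arc, so $\mathrm{length}(f^n\tilde\gamma^u[z_1,z_2])\to\infty$. But $f^nz_1,f^nz_2\in R_{\omega_n}\subset R_0\cup R_1$ and $R_0\cap R_1=\{\zeta_0\}$, so once the arc becomes long enough it must cross $\mathrm{int}(S)$, whose forward image escapes $R$ and never returns---a contradiction.

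The main obstacle is this last step: because $W^u$ accumulates inside $R$ with infinite total arc length, exponential stretching alone does not directly contradict boundedness of the endpoints. The essential use of $a=a^*$ being the first bifurcation---namely that $\zeta_0$ is the only critical point in $\Omega$ and $\mathrm{int}(S)$ escapes $R$ in one step---is precisely what converts length blow-up into a genuine contradiction.
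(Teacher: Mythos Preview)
Your Stage~1 is fine and matches the paper. The genuine gap is in Stage~3: you assume the bound/free machinery and the distortion estimates propagate across the \emph{entire} subarc $\tilde\gamma^u[z_1,z_2]$, but you only know the endpoints $z_1,z_2$ lie in $\pi(\omega)\subset\Omega$. Nothing you have proved forces interior points of that arc to stay in $K$; if they do not, then the bound/free structure of Sect.~\ref{bf}---defined only for points of $\Omega$---does not apply to them, Proposition~\ref{recovery} and Lemma~\ref{outside} cannot be invoked along the whole arc, and your contradiction dissolves: an interior point crossing $\mathrm{int}(S)$ and escaping to infinity is entirely compatible with $z_1,z_2\in\pi(\omega)$, since that interior point was never claimed to be in $\pi(\omega)$.

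The paper fills exactly this hole with Sublemma~\ref{lem3}, which shows that for each $n$ the stable boundary $\partial^s[\omega_0\cdots\omega_n]$, when it lies in $\Theta$, meets $D(\Theta\cap V^u(\omega^-))$ in a connected set. This forces $\Theta\cap V^u(\omega^-)\cap[\omega_0\cdots\omega_n]$ to be a single interval for every $n$, so $\Theta\cap\pi(\omega)$ is a connected subarc $\gamma$ of $\gamma^u$. Only then does the length-growth argument run: every point of $\gamma$ has itinerary $\omega$, the bound/free structure is synchronized across $\gamma$, and at a free time $f^n\gamma$ is a long $b$-horizontal curve entirely contained in $R_{\omega_n}$---impossible. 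The essential use of $a^*$ being the first bifurcation is precisely in proving Sublemma~\ref{lem3} (otherwise a tangency would occur at some $a_0>a^*$), not in converting length blow-up into escape through $\mathrm{int}(S)$ as you suggest.

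As an aside, your Stage~2 attempts more than the paper's proof of Lemma~\ref{single} actually establishes: the paper only shows $\Theta\cap\pi(\omega)$ is a singleton here and defers the full $\pi(\omega)$ to Lemma~\ref{single2} and Sublemma~\ref{lem}. Your appeal to Lemma~\ref{outside}(b) for vertical contraction of $u$-rectangles outside $\Theta$ is too vague to stand alone.
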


\begin{proof}

%Lemma \ref{geo} implies that $\Theta\cap V^u(\omega^-)$ is a
%$C^1$ limit of the unstable sides of
%$\pi[\omega_{-n}\cdots\omega_{-1}]$, and hence is an $u$-curve.
%Meanwhile,

For each $n>0$
let $\partial^s[\omega_0\cdots\omega_n]$ denote any stable side of the $s$-rectangle $[\omega_0\cdots\omega_n]$.
Since $W^s(P)$ does not
intersect itself, either
$\partial^s[\omega_0\cdots\omega_n]\subset \Theta$ or
$\subset R\setminus\Theta$. 
The next sublemma implies that
if $\partial^s[\omega_0\cdots\omega_n]\subset\Theta$, then it does not wind around $\Theta\cap V^u(\omega^-)$ (See Figure 5).
For $\gamma\in\mathcal W^u$,
let
$D(\gamma)$ denote the closed domain bordered by $\gamma$,
the unstable side of $\Theta$ containing $\zeta_0$ and the stable
sides of $\Theta$. If $\gamma$ is one of the unstable sides of
$\Theta$, then let $D(\gamma)=\Theta$.

\begin{sublemma}\label{lem3}
If $\partial^s[\omega_0\cdots\omega_n]\subset\Theta$, then
$\partial^s[\omega_0\cdots\omega_n]\cap D(\Theta\cap V^u(\omega^-))$
is connected.
\end{sublemma}
\begin{proof}
Suppose this intersection is not connected. By Lemma \ref{geo},
the unstable sides of $\Theta\cap[\omega_{-n}\cdots\omega_{-1}]$
are $C^2(b)$-curves, and
converge in $C^1$ to the curve $\Theta\cap V^u(\omega^-)\in W^u$. Hence, it is
possible to choose an integer $m>0$ and an unstable side
$\gamma$ of $\Theta\cap[\omega_{-m}\cdots\omega_{-1}]$ such that
$\partial^s[\omega_0\cdots\omega_n]\cap
D(\gamma)$ is not
connected.

Since the endpoints of $\gamma$ and 
$\partial^s[\omega_0\cdots\omega_n]$
are transverse homoclinic or heteroclinic points, and the transversality persists under small modifications 
of the parameter, for $a$ bigger than and close to $a^*$ one can consider the \emph{continuations} $\gamma(a)$, 
$\partial^s[\omega_0\cdots\omega_n](a)$ of these two curves.
For the same reason, the domain $D(\cdot)$ makes sense for $f_a$.
Since $a^*$ is the first bifurcation parameter,
$f_{a}$ for $a>a^*$ is Smale's horseshoe map. Hence,
$\partial^s[\omega_0\cdots\omega_n](a)\cap
D(\gamma(a))$ has to be
connected. By the continuous parameter
dependence of invariant manifolds, there must come a parameter $a_0>a^*$ such that $\partial^s[\omega_0\cdots\omega_n](a_0)$ meets
$\gamma(a_0)$ 
tangentially. This yields a contradiction to the fact that $a^*$ is the first bifurcation parameter.
\end{proof}

Since $\Theta\cap\pi(\omega)\neq\emptyset$, at least one of the stable sides of $[\omega_0\cdots\omega_n]$
is contained in $\Theta$, and so intersects $\Theta\cap V^u(\omega^-)$.
By Sublemma \ref{lem3}, $\{\Theta\cap V^u(\omega^-)\cap
[\omega_0\cdots\omega_n]\}_{n>0}$ is a strictly decreasing sequence of closed curves in $\Theta\cap
V^u(\omega^-)$. Hence, $\Theta\cap
V^u(\omega^-)\cap V^s(\omega^+)=\Theta\cap\pi(\omega)$ is a singleton, or else
a closed curve. We argue by contradiction to eliminate the latter alternative.

Suppose that $\gamma:=\Theta\cap\pi(\omega)$ is not a singleton. Then it is a closed curve. Since $\gamma$ is $C^1$ 
accumulated by curves in $\tilde\Gamma^u$, one can define a bound/free
structure for any point in $\gamma$.
Suppose that $x,y\in\gamma$, $n>0$ are such that $f^nx$ is bound and $f^ny\in I(\delta)$. Then $f^nx$ is near $Q$, and thus $f^{n+1}\gamma$ intersects both
$R_0$ and $R_1$. This yields a contradiction. Hence, it follows that if $x\in\gamma$, $n>0$ and $f^nx$ is bound, then $f^n\gamma\cap I(\delta)=\emptyset$.
Then one can take an arbitrarily large integer $n$ such that all points on $f^n\gamma$ are free. Proposition \ref{recovery}
yields ${\rm length}(f^n\gamma)\geq \delta(4-\varepsilon)^{\frac{n}{3}}\cdot{\rm length}(\gamma)$, and that the tangent vectors
of $\gamma$ are $b$-horizontal.  Hence, some forward iterates of $\gamma$ intersect both $R_0$ and $R_1$, a contradiction. This completes the proof of Lemma \ref{single}.
\end{proof}

\begin{figure}
\begin{center}
\includegraphics[height=3cm,width=6cm]{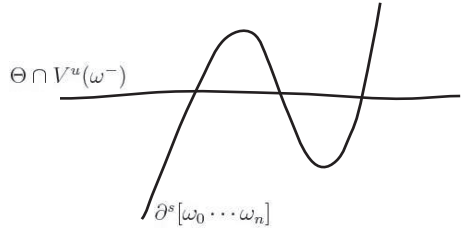}
\caption{The situation  eliminated by Sublemma \ref{lem3}  in which
 $\partial^s[\omega_0\cdots\omega_n]\cap D(\Theta\cap V^u(\omega^-))$ is not connected.}
\end{center}
\end{figure}

For $n\in\mathbb Z$, let $A_n(\omega)=\{x\in\pi(\omega)\colon
f^nx\in\Theta\}.$

\begin{lemma}\label{single2}
The following holds for all $m$, $n\in\mathbb Z$:

\begin{itemize}
\item[(a)] $A_n(\omega)$ is a singleton unless it is empty;

\item[(b)] either (i) $A_m(\omega)=A_n(\omega)$, or (ii) 
$A_m(\omega)=\emptyset$ or $A_n(\omega)=\emptyset$.
\end{itemize}
\end{lemma}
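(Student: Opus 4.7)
The plan is to reduce everything to Lemma~\ref{single}, which already asserts the strong statement that whenever the fibre $\pi(\omega)$ meets $\Theta$ at all, the \emph{entire} fibre $\pi(\omega)$ collapses to a single point. Combined with the set-theoretic shift equivariance of $\pi$, both claims become short set chases.

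First I would record the following direct consequence of the definition of $\pi$: for every $k\in\Z$ and every $\omega\in\Sigma_2$,
\[
f^k\bigl(\pi(\omega)\bigr)\subset\pi(\sigma^k\omega),
\]
because if $x\in\pi(\omega)$ then $f^{k+j}x\in R_{\omega_{k+j}}=R_{(\sigma^k\omega)_j}$ for all $j\in\Z$. This is the only general fact about $\pi$ that is needed.

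For part (a), suppose $A_n(\omega)\neq\emptyset$ and pick $x\in A_n(\omega)$. By the displayed inclusion, $f^nx\in\pi(\sigma^n\omega)$, and by assumption $f^nx\in\Theta$. Hence $\Theta\cap\pi(\sigma^n\omega)\neq\emptyset$, so Lemma~\ref{single} forces $\pi(\sigma^n\omega)$ to be a singleton, necessarily $\{f^nx\}$. Now for any $y\in A_n(\omega)\subset\pi(\omega)$ the same inclusion yields $f^ny\in\pi(\sigma^n\omega)=\{f^nx\}$, hence $f^ny=f^nx$; since $f$ is a diffeomorphism, $y=x$. Thus $A_n(\omega)=\{x\}$.

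For part (b), suppose both $A_m(\omega)$ and $A_n(\omega)$ are non-empty. By (a) write $A_m(\omega)=\{x\}$ and $A_n(\omega)=\{y\}$. The argument just given shows moreover that $\pi(\sigma^m\omega)=\{f^mx\}$. Since $y\in\pi(\omega)$, the inclusion $f^m(\pi(\omega))\subset\pi(\sigma^m\omega)$ gives $f^my\in\{f^mx\}$, so $f^my=f^mx$ and therefore $y=x$. Consequently $A_m(\omega)=A_n(\omega)$, completing the proof. The argument is essentially formal once Lemma~\ref{single} is in hand; no further dynamical input is required, and I do not anticipate a genuine obstacle beyond carefully keeping track of the shift indexing.
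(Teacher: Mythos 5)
Your proof is correct, and for part~(b) it takes a genuinely different and shorter route than the paper. Part~(a) is essentially the paper's argument: both reduce to the identity $f^nA_n(\omega)=\Theta\cap\pi(\sigma^n\omega)$ and invoke Lemma~\ref{single}. For part~(b), the paper instead introduces Sublemma~\ref{lem} (``if $x\in R\setminus\Theta$ and $y\in\Theta$ then $\pi^{-1}(x)\cap\pi^{-1}(y)=\emptyset$''), whose proof requires a four-piece decomposition $S_1,\dots,S_4$ of $R$ and a case-by-case tracking of symbolic itineraries; it then derives a contradiction by noting that when $A_m(\omega)$ and $A_n(\omega)$ are distinct nonempty singletons, $\sigma^m\omega$ lies in both $\pi^{-1}(f^mA_m(\omega))$ and $\pi^{-1}(f^mA_n(\omega))$. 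You bypass all of that: since Lemma~\ref{single} asserts that the \emph{whole} fibre $\pi(\sigma^m\omega)$ collapses once it meets $\Theta$, the equivariance $f^m\pi(\omega)\subset\pi(\sigma^m\omega)$ immediately identifies every point of $\pi(\omega)$ with the single point $x$ such that $A_m(\omega)=\{x\}$, and in particular $A_n(\omega)=\{x\}$. This observation in fact yields more than the lemma states (namely, $A_n(\omega)\neq\emptyset$ for some $n$ already forces $\pi(\omega)$ to be a singleton), and it is cleanly justified. One caveat worth flagging: your simplification does not make Sublemma~\ref{lem} redundant in the paper as a whole — it is still invoked later in the proof of Proposition~\ref{iota} to show that the ``$E$''-part and the ``$\bigcup_n A_n$''-part of $\pi(\omega)$ cannot both be nonempty — so the authors' decision to develop it inside this proof is not wasted effort, even though it is not needed for Lemma~\ref{single2}(b) itself.
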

\begin{proof}
We have $ f^nA_n(\omega)=\Theta\cap\pi(\sigma^n\omega)$. Hence
Lemma \ref{single} gives (a). To prove (b)
we need 
\begin{sublemma}\label{lem}
If $x\in R\setminus\Theta$ and $y\in\Theta$, then
$\pi^{-1}(x)\cap\pi^{-1}(y)=\emptyset$.
\end{sublemma}

We finish the proof of Lemma \ref{single2}(b) assuming Sublemma \ref{lem}.
If (i) (ii) do not hold,
then $f^mA_m(\omega)\subset\Theta$ and 
$f^mA_n(\omega)\subset R\setminus\Theta$. 
We have $\pi^{-1}(f^mA_m(\omega))=\pi^{-1}(f^mA_n(\omega))$, while
Sublemma \ref{lem} gives $\pi^{-1}(f^mA_m(\omega))\cap\pi^{-1}(f^mA_n(\omega))=\emptyset.$
This yields a contradiction.

It is left to prove Sublemma \ref{lem}.
For $x\in K$ and
$n\in\mathbb Z$, define $\omega_n(x) \in\{0,1\}$ by
 $f^nx\in R_{\omega_n(x)}$. In the case $f^nx=\zeta_0$ we let $\omega_n(x)=0$ or $1$.
It suffices to claim that if $x\in R\setminus\Theta$ and $y\in\Theta$, then
there exists
$n\geq0$ such that $\omega_n(x)\neq \omega_n(y)$.
To see this, define rectangles $S_1,S_2,S_3,S_4$ as follows:
$S_1$ (resp. $S_4$) is the component of $R\setminus{\rm Int} \Theta$ at the left (resp. right) of $\zeta_0$; $S_2=R_0\setminus {\rm int}S_1$ and $S_3=R_1\setminus {\rm int}S_4$ (See Figure 6).
Observe that: $fS_1\subset S_1\cup S_2\cup S_3$; $fS_2\subset S_4$; $fS_3\subset S_4$; $fS_4\subset S_1\cup S_2\cup S_3$.
Either: (i) $x\in S_1$, $y\in S_2$; (ii) $x\in S_1$, $y\in S_3$; (iii) $x\in S_4$, $y\in S_2$; (iv) $x\in S_4$, $y\in S_3$.
In cases (ii) and (iii) we have $\omega_0(x)\neq \omega_0(y)$, and so the claim holds with $n=0$.
In case (i) either $\omega_0(x)\omega_1(x)=00$, $\omega_0(y)\omega_1(y)=01$ and the claim holds with $n=1$, or else $fx\in S_3$,
$fy\in S_4$ which is reduced case (iv).

We now consider case (iv). Then either $\omega_0(x)\omega_1(x)=10$, $\omega_0(y)\omega_1(y)=11$
and the claim holds with $n=2$, or else $fx\in S_3$, $fy\in S_4$
and $\omega_0(x)\omega_1(x)=11=\omega_0(y)\omega_1(y).$
If $fx\in I(\delta)$ then $\omega_0(x)\omega_1(x)\omega_2(x)\omega_3(x)=1110$,
$\omega_0(y)\omega_1(y)\omega_2(y)\omega_3(y)\in\{1100, 1101,1111\}$. 
Hence the claim holds with $n=2$ or $3$. 

Let us now assume that $fx\notin I(\delta)$. 
Let $z$ denote the point of intersection between $V^s(\omega^+)$ and the unstable leaf containing $x$.
Let $L$ denote the segment connecting $z$ and $x$.
Lemma~\ref{outside} implies that the lengths of the forward images 
of $L$ grow exponentially as long as 
the images does not meet $I(\delta)$. Let $k>1$ be the smallest positive integer such that $I(\delta)\cap f^kL\neq\emptyset$.
Since $f\alpha_1^+\subset \alpha_1^+$ and $L$ intersects $\alpha_1^+$,
$\omega_i(x)=1=\omega_i(y)$ for $0\leq i\leq k-1$.
If $f^{k}x\in I(\delta)$ then $f^{k}y\in S_4$, and so 
$\omega_{k}(y)\omega_{k+1}(y)\omega_{k+2}(y)\in\{100,101,111\}.$
As for $x$,
$\omega_{k}(x)=0$, or else  $\omega_{k}(x)\omega_{k+1}(x)\omega_{k+1}(x)=110$.
Hence the claim holds with
$n=k,k+1$ or $k+2$.
The same reasoning holds for the case $f^{k}y\in I(\delta)$.
\end{proof}

We are in position to complete the proof of Proposition \ref{iota}.
Let  $E=\{x\in K\colon f^nz\notin\Theta\ \forall n\in\mathbb Z\}.$
%It is easy to see that the set of points in $R$ whose
%forward orbits do not intersect $\Theta$ coincides with the stable
%sides of $R$. Due to the symmetric reason, the set of points in
%$R$ whose backward orbits do not intersect $\Theta$ coincides with
%the unstable sides of $R$.
We have
\begin{equation}\label{pio}
\pi(\omega)=\{x\in E\colon \pi^{-1}(x)=\omega\}\cup\bigcup_{n\in\mathbb Z} A_n(\omega).
\end{equation}

It is easy to see that $E$ is contained in the stable sides of $R$.
In addition, Lemma \ref{geo} implies that
if $x,y\in K$ belong to the same stable side of $R$, then $\pi^{-1}(x)\neq\pi^{-1}(y)$. Hence the first set in (\ref{pio})
is a singleton unless it is empty. By Lemma \ref{single2}, the second set in (\ref{pio}) is a singleton unless
it is empty. Either the first or the second set is empty, for otherwise
Sublemma \ref{lem} yields a contradiction. Consequently, $\pi(\omega)$ is a sigleton. 

Since $R_0\cap
R_1=\{\zeta_0\}$, $\pi$ is 1-1 except on
$\bigcup_{i=-\infty}^\infty f^i\zeta_0$ where it is 2-1. Observe
that, since $\sigma$ sends cylinder sets to cylinder sets, the
continuity of $\pi$ at a point $\omega$ implies the continuity of $\pi$ at
$\sigma^n\omega$, $n\in\mathbb Z$. The continuity of $\pi$ on $\pi^{-1}\Theta$ follows from the proof of Lemma
\ref{single}. By the above observation, $\pi$ is continuous on $\Sigma_2\setminus\pi^{-1}E$. The continuity on $\pi^{-1}E$ is obvious. Since $K\subset R_0\cup R_1$, $\pi$ is surjective.
\qed

\section{Proof of the theorem}
In this section we finish the proof of the theorem. In Sect.\ref{unstable} we study the regularity of the unstable direction $E^u$ defined in (\ref{eu}).
In Sect.\ref{limpts} we estimate the amount of drop of unstable Lyapunov exponents 
in the weak convergence of measures.  
In Sect.\ref{proff} we prove the theorem.

\subsection{Regularity of the unstable direction}\label{unstable}
We first show that $E^u$ is Borel measurable. For two positive integers $i,j$, $j>1$, let $\Omega_{i,j}$ denote the set of
all $z\in \Omega$ for which there exists $v\in T_z\mathbb R^2\setminus\{0\}$ such
that $\|Df^{-n}(z)v\| \leq ij^{-n}\|v\|$ holds for every $n\geq0.$
Clearly, $\Omega_{i,j}$ is a closed set.
Observe that $E^u(z)$ makes sense if and only if there exist $i,j$ such that
$z\in \Omega_{i,j}$. Since $E^u$ is continuous on
 $\Omega_{i,j}$, it is Borel measurable on $\bigcup_{i,j} \Omega_{i,j}$.

Due to the
presence of the tangency, $E^u$ is not continuous at $Q$.
We show that $E^u$ makes sense, and is continuous
on a large subset \footnote{We do not make any claim on the continuity of $E^u$ on $\partial^sR\setminus\{Q\}$.
This does not matter because $f$-invariant probability measures do not charge this set.}
of $\Omega$.
Let $\partial^sR$ denote the union of the stable sides of $R$ and let
$\Omega'=\Omega\setminus\partial^sR$.

\begin{prop}\label{measurable}
$E^u$ is well-defined on $\Omega$, and is continuous on $\Omega'$.
\end{prop}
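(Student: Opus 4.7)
The plan is to construct $E^u(z)$ for every $z\in\Omega$ using the unstable leaves of Sect.~\ref{uleaf}, verify \eqref{eu} via the bound/free expansion estimates of Sect.~\ref{bf}, and then exploit the $C^1$-regularity of the family $\tilde\Gamma^u$ to propagate continuity on $\Omega'$. Borel measurability on the set where $E^u$ makes sense has already been handled in the opening paragraphs of this subsection, so only well-definedness on $\Omega$ and continuity on $\Omega'$ remain to be addressed.

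For $z\in\Theta\cap\Omega$, Lemma \ref{incl} places $z$ on some unstable leaf $\gamma^u\in\Gamma^u$, and I would set $E^u(z):=T_z\gamma^u$. Any two unstable leaves sharing $z$ must share their tangent there: distinct tangents would, after $C^1$-approximation, force two disjoint members of $\tilde\Gamma^u$ (a family of pairwise disjoint pieces of $W^u$) to cross transversally. To verify \eqref{eu}, I would iterate the curve backward: $f^{-n}\gamma^u$ is a $C^1$-limit of $f^{-n}\tilde\Gamma^u$-curves in $W^u$, and the bound/free decomposition of Sect.~\ref{bf} applied to the forward orbit from $f^{-n}z$ to $z$, together with Lemma \ref{outside}(a) and Proposition \ref{recovery}(c),(e),(f), yields $\|Df^n|T_{f^{-n}z}(f^{-n}\gamma^u)\|\geq C\mu^n$ for some $\mu>1$; the identity $\|Df^{-n}|T_z\gamma^u\|=\|Df^n|T_{f^{-n}z}(f^{-n}\gamma^u)\|^{-1}$ finishes \eqref{eu}. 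For $z\in\Omega\setminus\Theta$ I would find $N\geq 0$ with $f^Nz\in\mathrm{int}\,\Theta$ (see the obstacle below) and define $E^u(z):=Df^{-N}(f^Nz)\cdot T_{f^Nz}\gamma^u$, with \eqref{eu} transferred through the smooth $Df^{-N}$.

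For continuity on $\Omega'$, given $z_m\to z$ with $z_m,z\in\Omega'$, pick $N$ so that $f^Nz\in\mathrm{int}\,\Theta$. By continuity of $f^N$ and openness of $\mathrm{int}\,\Theta$, $f^Nz_m\in\Theta$ for all large $m$. Lemma \ref{incl} places $f^Nz$ and $f^Nz_m$ on unstable leaves $\gamma^u$ and $\gamma^u_m$; because $\tilde\Gamma^u$ consists of pairwise disjoint $C^2(b)$-curves with uniformly bounded slope and curvature, Arzel\`a-Ascoli combined with disjointness gives $\gamma^u_m\to\gamma^u$ in $C^1$, so $T_{f^Nz_m}\gamma^u_m\to T_{f^Nz}\gamma^u$. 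Pushing back via the smooth map $Df^{-N}$ then yields $E^u(z_m)\to E^u(z)$.

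The main obstacle is the structural claim invoked twice above: every $z\in\Omega'$ admits $N\geq 0$ with $f^Nz\in\mathrm{int}\,\Theta$. A forward orbit of $z\in\Omega$ entirely avoiding $\Theta$ must sit in one of the two ``ear'' components of $R\setminus\Theta$ adjacent to $\partial^sR$; combining the uniform expansion of horizontal vectors outside $\Theta$ (Lemma \ref{outside}(b)) with the action $f\alpha_0^+\subset\alpha_0^-$ on the stable sides forces such an orbit to accumulate on $\partial^sR$. Closedness of $\Omega$ and the semi-conjugacy of Proposition \ref{iota} then identify $z$ with a point of $\partial^sR$, contradicting $z\in\Omega'$. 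Making this dichotomy precise---and ruling out the marginal case where $f^Nz$ keeps landing on $\partial\Theta$---is what I expect to consume most of the technical effort.
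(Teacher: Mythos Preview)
Your strategy mirrors the paper's: define $E^u$ on $\Theta\cap\Omega$ via the tangent to unstable leaves, verify \eqref{eu} through a bound/free expansion estimate (the paper packages this as Lemma~\ref{contract}), propagate by $Df$-invariance, and pull back continuity from $\Theta$. The paper also handles the marginal case $f^nz\in\partial\Theta$ exactly as you suggest, by slightly enlarging $\Theta$.

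There is, however, a genuine gap in the well-definedness part. The proposition asserts that $E^u$ is defined on \emph{all} of $\Omega$, including $\partial^sR\cap\Omega$; but your construction only covers points whose forward orbit enters $\Theta$, and no forward iterate of a point on $\partial^sR$ ever does (since $f(\partial^sR)\subset\partial^sR$ and $\partial^sR\cap\Theta=\emptyset$). You acknowledge this implicitly by restricting the ``obstacle'' discussion to $\Omega'$, but never return to treat $\partial^sR$. The paper closes this gap with a one-line observation: any $z\in\Omega$ with $f^nz\notin\Theta$ for all $n\in\mathbb Z$ lies in $\partial^sR$, and since its full orbit avoids $I(\delta)$, the dynamics there is uniformly hyperbolic and a standard cone-field argument produces $E^u(z)$. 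You should add this separate case.

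A smaller issue: your sketch of the obstacle argument is not quite right. Showing that a forward orbit confined to $R\setminus\Theta$ \emph{accumulates} on $\partial^sR$ does not, by itself, place $z$ on $\partial^sR$; invoking ``closedness of $\Omega$'' or the semi-conjugacy does not bridge this. The clean argument is that such an orbit avoids $I(\delta)$ and hence lives in the uniformly hyperbolic region near $Q$; by local stable-manifold theory for the saddle $Q$, the set of points whose forward orbit remains in this neighborhood is exactly $W^s_{\rm loc}(Q)\cap R=\partial^sR$. (The paper itself asserts this fact without elaboration, relying on the set $E$ introduced in the proof of Proposition~\ref{iota}.)
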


\begin{figure}
\begin{center}
\includegraphics[height=4cm,width=8.3cm]{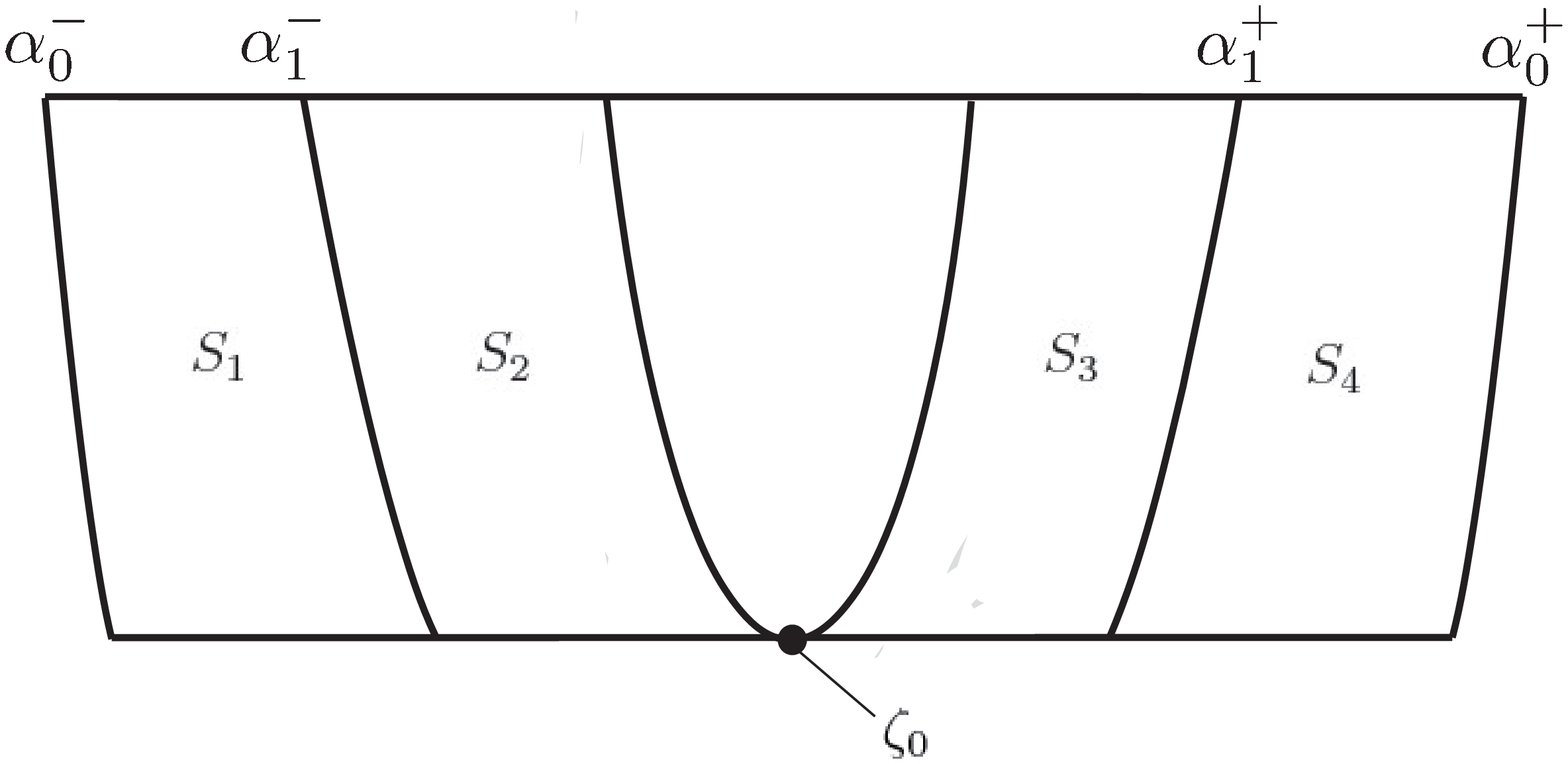}
\caption{The region $S_1$, $S_2$, $S_3$, $S_4$}
\end{center}
\end{figure}

\begin{proof}
We first prove that $E^u$ makes sense on $\mathcal W^u$,
and is spanned by the tangent directions of the unstable leaves in $\Gamma^u$.
Since any unstable leaf is a $C^1$ limit of a sequence of
curves in $\tilde\Gamma^u$, these statements follow from
the next uniform backward contraction on curves in $\tilde\Gamma^u$.
\begin{lemma}\label{contract}
There exists $C>0$ such that for any $\gamma\in \tilde\Gamma^u$, $z\in\gamma$ and $n>0$,
$\|Df^n|E^u(f^{-n}z)\|\geq  C(4-\varepsilon)^{\frac{n}{2}}.$ %\frac{1}{10}\lambda_1^{\frac{n}{2}}.$
\end{lemma}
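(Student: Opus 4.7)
The plan is to decompose the $n$ steps of forward iteration from $f^{-n}z$ to $z$ into (a) an initial near-saddle phase on which the dynamics is essentially given by the linearization at $Q$, and (b) a terminal phase of uniformly bounded length. Since $\gamma\in\tilde\Gamma^u$ lies in $W^u=W^u(Q)$, the backward orbit $f^{-k}z$ contracts to $Q$ at rate $\lambda_1^{-1}$, so the near-saddle phase is long whenever $n$ is. The tangent to $W^u$ at any $f^{-k}z$ is precisely the $b$-horizontal unstable direction $E^u(f^{-k}z)$, and the quantity under study is simply the forward expansion of that $b$-horizontal vector.

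More precisely, I would fix a neighborhood $W^u_{\mathrm{loc}}(Q)=\{w\in W^u:|w-Q|\leq\rho_0\}$ of $Q$ with $\rho_0$ so small that, by continuity of $Df$ and the fact that the unstable eigenvalue at $Q$ equals $\lambda_1=4-\varepsilon/2>\sqrt{4-\varepsilon}$, every $b$-horizontal vector is expanded by at least $\sqrt{4-\varepsilon}$ per iteration at any point of $W^u_{\mathrm{loc}}(Q)$. Since $\gamma\subset\Theta$ is compact and $W^u(Q)\cap\Theta$ is bounded, there is $K_0\in\mathbb{N}$ (depending only on $\rho_0$, $\Theta$, and $\lambda_1$) such that for every $z\in\gamma$ and every $n\geq K_0$, the points $f^{-k}z$ for $k\geq K_0$ all lie in $W^u_{\mathrm{loc}}(Q)$. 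Applying the per-step estimate to the first $n-K_0$ forward iterations of $f^{-n}z$ yields
\[
\|Df^{n-K_0}|E^u(f^{-n}z)\|\ \geq\ (4-\varepsilon)^{(n-K_0)/2}.
\]

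The remaining $K_0$ forward iterations (from $f^{-K_0}z$ to $z$) are handled uniformly. Using the bound/free decomposition of Section \ref{bf} applied to this segment --- noting that Lemma \ref{tangent} applies at any return to $I(\delta)$ since each point of the orbit lies on $W^u$ --- the estimates \eqref{delest} give expansion $\geq (4-\varepsilon)^{p/2}$ on each bound period, while Lemma \ref{outside}(a), combined with Proposition \ref{recovery}(f) for $b$-horizontality, gives expansion $\geq\sigma^f$ on each free piece. Since the total length is at most $K_0$ and Proposition \ref{recovery}(a) forces any bound period occurring within $K_0$ steps to satisfy $|z_{n_i}-\zeta_i|\geq e^{-C K_0}$, the overall expansion over the final $K_0$ iterations is bounded below by a positive constant $C_0=C_0(\varepsilon,\delta,b,K_0)$. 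Multiplying gives $\|Df^n|E^u(f^{-n}z)\|\geq C(4-\varepsilon)^{n/2}$ for $n\geq K_0$, and the case $n<K_0$ follows by a direct compactness argument on the set $\{(z,k):z\in\gamma,\ 1\leq k\leq K_0\}$. The main obstacle is ensuring the pointwise derivative estimate on $W^u_{\mathrm{loc}}(Q)$ is uniform in position rather than only valid at $Q$ itself; this requires choosing $\rho_0$ sufficiently small relative to the positive gap $\lambda_1-\sqrt{4-\varepsilon}$, which is straightforward but essential --- without it the near-saddle rate would degrade to $\sigma<\sqrt{4-\varepsilon}$ and the argument would collapse.
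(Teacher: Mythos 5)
Your proposal has a genuine gap at the very first structural step. You assert that there is a constant $K_0$ \emph{depending only on $\rho_0$, $\Theta$, $\lambda_1$} such that for every $z\in\gamma$ and every $k\geq K_0$ one has $f^{-k}z\in W^u_{\mathrm{loc}}(Q)$. This confuses the exponential \emph{rate} of backward contraction (which is indeed roughly $\lambda_1^{-1}$ once one is already near $Q$) with the \emph{hitting time} to the local unstable manifold, which is not uniformly bounded over $\tilde\Gamma^u$. The curves in $\tilde\Gamma^u$ are pieces of $W^u$ of arbitrarily high generation: a curve $\gamma\subset f^N W^u_{\mathrm{loc}}$ requires on the order of $N$ backward iterates before its points land in $W^u_{\mathrm{loc}}$, and the intermediate backward orbit can spend arbitrarily long near the other saddle $P$ or returning repeatedly to $I(\delta)$. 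Since $N$ is unbounded, your $K_0$ cannot be uniform, and consequently the constant $C$ you produce would depend on $\gamma$ (and diverge as the generation grows), whereas the lemma requires a single $C$ valid for all $\gamma\in\tilde\Gamma^u$. The ``compactness argument'' you invoke for $n<K_0$ has the same defect: it treats $\gamma$ as fixed. A secondary issue is that you set $W^u=W^u(Q)$, which is the convention only in the orientation-preserving case; in the orientation-reversing case $W^u=W^u(P)$ and your near-$Q$ normal form is the wrong one.

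The paper avoids this trap entirely by not trying to isolate a ``near-saddle phase'' of predetermined length. Instead it extends to a bound/free structure for the whole forward orbit of $f^{-M}z$ (with $M$ chosen ad hoc, large enough that $f^{-M}z$ lies in $W^u_{\mathrm{loc}}$, but never requiring $M-n$ to be small), and then directly estimates the expansion over the \emph{entire} segment $f^{-n}z,\dots,z$ by concatenating the bound-period estimate $(4-\varepsilon)^{p_i/2}$ from Proposition~\ref{recovery}(e) with the free-period estimate from Lemma~\ref{outside}, as recorded in \eqref{delest}. The delicate part of the paper's argument, which your proposal does not address, is that $f^{-n}z$ need not be in a free state: it may land in the middle of a bound period or even a fold period. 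The paper handles these cases separately, using Proposition~\ref{recovery}(d) (that iterates inside a fold period have norm $\leq 1$, so dividing by them can only help) and Proposition~\ref{recovery}(c) together with \eqref{misiurewicz} (to compare the partial expansion $\|w_p\|/\|w_{i-n}\|$ with $\lambda_1^{p-i+n}$). None of this machinery appears in your sketch, and it is precisely what makes the estimate uniform in $\gamma$.
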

\begin{proof}
Take a large integer $M\geq n$ so that $f^{-M}z$ is contained in the local unstable manifold of the saddle.
We introduce a bound/free structure for the forward orbit of $f^{-M}z$.
Observe that $z\in\Theta$ must be free, as
the forward orbit of a critical point never returns close to $\Theta$.

We first consider the case where
$f^{-n}z$ is free. Splitting the orbit $f^{-n}z,f^{-n+1}z\ldots,z$
into bound and free segments, and then applying the derivative estimates in 
(\ref{delest}) we get the desired inequality.

We now consider the case where
$f^{-n}z$ is bound. Let $i$ denote the smallest $j>n$ such that
$f^{-j}z\in I(\delta)$.
Let $p$, $q$ denote the corresponding bound and fold periods. We have $-n<-i+p$. There are two cases,
$-n$ being either inside or outside of the fold period.
If $-n<-i+q$, then
\[\|Df^n|E^u(f^{-n}z)\|=\frac{\|Df^i|E^u(f^{-i}z)\|}{\|Df^{i-n}|E^u(f^{-i}z)\|}
\geq \|Df^i|E^u(f^{-i}z)\|\geq (4-\varepsilon)^{\frac{i}{2}}>(4-\varepsilon)^{\frac{n}{2}}.\]
For the first inequality we have used Proposition \ref{recovery} (d).
If $-n\geq-i+q$, then by Proposition \ref{recovery} (c) and \eqref{misiurewicz} for some $C\in(0,1)$ we have
\[\frac{\|Df^{p}|E^u(f^{-i}z)\|}
{\|Df^{i-n}|E^u(f^{-i}z)\|}\geq C\frac{\|w_p(\zeta)\|}
{\|w_{i-n}(\zeta)\|} \geq C\lambda_1^{p-i+n}.\]
Since both
$f^{p-i}z$ and $f^{-i}z$ are free,
Proposition \ref{recovery} (e) and Lemma \ref{outside} yield
\[\|Df^i|E^u(f^{-i}z)\|\geq (4-\varepsilon)^{\frac{i-p}{2}}\|Df^{p}|E^u(f^{-i}z)\|.\] Multiplying these two
inequalities and then using $p-i+n>0$, $\lambda_1>(4-\varepsilon)^{\frac{1}{2}}>1$ we obtain 
\[\|Df^n|E^u(f^{-n}z)\|\geq C\lambda_1^{p-i+n}(4-\varepsilon)^{\frac{i-p}{2}}
\geq C(4-\varepsilon)^{\frac{n}{2}}.\qedhere\]
\end{proof}
Lemma \ref{contract} shows that $E^u$ is well-defined on $\mathcal W^u$. By Lemma \ref{incl}, $E^u$ is defined everywhere on
$\Theta\cap \Omega$. Due to the $Df$-invariance,
$E^u$ is well-defined everywhere on $\bigcup_{n=-\infty}^{+\infty}
f^n(\Theta\cap \Omega)$.
If $z\in\Omega$ is not contained in 
$\bigcup_{n=-\infty}^{+\infty} f^n(\Theta\cap \Omega)$, then $z\in\partial^sR$.
Since the dynamics outside of $\Theta$ is uniformly hyperbolic, the standard cone field argument
shows that $E^u(z)$ is well-defined.
Therefore, $E^u$ is well-defined on $\Omega$.

%We now prove Sublemma \ref{incl}.
%Let $z\in\Theta\cap K$.
%Then there exists an arbitrarily large integer $n$ such that
%$f^{-n}z\notin\Delta$. We introduce a bound/free structure for the orbit
%$f^{-n}z,\ldots,f^{-1}z,z$ with $v=\left(\begin{smallmatrix}1\\0\end{smallmatri%x}\right)\in T_z\mathbb R^2$. We have $\|Df^nv\|\geq 10^{-20}\|Df^iv\|$ for $0\%leq i<n$.
%By \cite{WanYou01} or Pliss's Lemma \cite{}, it is possible to choose an intege%r $n/2\leq m\leq n$ such that
%$f^{-m}z\notin\Delta$ and $\|Df^{i}(f^{-m}z)\|\geq b^{\frac{i}{4}}$ for every $%0<i\leq m$. By a result of \cite{MorVia93},
%there exists a curve $\Gamma$ through $f^{-m}z$ with the following properties:
%(i) for any $x,y\in \Gamma$, $|f^mx-f^my|\leq (Cb)^{\frac{m}{2}}$;
%(ii) ${\rm length}(\Gamma)\geq 10^{-10}.$
%In particular,
%$\Gamma$ intersects $W^u$. It follows that the ball of radius $(Cb)^{\frac{m}{2%}}$
%intersects a small segment in $W^u$. By Lemma \ref{c2}, there exists a curve in% $\tilde\Gamma^u$ which contains this small segment. Since $m$ can be made arbi%trarily large by taking large $n$, $z$ is accumulated by curves in
%$\tilde\Gamma$. Hence $z$ is contained in an unstable leaf.

 %Then for an arbitrary open set $U$ in the projective space,
%$\psi^{-1}(U)\cap\Lambda_{i,j}$ is a relatively open set in
%$\Lambda_{i,j}$, and thus is a Borel set. Hence $\psi^{-1}(U)=
%\bigcup_{i,j}\psi^{-1}(U)\cap\Lambda_{i,j}$ is a Borel set.

Lemma \ref{geo} implies
that $E^u$ is uniformly continuous on $\Theta\cap \mathcal W^u$, 
and thus it
is continuous on $\mathcal W^u$.
Let $z\in \Omega'$. Then there exists $n\geq 0$ such that
$f^nz\in \Theta$.
We first consider the case where $f^nz$ is not in the stable sides of $\Theta$.
Then $E^u$ is continuous at $f^nz$, and so
the $Df$-invariance of $E^u$ and the
continuity of $Df$ together imply that $E^u$ is continuous at $z$ as well.

In the case where $f^nz$ is in the stable sides of $\Theta$, the above argument is slightly incomplete, because the continuity of $E^u$ in a neighborhood of $f^nz$ is not proved yet. However, we can prove this by slightly extending the region $\Theta$ and repeating the same arguments.
\end{proof}

\subsection{Unstable Lyapunov exponents of limit points}\label{limpts}

%By Lemma \ref{Lyap}, it suffices to consider the case $\mu\neq\delta_Q$.
%Write
%$\mu_n=u_n\delta_Q+v_n\nu_n$, $u_n,v_n\geq0$ and $\nu_n(\{Q\})=0$.
%Passing to proper subsequences if necessary, we may assume that
%$(u_n)$ and $(v_n)$ converge.
%Let $u,v$ denote the corresponding limits. Since $\mu\neq\delta_Q$, $v>0$
%holds. Then $\mu=u\delta_Q+v\nu$, where $\nu$ is the limit point of $(\nu_n)$. %If
%$\nu(\{Q\})>0$, then $\mu=\delta_Q$. Hence $\nu(\{Q\})=0$.
%Then Lemma \ref{lyap} gives
%$\lambda_1^u(\nu_n)\to\lambda^u(\nu)$. Hence $\lambda_1^u(\mu_n)=u_n\lambda^u(
%\delta_Q)+v_n\lambda^u(\nu_n)\to u\lambda^u(\delta_Q)+v\lambda^u(\nu)=\lambda_1%^u(\mu).$\medskip

A main result in this subsection is as follows.
Let $\mathcal M^e(f)$ denote the set of all ergodic $f$-invariant Borel 
probability measures and let $\delta_Q$ denote the Dirac measure at $Q$.
\begin{prop}\label{lyap2}
If $\{\mu_n\}_n\subset\mathcal M^e(f)$,
$\mu_n\to\mu$,
$\mu= u\delta_Q+(1-u)\nu$, $0\leq u\leq1$, $\nu\in\mathcal M(f)$ and $\nu\{Q\}=0$, then:
$$ \frac{u}{2}\log(4-\varepsilon)+(1-u)\lambda^u(\nu)\leq \varliminf_{n\to\infty}\lambda^u(\mu_n);$$
$$\varlimsup_{n\to\infty}\lambda^u(\mu_n)\leq u\lambda^u(\delta_Q)+(1-u)\lambda^u(\nu).$$
\end{prop}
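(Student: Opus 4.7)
My plan is to decompose $\lambda^u(\mu_n)=\int\log J^u\,d\mu_n$ into contributions from a small neighborhood $V_r$ of $Q$ and its complement, treating each piece with weak convergence. Fix $r>0$ with $\mu(\partial V_r)=0$, which holds for all but countably many $r$. By Proposition \ref{measurable}, $\log J^u$ is continuous on $\Omega\setminus\partial^sR$, and the footnote to that proposition records that invariant probability measures charge no point of $\partial^sR\setminus\{Q\}$; combined with $\nu\{Q\}=0$ and $Q\notin V_r^c$ for small $r$, this shows $\log J^u$ is $\mu$-almost everywhere continuous on $V_r^c$. The portmanteau theorem then gives $\int_{V_r^c}\log J^u\,d\mu_n\to(1-u)\int_{V_r^c}\log J^u\,d\nu$, and since $\log J^u$ is bounded, the right-hand side converges to $(1-u)\lambda^u(\nu)$ as $r\to 0$.

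The heart of the proof is establishing, for all sufficiently small $r$ and every $z\in\Omega\cap V_r$, the pointwise sandwich
\[\tfrac12\log(4-\varepsilon)\le\log J^u(z)\le\lambda^u(\delta_Q)+\eta(r),\qquad\eta(r)\to 0\text{ as }r\to 0.\]
The key claim behind this is that $E^u(z)\to E^u(Q)$ as $z\to Q$ within $\Omega$. Any accumulation direction $v$ of $E^u(z_k)$ must satisfy the backward-contraction condition defining $E^u$, and a cone-field analysis near the hyperbolic saddle $Q$ rules out anything other than the unstable eigendirection $E^u(Q)$. This gives $J^u(z_k)\to J^u(Q)$, hence the upper estimate. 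The lower estimate then follows because $J^u(Q)\ge\lambda_1=4-\varepsilon/2>\sqrt{4-\varepsilon}$, so $J^u(z)\ge\sqrt{4-\varepsilon}$ throughout $\Omega\cap V_r$ for $r$ small enough.

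With these pointwise bounds, portmanteau gives $\mu_n(V_r)\to\mu(V_r)=u+(1-u)\nu(V_r)$, and integration over $V_r$ yields
\[\tfrac12\log(4-\varepsilon)\,\mu_n(V_r)\le\int_{V_r}\log J^u\,d\mu_n\le(\lambda^u(\delta_Q)+\eta(r))\mu_n(V_r).\]
Adding the $V_r^c$ estimate, taking the appropriate $\varlimsup$ and $\varliminf$ in $n$, and finally letting $r\to 0$ (so $\nu(V_r)\to 0$ and $\eta(r)\to 0$) produces both inequalities of the proposition. Note that ergodicity of the $\mu_n$ is not actually used in this outline.

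The main obstacle is justifying the claim $E^u(z)\to E^u(Q)$ for $z\in\Omega$ approaching $Q$. Although intuitively clear from the hyperbolicity of $Q$, the argument must handle points of $\Omega$ whose backward orbits leave a neighborhood of $Q$ before the local linearization takes effect. I would address this by combining a cone-field analysis at $Q$ with the global structural information from Lemma \ref{incl}: points of $\Theta\cap\Omega$ lie on unstable leaves in $\mathcal W^u$ whose tangent directions are $b$-horizontal, so $E^u(z)$ is close to horizontal for $z\in\Omega\cap V_r$, matching $E^u(Q)$.
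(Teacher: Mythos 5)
Your proposal rests on the claim that $E^u(z)\to E^u(Q)$ as $z\to Q$ within $\Omega$, and hence that $\log J^u(z)\ge\tfrac12\log(4-\varepsilon)$ pointwise on $\Omega\cap V_r$ for small $r$. This claim is false, and the paper explicitly flags the obstruction: Section~\ref{unstable} states that \emph{due to the presence of the tangency, $E^u$ is not continuous at $Q$}, and Proposition~\ref{measurable} asserts continuity of $E^u$ only on $\Omega'=\Omega\setminus\partial^sR$, excluding $Q$. A concrete counterexample is the forward orbit of the tangency point $\zeta_0$. One has $\zeta_0\in W^u\cap W^s(Q)$, and $T_{\zeta_0}W^u=T_{\zeta_0}W^s(Q)$, so $E^u(\zeta_0)$ is tangent to $W^s(Q)$. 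Then $f^n\zeta_0\to Q$, while $E^u(f^n\zeta_0)=Df^nE^u(\zeta_0)$ stays tangent to $W^s(Q)$ and therefore converges to the \emph{stable} eigendirection $E^s(Q)$, not $E^u(Q)$. Consequently $\log J^u(f^n\zeta_0)$ converges to the logarithm of the contraction rate at $Q$, which is negative, so there are points of $\Omega$ arbitrarily close to $Q$ on which $\log J^u$ is far below $\tfrac12\log(4-\varepsilon)$. Your cone-field/$b$-horizontal argument fails precisely here: a point that has just passed the fold near $\zeta_0$ enters a neighborhood of $Q$ with a nearly vertical unstable direction, and it takes a positive fraction of the subsequent sojourn near $Q$ for that direction to be straightened out.

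The paper's proof avoids this by never asking for a pointwise bound; instead Lemma~\ref{Vk} bounds the \emph{Birkhoff average} $\frac1m\sum_{i=0}^{m-1}\log J^u(f^iz)$ over a maximal excursion of length $m$ inside the carefully shaped neighborhood $V_{k,M}$. The shape of $V_{k,M}$ forces any orbit entering it to have just come from $I(\delta)$ (i.e.\ $f^{-2}z\in I(\delta)$), so that the binding argument of Proposition~\ref{recovery} and the distortion estimates control the net derivative growth over the whole excursion: the initial drop caused by the fold is compensated, on average, by the long stay near $Q$. Translating this average bound into the statement about $\mu_n(\rho_{0,k}\log J^u)$ requires decomposing the generic orbit into consecutive excursions, which is exactly why ergodicity of the $\mu_n$ is used (Birkhoff's theorem applied to a $\mu_n$-generic point $\xi_n$). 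The fact that your outline "does not actually use ergodicity" is a symptom that the essential dynamical mechanism — controlling averages rather than pointwise values — has been bypassed, not that it can be avoided. Your treatment of the $V_r^c$ part (portmanteau plus a.e.\ continuity of $\log J^u$, letting $r\to0$) is in the spirit of the paper's Case~I ($u=0$, Lemma~\ref{lyap}) and is fine; the gap is entirely in the $V_r$ part.
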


\begin{proof}

We first introduce a family of delimiting curves which allow us to relate the proximity of an orbit's return close to the tangency with the time it will subsequently spend near $Q$. 
Let $\tilde\alpha_0$ denote the component of $W^s(P)\cap R$ containing $P$. Define a sequence $\{\tilde\alpha_k\}_{k\geq1}$ of compact curves in $W^s(P)\cap R$ inductively as follows.
Let $\tilde\alpha_0=\alpha_1^+$ and $\tilde\alpha_0=\alpha_1^+$.
Given $\tilde\alpha_{k-1}$, $k>1$, define $\tilde\alpha_k$ to be the one of the two components of $R\cap f^{-1}\tilde\alpha_{k-1}$ which lies at the left of $\zeta_0$.
%For each $n\ge 0$ the set $f^{-2}\tilde\alpha_n\cap R$ consists of four curves, two of them at the left of $\zeta_0$ and two at the right. Let $\alpha_{n+1}^-$ denote the one to the left of $\zeta_0$ which is not $\tilde\alpha_{n+2}$. Among the two at the right of $\zeta_0$, let $\alpha_{n+1}^+$ denote the one which is at the left of the other.
The curves obey the following diagram
$$%\{\alpha_{n+1}^-,\alpha_{n+1}^+\}\stackrel{f^2}{\to}
\tilde\alpha_k
\stackrel{f}{\to}\tilde\alpha_{k-1}
\stackrel{f}{\to}\tilde\alpha_{k-2}\stackrel{f}{\to}\cdots
\stackrel{f}{\to}\tilde\alpha_1=\alpha_1^-\stackrel{f}{\to}
\tilde\alpha_0=\alpha_1^+.$$

\begin{figure}
\begin{center}
\includegraphics[height=4cm,width=8.3cm]{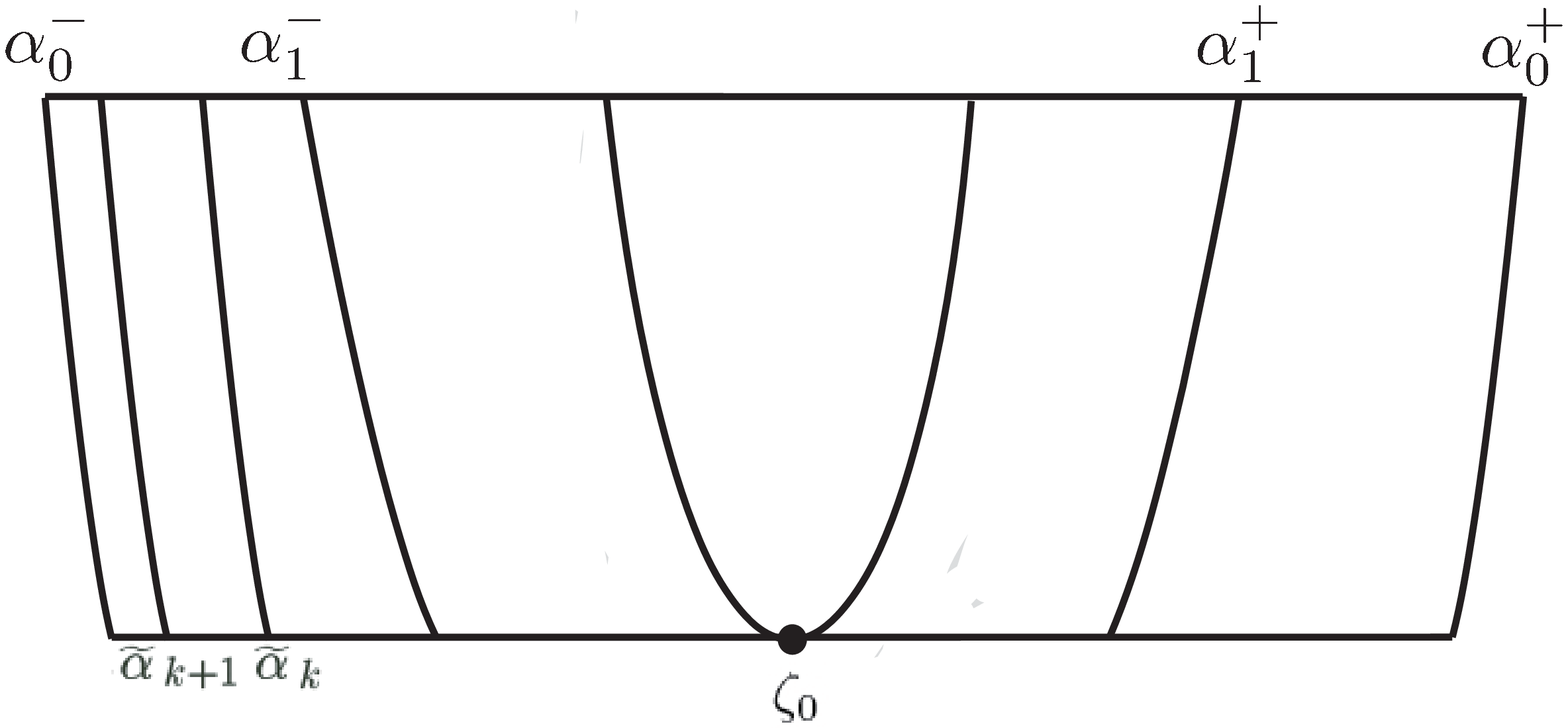}
\caption{The curves $\{\tilde\alpha_k\}$}
\end{center}
\end{figure}

For each $k\geq1$, let $\tilde V_k$ denote the rectangle containing $Q$
which is bordered by $\tilde\alpha_{k}$ and $\partial R$ (see Sect.\ref{family} for the definitions of $\tilde\alpha_{k}$). 
Let $M>0$ be a large integer, and define $V_k=V_{k,M}$ by
$$V_k=\bigcup_{i=0}^{Mk}f^i\tilde V_{2Mk}.$$
Observe that $\{V_k\}$ is a nested sequence, and $\bigcap_{k=1}^{\infty} V_k=\alpha_0^-.$

Fix a partition of unity
$\{\rho_{0,k},\rho_{1,k}\}$ on $R$ such that 
$${\rm supp}(\rho_{0,k})=\overline{\{x\in R\colon \rho_{0,k}(x)\neq0\}}\subset
V_k\ \ \text{ and }\ \ {\rm supp}(\rho_{1,k})\subset R\setminus
\overline{V_{2k}}.$$

We argue with subdivision into two cases.
\medskip

\noindent{\it Case I: $u=0$.} The desired inequalities are direct consequences of the next
\begin{lemma}\label{lyap}
If $\{\mu_n\}_n\subset\mathcal M(f)$, $\mu_n\to\mu$ and $\mu\{Q\}=0$,
then $\lambda^u(\mu_n)\to\lambda^u(\mu).$
\end{lemma}
\begin{proof}
Set $\overline
L=\displaystyle{\varlimsup_{n\to\infty}}\lambda^u(\mu_n)$ and
$\underline L=\displaystyle{\varliminf_{n\to\infty}}\lambda^u(\mu_n)$. Taking subsequences if necessary we may
assume $\overline L=\displaystyle{\lim_{n\to\infty}}\lambda^u(\mu_n)$. 
Since $\lambda^u(\mu_n)=\mu_n(\rho_{0,k}\log J^u)
+\mu_n(\rho_{1,k}\log J^u)$ and
$\rho_{1,k}\log J^u$ is
continuous by Proposition \ref{measurable},
the limit $\displaystyle{\lim_{n\to\infty}}\mu_n(\rho_{1,k}\log J^u)$ exists.
Hence, for every $k$,
\begin{equation}\label{limit}
\overline{L}=\lim_{n\to\infty}\mu_n(\rho_{0,k}\log J^u)
+\lim_{n\to\infty}\mu_n(\rho_{1,k}\log J^u).\end{equation}

Since $\mu\{Q\}=0$ and $\mu\in\mathcal M(f)$ we have $\mu(\partial V_k)=0$, and
thus $\displaystyle{\lim_{n\to\infty}}\mu_n(V_k)=\mu(V_k)$. 
Then $$\lim_{n\to\infty}\mu_n(\rho_{0,k}\log J^u)\leq
\log 5\cdot\lim_{n\to\infty}\mu_n(V_k)=\log5\cdot\mu(V_k).$$
We also have 
$\displaystyle{\lim_{k\to\infty}}\mu(V_k)=0$, and thus
the first term of the right-hand-side of (\ref{limit}) goes to $0$
as $k\to\infty$.
The weak convergence gives
$$
\lim_{n\to\infty}\mu_n(\rho_{1,k}\log J^u)= \mu(\rho_{1,k}\log
J^u).$$ From the Dominated Convergence Theorem, the second term
of the right-hand-side of (\ref{limit}) goes to $\lambda^u(\mu)$
as $k\to\infty.$ Hence we obtain $\overline L=\lambda^u(\mu)$.
The same reasoning gives $\underline L=\lambda^u(\mu)$. 
\end{proof}
\noindent{\it Case II: $u\neq0$.}
The next lemma allows us to estimate contributions of the iterates near the saddle $Q$ to the unstable Lyapunov exponents.
\begin{lemma}\label{Vk}
There exist large integers $M_0$, $k_0$ such that the following holds for all $M\geq M_0$ and $k\geq k_0$: if $z\in \Omega$, $m>0$ are such that $f^{-1}z\notin V_{k,M}$, $z,fz,\ldots,
f^{m-1}z\in V_{k,M},
f^{m}z\notin V_{k,M}$, then
\begin{equation*} \frac{1}{2}\log(4-\varepsilon)\leq\frac{1}{m}\sum_{i=0}^{m-1}\log J^u(f^iz)
\leq \lambda^u(\delta_Q).\end{equation*}
\end{lemma}

\begin{proof}
Let
$z\in \Omega$, $m>0$ be as in the statement.
We have $z\notin f^i\tilde V_{2Mk}$ for every $0<i\leq Mk$, for otherwise $f^{-1}z\in V_{k,M}$. Since $z\in V_{k,M}$, we have $z\in\tilde V_{2Mk}$. Hence
\begin{equation}\label{Vkeq2}m-1\geq Mk \ \text{ and } \ f^{-2}z\in I(\delta).\end{equation}
where the latter holds provided $k_0$ is chosen sufficiently large.

Set $y=f^{-2}z$. By Lemma \ref{tangent} there exist a critical point $\zeta$
and a $C^2(b)$-curve which contains $\zeta$, $y$, and is tangent to both $E^u(\zeta)$ and $E^u(y)$.
Let $p=p(y)$ denote the corresponding bound period.
%We first consider the case where $Df^{m+j}v(y)$ is free. Proposition \ref{recovery} gives $\|Df^{p}v(y)\|\geq\lambda^{\frac{p}{3}}$. If $k$ is sufficiently la%rge, then $f^py,f^{p+1}y,\ldots,f^{m+j-1}y$ stay close to $Q$, and thus $\|Df^{%m+j}v(y)\|\geq\lambda^{m+j-p}\|Df^pv(y)\|$. Hence we obtain
%$\|Df^{m+j}v(y)\|\geq\lambda^{\frac{m}{3}}$. Using this and (\ref{Vkeq2}),
%$$\frac{\|Df^{m+j}v(y)\|}{\|Df^jv(y)\|}\geq\lambda^{\frac{m}{3}}5^{-j}
%\geq \lambda_1^{\frac{m}{3}}5^{-\frac{m}{100}}\geq\lambda^{\frac{m}{4}}.$$
%\begin{sublemma}
%$f^2y=f^{-j+2}z\in\tilde V_{100k}\setminus\tilde V_{102k}.$
%\end{sublemma}
%\begin{proof}
%Suppose $f^2y=f^{-j+2}z\in\tilde V_{101k}.$
%Then $f^mz\in\tilde V_{101k-m-j+2}$, and thus $f^mz\in\tilde V_k$.
%On the other hand we have $f^mz\notin\tilde V_k$. This yields a contradiction.
%\end{proof}

In the sequel we argue as in the proof of Proposition \ref{recovery}.
Fix a $C^2(b)$-curve $\gamma$ which connects $fy$ and $\mathcal F^s(f\zeta)$. 
Similarly to the proof of \eqref{square0} we have ${\rm length}(\gamma)\approx|\zeta-y|^2$. Since
$f^i\gamma$ $(i=0,1,\ldots,m+1)$ 
are $C^2(b)$-curves located near the stable sides of $R$, and $f^{m+1}y=f^{m-1}z\in V_k$, $f^{m+2}y=f^mz\notin V_k$, there exists $C\geq 1$ such that 
\begin{equation}\label{Vkeq-1}C^{-1}{\lambda_2}^{-k}\leq {\rm length}(f^{m+1}\gamma)\leq C {\lambda_1}^{-k+1}.\end{equation} The bounded distortion gives 
\begin{equation}\label{Vkeq-2} |\zeta-y|^2\|w_{m+2}(\zeta)\|\approx{\rm length}(f^{m+1}\gamma).\end{equation}  
From \eqref{squaredistance} \eqref{squaredistance2}, there exists $C\geq1$ such that
\begin{equation}\label{Vkeq-3}
C^{-1}(\lambda_2+\varepsilon/2)^{-\frac{p}{2}}\leq |\zeta-y|\leq C\lambda_1^{-\frac{p}{2}}.\end{equation}

Let $v(y)$ denote any vector which spans $E^u(y)$.
 Split
$$Dfv(y)=A\cdot \left(\begin{smallmatrix}1\\0\end{smallmatrix}\right)+B\cdot e^s(fy).$$
Using \eqref{Vkeq-1} \eqref{Vkeq-2} \eqref{Vkeq-3}, for some $C>0$ we have
$$|A|\cdot\|Df^{m+1}(fy)\left(\begin{smallmatrix}1\\0\end{smallmatrix}\right)\|\approx|\zeta-y|
\cdot\|w_{m+2}(\zeta)\|\geq C{\lambda_2}^{-k}|\zeta-y|^{-1}\geq C{\lambda_2}^{-k}\lambda_1^{\frac{p}{2}}.$$
On the other hand we have
$$|B|\cdot\|Df^{m+1}e^s(fy)\|\leq (Cb)^{m+1}.$$
Since $\|Df^2v(y)\|\leq1$, if
$p\geq m+2$ then using $p>Mk$ which follows from \eqref{Vkeq2} and then choosing sufficiently large $M_0$ if necessary,
we have $$\frac{\|Df^{m+2}v(y)\|}{\|Df^2v(y)\|}\geq C {\lambda_2}^{-k}\lambda_1^{\frac{p}{2}}-(Cb)^{m+1}\geq
(4-\varepsilon)^{\frac{p}{2}}>(4-\varepsilon)^{\frac{m}{2}}.$$

Now, observe that $$\frac{1}{m}\sum_{i=0}^{m-1}\log J^u(f^iz)=\frac{1}{m}\log\frac{\|Df^{m+2}v(y)\|}{\|Df^2v(y)\|}.$$
Hence the first inequality in Lemma \ref{Vk} holds.
In the case $p< m+2$ the first inequality follows from Proposition \ref{recovery}(e)(f) and Lemma \ref{outside}.
The second inequality in the lemma is obvious.
\end{proof}

Returning to the proof of Proposition \ref{lyap2} in the case $u\neq0$, 
choose $M\geq M_0$ and $k\geq k_0$ for which the estimates in Lemma \ref{Vk} hold.
From the Ergodic Theorem one can choose a point $\xi_n\in\Omega$ such that
$$\lim_{m\to\infty}\frac{1}{m}\#\{0\leq i<m\colon f^i\xi_n\in V_{k,M}\}=\mu_n(V_{k,M}).$$
%take a generic point $\xi_n$ with respect to $\mu_n$.
%Approximating the indicator function of $V_{k,M}$ by a continuous bump function we have
%$\displaystyle{\lim_{m\to\infty}}(1/m)\#\{0\leq i<m\colon f^i\xi_n\in V_{k,M}\}
%\geq u-\eta$. 
If $\mu_n\neq\delta_Q$, then the positive orbit of $\xi_n$ is a concatenation of segments in $V_{k,M}$ and those out of $V_{k,M}$.
Lemma \ref{Vk} yields
$$\mu_n(\rho_{0,k}\log J^u)=
\lim_{m\to\infty}\frac{1}{m}\sum_{i=0}^{m-1} \rho_{0,k}\log
J^u(f^i(\xi_n))\geq\mu_n(V_{k,M})\cdot \frac{1}{2}\log(4-\varepsilon).$$
Observe that the same inequality remains to hold in the case $\mu_n=\delta_Q$.
Since
$\displaystyle{\varliminf_{n\to\infty}}\mu_n(V_{k,M})\geq u>0$
we get
$$\varliminf_{n\to\infty}\mu_n(\rho_{0,k}\log J^u)\geq \frac{u}{2}\log(4-\varepsilon).$$

If $u\neq1$, then the weak convergence for the sequence $\{\frac{\mu_n-u\delta_Q}{1-u}\}_n\subset\mathcal M(f)$ implies
$$\lim_{n\to\infty}\mu_n(\rho_{1,k}\log J^u)=(1-u)\nu(\rho_{1,k}\log J^u).$$
The same inequality remains true in the case $u=1$.
Consequently,
\begin{align*}\varliminf_{n\to\infty}\lambda^u(\mu_n)\geq
\varliminf_{n\to\infty}\mu_n(\rho_{0,k}\log
J^u)+\lim_{n\to\infty}\mu_n(\rho_{1,k}\log J^u) \geq  \frac{u}{2}\log(4-\varepsilon)+(1-u)\nu(\rho_{1,k}\log J^u).
\end{align*}
Since $\nu\{Q\}=0$, $\rho_{1,k}\log J^u\to \log J^u$ $\nu$-a.e. as $k\to\infty$. Letting $k\to\infty$ and then using the Dominated Convergence Theorem gives
the first estimate in the proposition. A proof of the second one
is completely analogous, with the second inequality in Lemma \ref{Vk}.
\end{proof}

\subsection{Existence of equilibrium measures for $\varphi_t$}\label{proff}
We now complete the proof of the theorem.

%\begin{proof}

%\begin{proof}
%The
%entropy of $\mu$ is written as a linear combination of the entropies
%of its ergodic components. To show the same property for the unstable
%Lyapunov exponents,
%let $\mu\in\mathcal M(f)$, and let $\mu=\int_{\nu\in\mathcal M^e(f)}
%g(\nu)d\tau(\nu)$ denote the ergodic decomposition of $\mu$. We have
%\begin{align*}\lambda^u(\mu)=\lim_{k\to\infty}\mu(\rho_{1,k}\log J^u)
%&=\lim_{k\to\infty}\int_{\nu\in\mathcal
%M^e(f)}g(\nu)\left(\int\rho_{1,k}\log J^ud\nu\right)d\tau(\nu)\\
%&= \int_{\nu\in\mathcal M^e(f)}g(\nu)\lambda^u(\nu)d\tau(\nu).\end{align*} Here, the first equality follows from the Dominated Convergence Theorem,
%and the second one follows from the continuity of $\rho_{1,k}\log J^u$.
%The last one
%from the Dominated Convergence Theorem applied to the sequence of
%functions $\nu\in\mathcal M^e(f)\to g(\nu)\int\rho_{1,k}\log J^ud\nu$.
%It follows that there exists an ergodic
%component $\mu'$ of $\mu$ such that $F_{\varphi_t}(\mu')=F_{\varphi_t}(\mu)$. This implies
%the desired equality.
%\end{proof}

\begin{proof}[Proof of the theorem]
By the ergodic decomposition theorem \cite{Man87},
the unstable Lyapunov exponent of $\mu$ is written as a linear combination of the 
unstable Lyapunov exponents of its ergodic components. 
Since the same property holds for entropies and $\mathcal M(f)$ is compact,
one can choose 
a convergent sequence $\{\mu_n\}\subset\mathcal M^e(f)$
such that $F_{\varphi_t}(\mu_n)>P(t)-1/n$.
Let $\mu\in\mathcal M(f)$ denote the limit point.
In the case $t\leq0$, the upper semi-continuity of entropy and Proposition \ref{lyap2} yield $P(t)=\displaystyle{\lim_{n\to\infty}}F_{\varphi_t}(\mu_n)\le F_{\varphi_t}(\mu)$. Namely $\mu$ is an equilibrium measure for $\varphi_t$.

We now consider the case $t>0$.
Write $\mu=u\delta_Q+(1-u)\nu$ where $0\leq u\leq 1$, $\nu\in\mathcal M(f)$ and $\nu\{Q\}
=0$. The upper semi-continuity
of entropy gives
$$P(t)
=\lim_{n\to\infty}F_{\varphi_t}(\mu_n)\leq
h(\mu)-t\varliminf_{n\to\infty}\lambda^u(\mu_n).$$
If $u=1$ then $\mu=\delta_Q$ and thus $h(\mu)=0$.
Proposition \ref{lyap2} gives
$P(t)
\leq -(t/2)\log (4-\varepsilon)$
and a contradiction arises because 
$P(t)>-(t/2)\log(4-\varepsilon)$ from \eqref{t0} and $t<t_0$.
Hence $u\neq1$ holds. If $u\neq0$, then
using Proposition \ref{lyap2} and $h(\mu)=(1-u)h(\nu)$ we have
\begin{align*}
P(t)&\leq h(\mu)-t\left(\frac{u}{2}\log(4-\varepsilon)+(1-u)\lambda^u(\nu) \right)\\
&=(1-u)F_{\varphi_t}(\nu)- \frac{tu}{2}\log(4-\varepsilon) <
(1-u)F_{\varphi_t}(\nu)+uP(t).
\end{align*}
Rearranging this gives $(1-u)P(t)< (1-u) F_{\varphi_t}(\nu)$, and thus
$P(t)<F_{\varphi_t}(\nu)$, a contradiction. Hence $u=0$, and
$P(t)\leq F_{\varphi_t}(\nu)= F_{\varphi_t}(\mu)$. Namely $\nu$ is an equilibrium measure for $\varphi_t$.
\end{proof}

\section*{Appendix: on the size of $t_0$.}
 Since the topological entropy of $f$ is $\log2$, the Variational Principle shows
$P(0)=\log2$. By Ruelle's inequality \cite{Rue78}, $P(1)\leq0$. Since $f$ has no SRB measure \cite{Tak12}, $P(1)<0$.
Hence, there equation $P(t)=0$ has the unique solution in $(0,1)$, which is denoted by $t^u$.
Observe that $t^u<t_0$.
From the next lemma and the fact that $t^u\to1$ as $b\to0$ \cite[Theorem B]{SenTak12} it follows that
$t_0$ can be made arbitrarily large by choosing sufficiently small $\varepsilon$ and $b$.

\begin{lemma}\label{t0size}
$t_0\geq\frac{\log2}{(1/t^u)\log2-(1/2)\log(4-\varepsilon)}$.

\end{lemma}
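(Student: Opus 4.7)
The proof rests on the convexity of $P$. Since
\[
P(t)=\sup_{\mu\in\mathcal M(f)}\bigl(h(\mu)-t\lambda^u(\mu)\bigr)
\]
is a supremum of affine-in-$t$ functions, $P$ is convex (hence continuous) on $\mathbb R$. The two anchor values $P(0)=\log 2$ (variational principle, since $h_{\rm top}(f)=\log 2$) and $P(t^u)=0$ (definition of $t^u$) are what we will play against each other.

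Comparing the slope of the chord from $(0,\log 2)$ to $(t^u,0)$, which equals $-\log 2/t^u$, with the slope of the chord from $(t^u,0)$ to $(t,P(t))$ for $t\geq t^u$, convexity yields
\[
P(t)\geq \log 2\Bigl(1-\frac{t}{t^u}\Bigr)\qquad\text{for every } t\geq t^u.
\]
We may assume $t_0<+\infty$, for otherwise the claim is vacuous. Using the noted $t_0\geq t^u$, the above bound applies at $t=t_0$, and continuity of $P$ together with the definition of $t_0$ forces $P(t_0)=-(t_0/2)\log(4-\varepsilon)$. Substituting gives
\[
\log 2\Bigl(1-\frac{t_0}{t^u}\Bigr)\leq -\frac{t_0}{2}\log(4-\varepsilon),
\]
and rearranging (the coefficient $(1/t^u)\log 2-(1/2)\log(4-\varepsilon)$ is strictly positive because $t^u<1$ forces $(2/t^u)\log 2>2\log 2>\log(4-\varepsilon)$) yields the claimed lower bound.

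The only mildly delicate ingredient is the observation $t_0\geq t^u$. This is obtained as follows: Theorem~A applies at $t=t^u$ since $P(t^u)=0>-(t^u/2)\log(4-\varepsilon)$, and produces an equilibrium measure $\mu_{t^u}$ with $h(\mu_{t^u})=t^u\lambda^u(\mu_{t^u})$. Ruelle's inequality gives $\lambda^u(\mu_{t^u})\geq h(\mu_{t^u})=t^u\lambda^u(\mu_{t^u})$, so $\lambda^u(\mu_{t^u})\geq 0$ since $t^u<1$. For every $t\in[0,t^u]$ the variational principle then yields
\[
P(t)\geq h(\mu_{t^u})-t\lambda^u(\mu_{t^u})=(t^u-t)\lambda^u(\mu_{t^u})\geq 0>-\frac{t}{2}\log(4-\varepsilon),
\]
ruling out $t_0<t^u$. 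Once this is in hand, the remainder of the argument is a routine convexity-plus-continuity manipulation.
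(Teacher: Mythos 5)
Your core argument is exactly the paper's: convexity of $P$ places its graph on or above the chord through $(0,\log 2)$ and $(t^u,0)$ for $t\geq t^u$, and comparing this lower bound with the line $t\mapsto -(t/2)\log(4-\varepsilon)$ at $t=t_0$ gives the claimed bound. Your added bookkeeping — handling $t_0=+\infty$, the identity $P(t_0)=-(t_0/2)\log(4-\varepsilon)$ via continuity, and checking the sign of the denominator — is all correct.

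Where you run into trouble is the auxiliary inequality $t_0\geq t^u$. As written, the justification is circular: Theorem~A's stated hypothesis is $t<t_0$, so invoking it at $t=t^u$ presupposes precisely $t^u<t_0$, and the condition $P(t^u)>-(t^u/2)\log(4-\varepsilon)$ does not by itself imply $t^u<t_0$ (the infimum defining $t_0$ could a priori be attained below $t^u$). It is true that the \emph{proof} of Theorem~A only exploits $P(t)>-(t/2)\log(4-\varepsilon)$, so your argument is salvageable, but one should say so; also, you only treat $t\in[0,t^u]$ and silently skip $t<0$. A cleaner non-circular route, which is presumably why the paper simply ``observes'' $t^u<t_0$: every $\mu\in\mathcal M(f)$ has $\lambda^u(\mu)>0$ (immediate from \eqref{eu} via Birkhoff and ergodic decomposition), so $P$ is nonincreasing and $P(t)\geq P(t^u)=0>-(t/2)\log(4-\varepsilon)$ for $0<t\leq t^u$; for $t\leq 0$, $P(t)\geq -t\lambda^u(\delta_Q)\geq -t\log(4-\varepsilon/2)>-(t/2)\log(4-\varepsilon)$ using \eqref{misiurewicz}. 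Hence no $t\leq t^u$ lies in the set defining $t_0$, and the convexity argument you and the paper both use finishes the proof.
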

\begin{proof}
Consider the pressure function $t\in\mathbb R\mapsto P(t)$ and its graph.
The two points $(0,\log2)$ and $(t^u,0)$ lie on the graph.
Since the graph is concave up, $\{(t,P(t))\colon t>t^u\}$ lies the above
of the straight line through the two points. In other words, $P(t)>-(1/t^u)(t-t^u)\log2+\log2$.
A direct computation shows that $-(1/t^u)(t-t^u)\log2+\log2>-(t/2)\log(4-\varepsilon)$ provided
$t<\frac{\log2}{(1/t^u)\log2-(1/2)\log(4-\varepsilon)}$.
\end{proof}

\subsection*{Acknowledgments}
We thank anonymous referees for useful comments. 
S. S.  is partially supported by the CNPq Brazil. 
H. T. is partially supported by the Grant-in-Aid for Young Scientists (B) of the JSPS, Grant No.23740121. This research is partially supported by the Kyoto University Global COE Program.
We thank Renaud Leplaideur, Isabel Rios, Paulo Varandas, and Michiko Yuri for fruitful discussions.

\end{document}